\def\eqnarray{\stepcounter{equation}\let\@currentlabel=\theequation
\global\@eqnswtrue
\tabskip\@centering\let\\=\@eqncr
$$\halign to \displaywidth\bgroup\hfil\global\@eqcnt\z@
  $\displaystyle\tabskip\z@{##}$&\global\@eqcnt\@ne
  \hfil$\displaystyle{{}##{}}$\hfil
  &\global\@eqcnt\tw@ $\displaystyle{##}$\hfil
  \tabskip\@centering&\llap{##}\tabskip\z@\cr}
\def\endeqnarray{\@@eqncr\egroup
      \global\advance\c@equation\m@ne$$\global\@ignoretrue}
\def\@yeqncr{\@ifnextchar [{\@xeqncr}{\@xeqncr[1pt]}}
\DeclareMathAlphabet\gothic{U}{euf}{m}{n}
\newcommand{\gota}{\gothic{a}}
\newcommand{\gotb}{\gothic{b}}
\newcommand{\graph}{{\mathop{\rm graph}}}
\newcommand{\RRe}{\mathop{\rm Re}}
\newcounter{teller}
\newenvironment{tabel}{\begin{list}%
{\rm  (\alph{teller})\hfill}{\usecounter{teller} \leftmargin=1.1cm
\labelwidth=1.1cm \labelsep=0cm \parsep=0cm}
                      }{\end{list}}
\newcounter{tellerr}
\newenvironment{tabeleq}{\begin{list}%
{\rm  (\roman{tellerr})\hfill}{\usecounter{tellerr} \leftmargin=1.1cm
\labelwidth=1.1cm \labelsep=0cm \parsep=0cm}
                         }{\end{list}}
\newtheorem{theorem}{Theorem}[section]
\newtheorem{proposition}[theorem]{Proposition}
\newtheorem{lemma}[theorem]{Lemma}
\newtheorem{corollary}[theorem]{Corollary}
\theoremstyle{definition}
\newtheorem{example}[theorem]{Example}
\newtheorem{definition}[theorem]{Definition}
\newtheorem{remark}[theorem]{Remark}
\newcommand\emb{\stackrel{\mathclap{\normalfont d}}{\hookrightarrow}}
\newcommand{\loc}{{\rm loc}}
\newcommand{\one}{\mathds{1}}
\def\RR{\mathbb{R}}
\def\NN{\mathbb{N}}
\def\CC{\mathbb{C}}
\def\pOm{\partial\Omega}
\def\supp{\operatorname{supp}}
\numberwithin{equation}{section}
\keywords{Fractional power, sectorial operator, Caffarelli--Silvestre extension, 
Dirichlet and Neumann problem, maximal regularity of solutions, Dirichlet-to-Neumann operator}
\subjclass[2010]{35R11, 35B65, 47A07}
\begin{document}
\title[Fractional powers of sectorial operators]{Fractional powers of sectorial operators 
via the Dirichlet-to-Neumann operator}
\author{W. Arendt}
\address{Wolfgang Arendt, Institute of Applied Analysis, University of Ulm. Helmholtzstr. 18, D-89069 Ulm (Germany)} 
\email{wolfgang.arendt@uni-ulm.de}

\author{A.F.M. ter Elst}
\address{A.F.M, ter Elst, Department of Mathematics, University of Auckland. Private bag 92019. Auckland 1142 (New Zealand)}
\email{terelst@math.auckland.ac.nz }

\author{M. Warma}
\address{Mahamadi Warma, University of Puerto Rico (Rio Piedras Campus), College of Natural Sciences,
Department of Mathematics, PO Box 70377 San Juan PR
00936-8377 (USA)}
\email{mahamadi.warma1@upr.edu, mjwarma@gmail.com}

\begin{abstract}
In the very influential paper \cite{CS07} Caffarelli and Silvestre studied regularity of $(-\Delta)^s$, $0<s<1$, by identifying fractional powers with a certain Dirichlet-to-Neumann operator. Stinga and Torrea \cite{ST10} and Gal\'e, Miana and Stinga \cite{GMS13} gave several more abstract versions of this extension procedure. The purpose of this paper is to study precise regularity properties of the Dirichlet  and the Neumann problem in Hilbert spaces. Then the Dirichlet-to-Neumann operator becomes an isomorphism between interpolation spaces and its part in the underlying Hilbert space is exactly the fractional power. 
\end{abstract}

\begingroup
\makeatletter
\patchcmd{\@settitle}{\uppercasenonmath\@title}{\large}{}{}
\patchcmd{\@setauthors}{\MakeUppercase}{\large\sc}{}{}
\makeatother
\maketitle
\endgroup

\section{Introduction}

In the very influential article \cite{CS07} Caffarelli and Silvestre study the fractional powers $(-\Delta)^s$, $0<s<1$, on $\RR^N$ of the operator $-\Delta$ by identifying the operator $(-\Delta)^s$ with a Dirichlet-to-Neumann operator with respect to an extension to the upper half-plane.
Subsequently, such extensions have been studied in more abstract settings by Stinga and Torrea \cite{ST10} as well as by Gal\'e, Miana and Stinga \cite{GMS13}.
They obtain in particular a representation formula for the associated Dirichlet problem analogous to the Poisson formula.
We also refer to \cite{CaSt,NOS} and their references for the case of symmetric second-order elliptic operators in divergence form with smooth coefficients on bounded open sets in $\RR^N$ subject to zero Dirichlet  and Neumann boundary conditions on $\pOm$.

Our contribution goes in the same direction.
Instead of Banach spaces and generators of semigroups as in the papers \cite{GMS13,ST10} mentioned above, we concentrate on Hilbert spaces and sectorial operators.
This allows us to obtain precise regularity results and well-posedness of the Dirichlet  and the Neumann problem.
The Dirichlet-to-Neumann operator will be shown to be an isomorphism between two interpolation spaces and its part in the underlying Hilbert space is exactly the fractional power of the given sectorial operator.
In this way, we prove, may be for the first time, uniqueness of the extensions.

To be more specific, we consider a Hilbert space $H$ and a sectorial operator 
$A$ on $H$ which is defined by a continuous, coercive form 
$\mathcal E\colon V\times V\to\CC$, 
where $V$ is a Hilbert space continuously and densely embedded in $H$, that is, $V\emb H$. 
Thus we have the usual Gelfand triple $V\emb H\emb V'$, 
where $V'$ denotes the antidual of the space $V$. 
Moreover, $\langle \mathcal Au,v\rangle_{V',V}=\mathcal E(u,v)$ defines an operator 
$\mathcal A\in\mathcal L(V,V')$, where $\mathcal L(V,V')$ is the space of all linear 
and bounded operators from $V$ to $V'$. 
The part of $\mathcal A$ in $H$ is the operator $A$. 
Given $0<s<1$ we consider the Bessel kind of equation
\begin{align}\label{11}
-u''(t)-\frac{1-2s}{t}u'(t)+\mathcal Au(t)=0,
\quad t\in (0,\infty)
\end{align}
as in the papers mentioned above.
We identify a precise function space and call the functions in the function space which solve  \eqref{11} $s$-harmonic. 
Then we show that for each $x\in [H,V]_s$ there is a unique $s$-harmonic function $u$ satisfying $u(0)=x$. 
Moreover, this function has an $s$-normal derivative $y$ in $[H,V']_s=[V',H]_{1-s}$ 
(see \eqref{44-1} in Section \ref{sec4} for more details). 
Here and throughout the paper, for all $0<\theta<1$ we denote by 
$[H,V]_\theta$ and $[H,V']_\theta = [V',H]_{1-\theta}$ the complex interpolation spaces.
The corresponding Dirichlet-to-Neumann operator $\mathcal D_s$ which 
associates to $x\in [H,V]_s$ the $s$-normal derivative $y$ turns out to be an isomorphism 
from $[H,V]_s$ to $[H,V']_s$. 
The part of the operator $\mathcal D_s$ in $H$ is the multiple $c_s A^s$
of the fractional power $A^s$ of $A$, where $c_s$ is an explicit constant depending only on $s$. 
For the proof we use a new version of the Kato--Lions method to 
associate a generator of a holomorphic semigroup to a sesquilinear form 
as it was established in \cite{AtE}. 
We also use the same representation formula used in \cite{GMS13,ST10}. 
Our proofs, however, are self-contained, using merely a few results of interpolation theory. 

The rest of the paper is structured as follows. 
We start with a short motivation for the result and the methods, 
by considering the square root of a bounded operator (Section \ref{sec2}). 
In Section \ref{sec3} we put together some properties of the  mixed Sobolev spaces 
related to fractions.
The Dirichlet  and Neumann problem is studied in Section \ref{sec4}. 
The main result on the identification of the Dirichlet-to-Neumann map with the 
fractional power in the coercive case is obtained in Section \ref{sec5}. 
In Section \ref{sec6} we drop the condition that $\mathcal E$ is coercive and assume
merely that $\mathcal E$ is sectorial with vertex zero.

\section{Appetizer: the square root of a bounded operator}\label{sec2}

Let $H$ be a Hilbert space over $\CC$ and let $A\in\mathcal L(H):=\mathcal L(H,H)$ be 
{\bf coercive}, i.e.\ there exists an $\alpha \in (0,1]$ such that
\begin{align}\label{e-coe}
\RRe \langle Ax,x\rangle\ge \alpha\|x\|_H^2
\end{align}
for all $x \in H$.
 Then there exists a unique accretive operator, denoted by $A^{\frac 12}$,
such that $(A^{\frac 12})^2=A$, see e.g.\ \cite[Theorem V.3.35]{Kat}.
 This operator $A^{\frac 12}$ can be realized as a Dirichlet-to-Neumann operator in the following way.
We consider the Sobolev space
\begin{align*}
W^{1,2}((0,\infty);H):=\{u\in L^2((0,\infty);H);\; u'\in L^2((0,\infty);H)\}
\end{align*}
and we recall that 
\begin{align*}
W^{1,2}((0,\infty);H)\hookrightarrow C_0([0,\infty);H):=\{u\in C([0,\infty);H):\lim_{t\to\infty}\|u(t)\|_H=0\}.
\end{align*}
Then $W^{2,2}((0,\infty);H)\hookrightarrow C^1([0,\infty);H)$, where
\begin{align*}
W^{2,2}((0,\infty);H):=\{u\in W^{1,2}((0,\infty);H): u'\in W^{1,2}((0,\infty);H)\}.
\end{align*}

We have the following result.

\begin{proposition}
For each $x\in H$ there exists a unique $u\in W^{2,2}((0,\infty);H)$ such that
\begin{equation}\label{21}
\begin{cases}
-u''(t)+Au(t)=0, 
\quad t \in (0,\infty),\\[5pt]
u(0)=x.
\end{cases}
\end{equation}
\end{proposition}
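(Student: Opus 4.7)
The plan is to construct the solution explicitly as $u(t)=e^{-tA^{1/2}}x$, where $A^{1/2}$ denotes the unique accretive square root of $A$, and to prove uniqueness by a standard energy estimate exploiting coercivity~\eqref{e-coe}. Since $A\in\mathcal L(H)$, also $A^{1/2}\in\mathcal L(H)$ and the semigroup $(e^{-tA^{1/2}})_{t\ge 0}$ is uniformly continuous, so differentiating termwise in the power series yields $u'(t)=-A^{1/2}u(t)$ and $u''(t)=Au(t)$. This immediately gives $-u''(t)+Au(t)=0$, and $u(0)=x$ follows from continuity of the semigroup at $0$.

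For the required regularity the key point is the exponential bound $\|e^{-tA^{1/2}}\|\le C\,e^{-\beta t}$ for some $\beta>0$. I would argue that coercivity places the numerical range $W(A)$, and therefore $\sigma(A)$, inside $\{\RRe z\ge\alpha\}$; the principal branch of the square root then places $\sigma(A^{1/2})$ inside $\{\RRe z\ge\sqrt{\alpha}\}$. Picking any $0<\beta<\sqrt{\alpha}$ and a contour $\Gamma$ surrounding $\sigma(A^{1/2})$ inside $\{\RRe z\ge\beta\}$, the Dunford calculus representation
\[
e^{-tA^{1/2}}=\frac{1}{2\pi i}\int_\Gamma e^{-tz}(z-A^{1/2})^{-1}\,dz
\]
yields $\|e^{-tA^{1/2}}\|\le C\,e^{-\beta t}$. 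Combined with boundedness of $A^{1/2}$ and $A$, this gives $u,u',u''\in L^2((0,\infty);H)$, hence $u\in W^{2,2}((0,\infty);H)$.

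For uniqueness, let $v\in W^{2,2}((0,\infty);H)$ be any solution of $-v''+Av=0$ with $v(0)=0$. Pairing the equation with $v$, integrating over $(0,T)$, and integrating the $v''$ term by parts gives
\[
\int_0^T\|v'(t)\|_H^2\,dt+\int_0^T\langle Av(t),v(t)\rangle\,dt=\langle v'(T),v(T)\rangle-\langle v'(0),v(0)\rangle.
\]
The embedding $W^{1,2}((0,\infty);H)\hookrightarrow C_0([0,\infty);H)$ applied to $v$ and $v'$ makes the right-hand side tend to $0$ as $T\to\infty$, and the boundary term at $0$ vanishes by $v(0)=0$. Taking real parts and applying~\eqref{e-coe} forces
\[
\int_0^\infty\|v'(t)\|_H^2\,dt+\alpha\int_0^\infty\|v(t)\|_H^2\,dt\le 0,
\]
so $v\equiv 0$, completing uniqueness.

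I expect the main obstacle to be establishing the exponential decay $\|e^{-tA^{1/2}}\|\le C\,e^{-\beta t}$: although $A^{1/2}$ is bounded, for a non-self-adjoint (and not even normal) operator one cannot read off a decay rate directly from the spectral bound, and one has to argue through a Dunford contour inside the right half-plane to make the estimate quantitative. Once that bound is in hand, both existence and uniqueness follow with minimal additional effort, and the candidate $u(t)=e^{-tA^{1/2}}x$ is evidently the only solution.
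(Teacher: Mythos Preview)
Your proof is correct, but your existence argument takes a genuinely different route from the paper's. The paper proceeds variationally: it introduces the continuous coercive form
\[
\gotb(u,v)=\int_0^\infty\Big(\langle u'(t),v'(t)\rangle_H+\langle Au(t),v(t)\rangle_H\Big)\,dt
\]
on $W^{1,2}((0,\infty);H)$, picks any $\phi\in W^{2,2}((0,\infty);H)$ with $\phi(0)=x$, and applies Lax--Milgram on $W_0^{1,2}((0,\infty);H)$ to correct $\phi$ by an element $w$ so that $u=\phi-w$ is a weak solution; since $Au\in L^2$ one then reads off $u\in W^{2,2}$. Your approach instead writes down the solution $u(t)=e^{-tA^{1/2}}x$ explicitly and uses a Dunford contour to get the exponential decay needed for $L^2$-membership. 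For uniqueness both arguments coincide: integration by parts followed by coercivity~\eqref{e-coe}.

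What each buys: your construction is shorter and gives the solution formula directly, but it presupposes the square root $A^{1/2}$, whereas the whole point of this ``appetizer'' section is to \emph{produce} $A^{1/2}$ as the Dirichlet-to-Neumann map $x\mapsto -u'(0)$; invoking $A^{1/2}$ up front makes the subsequent theorem $\mathcal D=A^{1/2}$ a tautology rather than a result. The paper's variational argument avoids this circularity and, more importantly, is the template that generalizes to unbounded $A$ and to the weighted Bessel-type equation for arbitrary $s\in(0,1)$ in the later sections, where no explicit semigroup formula is available a priori. One minor remark on your argument: you should note that the accretive square root from Kato's theorem agrees with the Dunford-calculus square root via the principal branch (or simply work with the latter throughout), since it is the spectral mapping property of the holomorphic functional calculus that gives $\sigma(A^{1/2})\subset\{\RRe z\ge\sqrt{\alpha}\}$ and hence the contour estimate.
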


\begin{proof}
Define the sesquilinear form $\gotb\colon W^{1,2}((0,\infty);H)\times W^{1,2}((0,\infty);H)\to\CC$ by
\begin{align*}
\gotb(u,v)=\int_0^\infty\Big(\langle u'(t),v'(t)\rangle_H+\langle Au(t),v(t)\rangle_H\Big)\,dt.
\end{align*}
Then $\gotb$ is continuous and from \eqref{e-coe} we have that
\begin{align}\label{e22}
\RRe \gotb(u,u)
\ge\int_0^\infty\left(\|u'(t)\|_H^2+\alpha\|u(t)\|_H^2\right)\,dt\ge \alpha \|u\|_{W^{1,2}((0,\infty);H)}^2.
\end{align}
So $\gotb$ is coercive.
Next we show existence.
Let $x\in H$.
There exists a function 
$\phi\in W^{2,2}((0,\infty);H)$ such that $\phi(0)=x$.
By the Lax--Milgram Lemma there exists a unique $w\in W_0^{1,2}((0,\infty);H)$ such that $\gotb(w,v)=\gotb(\phi,v)$ for all $v\in W_0^{1,2}((0,\infty);H)$, where
\begin{align*}
W_0^{1,2}((0,\infty);H):=\{v\in W^{1,2}((0,\infty);H): v(0)=0\}.
\end{align*}
 Let $u:=\phi-w$.
Then $u\in W^{1,2}((0,\infty);H)$ and
\[
\int_0^\infty \Big( \langle u'(t),v'(t)\rangle_H+\langle Au(t),v(t)\rangle_H \Big) \,dt=0
\]
for all $v\in W_0^{1,2}((0,\infty);H)$.
This implies that $u''=Au$ weakly.
Since $Au\in L^2((0,\infty);H)$, one has that $u\in W^{2,2}((0,\infty);H)$ and so $u$ is a solution of \eqref{21}.

To show uniqueness, let $u\in W^{2,2}((0,\infty);H)$ be a solution of \eqref{21} such that $u(0)=0$. 
Then \eqref{e22} gives
\begin{align*}
0=\int_0^\infty \Big(\langle-u''(t),u(t)\rangle_H+\langle Au(t),u(t)\rangle_H \Big)\,dt
\ge \alpha\|u\|_{W^{1,2}((0,\infty);H)}^2.
\end{align*}
Hence $u=0$.
\end{proof}

Now we define the {\bf Dirichlet-to-Neumann operator} $\mathcal D\colon H\to H$ as follows.
Let $x\in H$.
Let $u\in W^{2,2}((0,\infty);H)$ be the unique solution of \eqref{21}.
Define $\mathcal D x:=-u'(0)$.
Then the following result holds.

\begin{theorem}
We have that $\mathcal D=A^{\frac 12}$.
\end{theorem}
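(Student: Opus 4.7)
The plan is to exhibit the unique solution of \eqref{21} explicitly in terms of $A^{1/2}$ and then read off $-u'(0)$.

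Since $A\in\mathcal L(H)$ is coercive, its numerical range lies in $\{z\in\CC:\RRe z\ge\alpha\}$, hence $\sigma(A)\subset\{z:\RRe z\ge\alpha,\ |z|\le\|A\|\}$. The accretive square root $A^{1/2}$ defined in \cite{Kat} can then be obtained by a bounded holomorphic functional calculus on a sector containing $\sigma(A)$, and its spectrum is $\{\sqrt z:z\in\sigma(A)\}$ with the principal branch. A direct estimate $\RRe\sqrt z=\sqrt{(|z|+\RRe z)/2}\ge\sqrt\alpha$ shows that $\sigma(-A^{1/2})\subset\{z:\RRe z\le-\sqrt\alpha\}$. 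Consequently $-A^{1/2}$ generates a uniformly continuous, exponentially decaying semigroup $T(t):=e^{-tA^{1/2}}$, and there exist constants $M\ge1$ and $\omega>0$ with $\|T(t)\|_{\mathcal L(H)}\le Me^{-\omega t}$ for all $t\ge 0$.

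First I would define, for a given $x\in H$, the candidate
\[
u(t):=T(t)x=e^{-tA^{1/2}}x,\qquad t\ge 0.
\]
Since $A^{1/2}\in\mathcal L(H)$, the map $t\mapsto T(t)x$ is $C^\infty$ with $u'(t)=-A^{1/2}T(t)x$ and $u''(t)=AT(t)x=Au(t)$, so $u$ satisfies $-u''+Au=0$ pointwise on $(0,\infty)$. The exponential estimate on $T(t)$, together with boundedness of $A^{1/2}$ and $A$, gives that $u$, $u'$ and $u''$ all lie in $L^2((0,\infty);H)$; hence $u\in W^{2,2}((0,\infty);H)$. Moreover $u(0)=x$, so $u$ is a solution of \eqref{21}.

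By the uniqueness part of the preceding proposition, $u$ is the only element of $W^{2,2}((0,\infty);H)$ solving \eqref{21}, so by definition of $\mathcal D$,
\[
\mathcal D x=-u'(0)=A^{1/2}T(0)x=A^{1/2}x,
\]
and since $x\in H$ was arbitrary, $\mathcal D=A^{1/2}$.

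The only nontrivial step is the construction of the semigroup $T(t)$ with exponential decay; I expect this to be the main obstacle, but it reduces to the spectral bound $\RRe\sigma(A^{1/2})\ge\sqrt\alpha$, which follows from coercivity of $A$ via the numerical-range inclusion and the elementary identity for the real part of the principal square root. Everything else is routine differentiation of an operator-valued exponential and invocation of the uniqueness result already proved.
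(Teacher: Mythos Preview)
Your proof is correct, but it follows a genuinely different route from the paper's.

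The paper never writes down the solution explicitly. Instead it argues in two steps: first, by differentiating the equation $-u''+Au=0$ it shows that $w:=u'$ is again a solution (with initial value $u'(0)$), from which one reads off $\mathcal D^2=A$; second, an integration by parts gives $\RRe\langle\mathcal D x,x\rangle_H=\int_0^\infty(\|u'\|_H^2+\RRe\langle Au,u\rangle_H)\,dt\ge0$, so $\mathcal D$ is accretive. Uniqueness of the accretive square root (\cite[Theorem~V.3.35]{Kat}) then forces $\mathcal D=A^{1/2}$.

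Your approach instead exhibits the solution as $u(t)=e^{-tA^{1/2}}x$ and reads off $-u'(0)$ directly. This is cleaner and more constructive, and it bypasses the need to verify accretivity of $\mathcal D$ separately. The price is that you invoke more spectral machinery: the holomorphic functional calculus, the spectral mapping theorem for the square root, and the fact that for a bounded generator the growth bound of the semigroup equals its spectral bound (this last point is what justifies your exponential decay $\|T(t)\|\le Me^{-\omega t}$ and deserves an explicit reference or one-line justification). The paper's argument, by contrast, uses only the \emph{definition} of the accretive square root and an integration by parts, and is chosen deliberately: the remark immediately following the proof singles out ``differentiating the differential equation'' as the crucial step that later fails for unbounded~$A$, motivating the weighted-Sobolev machinery of the subsequent sections. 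Your argument does not set up that narrative, but as a self-contained proof of the theorem it is perfectly valid.
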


\begin{proof}
We first show that $\mathcal D^2=A$. 
Let $x\in H$ and $u\in W^{2,2}((0,\infty);H)$ be such that $u(0)=x$ and $-u''(t)+Au(t)=0$ 
for all $t\in (0,\infty)$.
Let $w:=u'$.
Then $-w''(t)+Aw(t)=0$ weakly for all $t\in (0,\infty)$.
This shows that $w\in W^{2,2}((0,\infty);H)$ and $w$ is a solution of \eqref{21} with $w(0)=u'(0)$. 
Then $-w'(0)=\mathcal Du'(0)=\mathcal D(-\mathcal Dx)$.
Moreover, $u\in W^{3,2}((0,\infty);H)$ and $Ax=Au(0)=u''(0)=w'(0)=\mathcal D^2x$.

Next we show that $\mathcal D$ is accretive. 
Let $x\in H$ and let $u$ be the unique solution of \eqref{21}. Then
\begin{eqnarray*}
\RRe\langle \mathcal Dx,x\rangle_H
&=&\RRe\langle -u'(0),u(0)\rangle_H
=\RRe\int_0^\infty\frac{d}{dt}\langle u'(t),u(t)\rangle_H\,dt\\
&=&\RRe\int_0^\infty \Big( \langle u'(t),u'(t)\rangle_H+\langle u''(t),u(t)\rangle_H \Big)\,dt\\
&=&\int_0^\infty\Big(\|u'(t)\|_H^2+\RRe\langle Au(t),u(t)\rangle_H\Big)\,dt\\
&\ge &0.
\end{eqnarray*}
Hence $\mathcal D$ is accretive.
\end{proof}

The crucial argument in the proof above is to differentiate the differential equation $-u''+Au=0$.
This is possible since the operator $A$ is bounded.
For unbounded operators different arguments are needed.
For fractional powers other than squares, weighted Sobolev spaces are needed.
They are introduced in the next section.

\section{Sobolev spaces}\label{sec3}

The Dirichlet and the Neumann problems we have in mind are well posed in mixed Sobolev spaces which are known from interpolation theory.
We give the definition, cite results we shall need and prove an integration by parts formula.

Let $X$ be a Hilbert space.
We will consider spaces of integrable functions on $(0,\infty)$ with values in $X$. 
Derivatives will be taken in the {\bf distributional sense}; i.e.\ using the elements of the 
scalar space $C_c^\infty((0,\infty))$ of all infinitely differentiable $\CC$-valued 
functions with compact support as test functions.
Here is the precise definition.

\begin{definition}\label{def-31}
\mbox{}
\begin{tabel}
\item 
Let $u,v\in L_{\loc}^1(X):=L_{\loc}^1((0,\infty);X)$.
We say that $v$ is the {\bf weak derivative} of $u$ if
\begin{align*}
-\int_0^\infty\varphi'(t)u(t)\;dt=\int_0^\infty\varphi(t)v(t)\,dt
\end{align*}
for all $\varphi\in C_c^\infty((0,\infty))$.
In that case we write $u':=v$.

\item Let $E$ be a subspace of $L_{\loc}^1(X)$ and let $u\in L_{\loc}^1(X)$. 
We say that $u' \in E$ if there exists a $v\in E$ such that $v$ is the weak derivative of $u$. 
\end{tabel}
\end{definition}

The weak derivative is unique if it exists and for all 
$u\in C^1((0,\infty);X)$ the weak and classical derivatives coincide.

Let $X$, $Y$ be Hilbert spaces such that $Y\emb X$.
This means that $Y$ is a dense subspace of $X$ and the injection of $Y$ into $X$ is continuous. 
Fix $0<s<1$.
We define the space
\begin{eqnarray*}
W_s(X,Y):=\{u\in L_{\loc}^1(Y): u'\in L_{\loc}^1(X), & \; & 
    \Big( t\mapsto t^su(t) \Big)\in L_2^\star(Y)\mbox{ and } 
\\*
& &  \Big( t\mapsto t^su'(t) \Big) \in L_2^\star(X)\}
,
\end{eqnarray*}
where for $Z=X$ or $Z=Y$,
\begin{align*}
L_2^\star(Z):=L^2\Big(Z,\frac{dt}{t}\Big)=L^2\left((0,\infty);Z,\frac{dt}{t}\right).
\end{align*}
In order to avoid clutter we write $t^s$ for the function
$t \mapsto t^s$.
It is clear that $W_s(X,Y)$ endowed with the norm
\begin{eqnarray*}
\|u\|_{W_s(X,Y)}
&=&\Big(\|t^su\|_{L_2^\star(Y)}^2+\|t^su'\|_{L_2^\star(X)}^2 \Big)^{\frac 12}\\
&=&\Big(\int_0^\infty\left(\|u(t)\|_Y^2+\|u'(t)\|_X^2\right)t^{2s-1}\,dt\Big)^{\frac 12}
\end{eqnarray*}
is a Banach space and it is even a Hilbert space.

 We quote the following result from \cite[Proposition 1.2.10]{Lu95}.

\begin{proposition}\label{prop-32}
Let $u\in W_s(X,Y)$. 
Then $u(0):=\lim_{t\downarrow 0}u(t)$ exists in the norm on $X$. 
Moreover, $u(0)\in [X,Y]_{1-s}$. 
The map $u\mapsto u(0)$ from $W_s(X,Y)$ into $[X,Y]_{1-s}$ is continuous and surjective. 
\end{proposition}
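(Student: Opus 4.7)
The plan is to combine a direct Cauchy-type estimate (for existence of the trace) with the classical Lions--Peetre trace characterization of real interpolation spaces (for the identification with $[X,Y]_{1-s}$). First I would establish the limit $u(0)$ in $X$: fix $u\in W_s(X,Y)$ and, for $0<a<b\le 1$, combine the fundamental theorem of calculus with Cauchy--Schwarz against the weight $t^{2s-1}$ to obtain
\[
\|u(b)-u(a)\|_X\le\Big(\int_a^b\|u'(t)\|_X^2\,t^{2s-1}\,dt\Big)^{1/2}\Big(\int_a^b t^{1-2s}\,dt\Big)^{1/2}.
\]
Since $s<1$, the integral $\int_0^1 t^{1-2s}\,dt$ is finite, while the first factor vanishes as $a,b\downarrow 0$ because $t^s u'\in L_2^\star(X)$. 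The Cauchy criterion gives $u(0):=\lim_{t\downarrow 0}u(t)$ in $X$. The continuity bound $\|u(0)\|_X\le C\|u\|_{W_s(X,Y)}$ then follows by writing $\|u(0)\|_X\le\|u(1)\|_X+\|u(1)-u(0)\|_X$, controlling the first term via the classical embedding $W^{1,2}((1/2,1);X)\hookrightarrow C([1/2,1];X)$ and the second by the displayed inequality with $a\downarrow 0$, $b=1$.

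Second, to identify $u(0)$ as an element of $[X,Y]_{1-s}$ and to establish surjectivity, I would invoke the Lions--Peetre trace characterization of real interpolation: an element $x$ belongs to the real interpolation space $(X,Y)_{1-s,2}$ if and only if there is some function $u$ with $t^s u\in L_2^\star(Y)$, $t^s u'\in L_2^\star(X)$ and $u(0)=x$, with equivalence of the natural norms. This definition matches $W_s(X,Y)$ exactly, so both the continuous inclusion $u(0)\in(X,Y)_{1-s,2}$ and the existence of a right inverse of the trace map are immediate. Finally, since $X$ and $Y$ are Hilbert, real interpolation with second parameter $2$ coincides (up to equivalent norms) with the complex method, $(X,Y)_{1-s,2}=[X,Y]_{1-s}$, yielding the statement as formulated.

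The main obstacle is supplying a concrete right inverse of the trace if one wishes to avoid quoting the abstract trace theorem. A self-contained Hilbert space construction realizes $Y\emb X$ via a strictly positive self-adjoint operator $B$ on $X$ with form domain $Y$, and sets $u(t):=e^{-tB^{1/2}}x$. The spectral theorem reduces both weighted $L^2$-norms in $W_s(X,Y)$ to scalar integrals in $\lambda\in\sigma(B)$ that evaluate to constants times $\|x\|_{D(B^{(1-s)/2})}^2\simeq\|x\|_{[X,Y]_{1-s}}^2$, the last equivalence being the standard identification of complex interpolation with domains of fractional powers for Hilbert pairs. The Hilbert space hypothesis is essential here: in the general Banach setting neither this representation nor the identification of real and complex interpolation is available.
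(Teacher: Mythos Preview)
The paper does not prove this proposition; it simply quotes it from Lunardi \cite[Proposition~1.2.10]{Lu95}. Your argument is correct and is precisely the standard proof underlying that reference: the Lions--Peetre trace method identifies $(X,Y)_{1-s,2}$ as the set of traces $u(0)$ of functions in $W_s(X,Y)$, and in the Hilbert setting this real interpolation space agrees with $[X,Y]_{1-s}$---a fact the paper also remarks on right after stating the proposition.

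Your self-contained surjectivity construction via $u(t)=e^{-tB^{1/2}}x$ is a legitimate alternative to quoting the abstract trace theorem; the strict positivity of $B$ (needed so that $\int_0^\infty\|u(t)\|_X^2\,t^{2s-1}\,dt<\infty$) is indeed available because the embedding $Y\hookrightarrow X$ forces $\langle Bu,u\rangle_X=\|u\|_Y^2\ge c^{-2}\|u\|_X^2$. This semigroup right-inverse is in the same spirit as the paper's later representation formula~\eqref{51}, though the paper never appeals to it for the present proposition.
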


Recall that $[X,Y]_{\theta}$ is the complex interpolation space between $X$ and $Y$ for all 
$0<\theta<1$. 
Note that the complex interpolation space $[X,Y]_\theta$ coincides with the 
trace-method real interpolation space $(X,Y)_{\theta,2}$ since we restrict ourselves to 
Hilbert spaces.

Let 
\begin{eqnarray*}
C_c^\infty([0,\infty);Y):=\{u:[0,\infty)\to Y & : & u \mbox{ is infinitely differentiable } 
  \mbox{ and }  \\
& & \supp u \mbox{ is compact in } [0,\infty) \} .
\end{eqnarray*}
Clearly $C_c^\infty([0,\infty);Y)$ is a subspace of $W_s(X,Y)$.

We need the following density result.

\begin{proposition} \label{ps301}
Let $s \in (0,1)$.
Then the following assertions hold.
\begin{tabel}
\item \label{ps301-1}
If $s \geq \frac{1}{2}$, then the space $C_c^\infty([0,\infty);Y)$ is dense in $W_s(X,Y)$.
\item \label{ps301-2}
If $s < \frac{1}{2}$, then the space 
\begin{align*}
\{ u \in W_s(X,Y) \cap C^\infty((0,\infty);Y) : \supp u \mbox{ is a bounded set in } (0,\infty)\} 
\end{align*}
is dense in $W_s(X,Y)$.
\end{tabel}
\end{proposition}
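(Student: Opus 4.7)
The strategy is a standard two-step approximation: truncate at infinity, then mollify. Both steps go through directly for $s \ge 1/2$ with a right-translation mollification, but for $s < 1/2$ the weight $t^{2s-1}$ is singular at the origin, right-translations cease to be uniformly bounded on $L_2^\star$, and a logarithmic-scale mollification is needed instead.

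\textbf{Truncation.} Fix $\chi \in C_c^\infty(\RR)$ with $\chi \equiv 1$ on $[0,1]$ and $\supp \chi \subseteq [0,2]$, and set $\chi_n(t) := \chi(t/n)$. For any $u \in W_s(X,Y)$ one shows $\chi_n u \to u$ in $W_s(X, Y)$: convergence in $L_2^\star(Y)$ is by dominated convergence, and writing $(\chi_n u)' = \chi_n u' + \chi_n' u$, the first summand tends to $u'$ in $L_2^\star(X)$ by dominated convergence, while the second satisfies $\|\chi_n' u\|_{L_2^\star(X)}^2 \le \|\chi'\|_\infty^2 n^{-2}\, \|u\|_{L_2^\star(X)}^2 \to 0$. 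Hence we may assume $\supp u \subseteq [0, R]$ for some $R > 0$.

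\textbf{Case (a), $s \ge 1/2$.} Fix $\rho \in C_c^\infty((0,1))$ with $\rho \ge 0$ and $\int \rho = 1$, and set $u^\epsilon(t) := \int_0^1 u(t+\epsilon r)\,\rho(r)\, dr$ for $t \ge 0$. A Cauchy--Schwarz estimate using $\int_0^R t^{1-2s}\, dt < \infty$ gives $u \in L^1([0, R]; Y)$, so the Bochner integral is well-defined in $Y$ for every $t \ge 0$; passing the $t$-derivative onto the kernel shows $u^\epsilon \in C^\infty([0, \infty); Y)$ with $(u^\epsilon)'(t) = \int_0^1 u'(t+\epsilon r)\,\rho(r)\,dr$ and $\supp u^\epsilon \subseteq [0, R]$. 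Thus $u^\epsilon \in C_c^\infty([0, \infty); Y)$. For convergence, Jensen and Fubini give
\[
\|u^\epsilon - u\|_{L_2^\star(Y)}^2 \le \int_0^1 \rho(r)\, \|\tau_{\epsilon r} u - u\|_{L_2^\star(Y)}^2\, dr,
\]
where $\tau_\delta f(t) := f(t+\delta)$. For $s \ge 1/2$ the weight $t^{2s-1}$ is non-decreasing, so the substitution $\sigma = t + \delta$ yields $\|\tau_\delta f\|_{L_2^\star(Z)} \le \|f\|_{L_2^\star(Z)}$ for $Z \in \{X, Y\}$. Combined with $\|\tau_\delta g - g\|_{L_2^\star(Z)} \to 0$ for $g \in C_c((0,\infty); Z)$ and density, this yields continuity of $\delta \mapsto \tau_\delta f$ at $0$ for every $f \in L_2^\star(Z)$. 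Applied to $u'$ as well, this gives $(u^\epsilon)' \to u'$ in $L_2^\star(X)$, completing the proof of (a).

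\textbf{Case (b), $s < 1/2$.} The preceding translation estimate fails: $(\sigma - \delta)^{2s-1} > \sigma^{2s-1}$ gives no uniform bound on $\tau_\delta$ in $L_2^\star$. Instead, work in the logarithmic variable $y := \ln t$. Writing $v(y) := u(e^y)$, the change of variables yields
\[
\|u\|_{W_s(X,Y)}^2 = \int_\RR \|v(y)\|_Y^2\, e^{2sy}\, dy + \int_\RR \|v'(y)\|_X^2\, e^{(2s-2)y}\, dy,
\]
so $u \mapsto v$ is an isometric isomorphism onto a direct sum of exponentially-weighted $L^2(\RR)$-spaces. On these translations satisfy $\|\tau_\delta v\|^2 = e^{-2s\delta}\|v\|^2$ (and the analogous identity in the derivative component), hence are uniformly bounded and strongly continuous. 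Taking a standard mollifier $\phi_\epsilon \in C_c^\infty(\RR)$, setting $v^\epsilon := v * \phi_\epsilon$ and $u^\epsilon(t) := v^\epsilon(\ln t)$, one obtains $u^\epsilon \in C^\infty((0, \infty); Y)$ with support bounded above (inherited from the fact that $v$ vanishes on $(\ln R, \infty)$), and $u^\epsilon \to u$ in $W_s(X, Y)$ by the standard mollifier argument applied in the $y$-variable. The main technical point is precisely this switch to a logarithmic scale forced by the singular weight, which is also why smoothness up to $t = 0$ is only claimed in case (a).
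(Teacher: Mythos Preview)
Your argument is correct. The overall architecture---truncate at infinity, then mollify---is the same as the paper's, but the execution differs in two notable ways.

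First, the order is reversed: the paper mollifies first (via multiplicative convolution $(\rho_n * u)(t) = \int \rho_n(r) u(r^{-1}t)\,\frac{dr}{r}$, which is exactly your log-scale mollification recast on the group $(0,\infty)$) and truncates afterwards; you truncate first and then mollify. Second, and more substantively, your treatment of case~(a) is more direct. After obtaining a smooth compactly supported function, the paper translates by $u_n(t)=u(t+\tfrac1n)$, shows only that the sequence is \emph{bounded} in $W_s(X,Y)$ (this is where $2s-1\ge 0$ enters), extracts a weak limit, and then invokes Mazur's theorem to upgrade weak density of $C_c^\infty([0,\infty);Y)$ to norm density. You instead build the right-shift into the mollifier itself and use the contraction property $\|\tau_\delta f\|_{L_2^\star}\le \|f\|_{L_2^\star}$ together with strong continuity of translations to obtain \emph{strong} convergence directly, bypassing the weak-compactness detour. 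For case~(b) the two proofs are essentially the same: your change of variables $y=\ln t$ is precisely the paper's multiplicative-group convolution written in additive coordinates.

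What each buys: the paper's organization unifies the mollification step for all $s$ and isolates the case split to the boundary behaviour, at the cost of the indirect weak-closure argument; your approach splits cases earlier but yields a fully elementary strong-convergence proof for $s\ge\tfrac12$.
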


The proof of Proposition~\ref{ps301} requires quite some preparation.

Let $Z$ be a Hilbert space and $\theta \in (0,1)$.
Define the space
\[
W^\theta(Z)
= \{ u \in L^1_\loc(Z) : t^\theta u \in L_2^*(Z) \} 
,  \]
with the norm $\|u\|_{W^\theta(Z)} = \|t^\theta u\|_{L_2^*(Z)}$.
Note that $W^\theta(Z) = L^2((0,\infty);Z,t^{2\theta - 1} \, dt)$.

\begin{lemma} \label{ls302}
Let $\theta \in (0,1)$.
Let $u \in W^\theta(Z)$ and $r \in (0,\infty)$.
Define $L_r u \colon (0,\infty) \to Z$ by
\[
(L_ru)(t) = u(r^{-1} t)
.  \]
Then $L_r u \in W^\theta(Z)$ and 
$\|L_r u\|_{W^\theta(Z)} = r^{\theta} \|u\|_{W^\theta(Z)}$.
Moreover, $\lim_{r \to 1} L_r u = u$ in $W^\theta(Z)$
for all $u \in W^\theta(Z)$.
\end{lemma}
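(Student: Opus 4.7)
The plan is to split the lemma into two independent assertions: the exact scaling identity, and the strong continuity $L_r u \to u$ as $r \to 1$.

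For the scaling identity, I would simply perform the substitution $s = r^{-1}t$ (so $dt = r\,ds$) in the defining integral:
\begin{align*}
\|L_r u\|_{W^\theta(Z)}^2
= \int_0^\infty \|u(r^{-1}t)\|_Z^2\, t^{2\theta-1}\,dt
= \int_0^\infty \|u(s)\|_Z^2\,(rs)^{2\theta-1}\,r\,ds
= r^{2\theta}\|u\|_{W^\theta(Z)}^2.
\end{align*}
This shows in particular that $L_r\in\mathcal L(W^\theta(Z))$ with operator norm $r^\theta$, and that $L_r u\in W^\theta(Z)$.

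For the strong continuity at $r=1$, I would use the standard three-epsilon argument. The key observation is that $W^\theta(Z)$ is the weighted $L^2$-space $L^2((0,\infty);Z,t^{2\theta-1}\,dt)$, hence $C_c((0,\infty);Z)$ is dense in $W^\theta(Z)$. For a fixed $v\in C_c((0,\infty);Z)$ with $\supp v\subset [a,b]\subset(0,\infty)$, uniform continuity of $v$ and the fact that $\supp L_r v\subset[ra,rb]$ stays inside a fixed compact subset of $(0,\infty)$ for $r$ close to $1$ give $\|L_r v - v\|_{W^\theta(Z)}\to 0$ by dominated convergence (or simply by uniform convergence of $L_r v$ to $v$ together with the bounded support).

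Given $u\in W^\theta(Z)$ and $\varepsilon>0$, choose $v\in C_c((0,\infty);Z)$ with $\|u-v\|_{W^\theta(Z)}<\varepsilon$. Then
\begin{align*}
\|L_r u - u\|_{W^\theta(Z)}
\le \|L_r (u-v)\|_{W^\theta(Z)} + \|L_r v - v\|_{W^\theta(Z)} + \|v-u\|_{W^\theta(Z)}
\le (r^\theta+1)\varepsilon + \|L_r v - v\|_{W^\theta(Z)},
\end{align*}
and the last term tends to $0$ as $r\to 1$, while $(r^\theta+1)\varepsilon\to 2\varepsilon$. Letting $\varepsilon\downarrow 0$ concludes the proof.

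The only genuine subtlety is the density of $C_c((0,\infty);Z)$ in the weighted space $L^2((0,\infty);Z,t^{2\theta-1}\,dt)$; this is a routine fact (weighted Lebesgue spaces on an open interval with a locally integrable, a.e.\ positive weight satisfy the usual density of continuous functions of compact support), and I would invoke it in one sentence. Everything else is a direct computation and a standard approximation argument.
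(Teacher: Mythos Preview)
Your proof is correct and follows essentially the same strategy as the paper: the scaling identity via the substitution $s=r^{-1}t$, and then strong continuity via a density-plus-uniform-boundedness argument. The only difference is the choice of dense subset: the paper uses step functions (citing a density result from Alt's textbook) where you use $C_c((0,\infty);Z)$; both choices work and the resulting three-epsilon argument is identical.
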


\begin{proof}
Let $u \in W^\theta(Z)$ and $r \in (0,\infty)$.
Clearly $L_r u \in L^1_\loc(Z)$.
Moreover, 
\[
\|t^\theta L_r u\|_{L_2^*(Z)}^2
= \int_0^\infty \|t^\theta u(r^{-1} t)\|_Z^2 \, \frac{dt}{t}
= r^{2\theta} \int_0^\infty \|t^\theta u(t)\|_Z^2 \, \frac{dt}{t}
= r^{2\theta} \|u\|_{W^\theta(Z)}^2
.  \]
This proves the first two claims.

If $u$ is a step function, then it is easy to see that 
$\lim_{r \to 1} L_r u = u$ in $W^\theta(Z)$.
Since the step functions are dense in $L^2((0,\infty);Z,t^{2\theta - 1} \, dt)$
by \cite[Lemma~3.26(1)]{Alt}, the lemma follows.
\end{proof}

\begin{remark} \label{rs303}
The space $(0,\infty)$ with multiplication is a one-dimensional Lie group.
The Haar measure is $\frac{dt}{t}$ and the corresponding $L_2$-space 
is $L_2^*$.
Lemma~\ref{ls302} states that the vector valued left representation in $W^\theta(Z)$
is well-defined and is a continuous representation of the group
$(0,\infty)$ in $W^\theta(Z)$.
\end{remark}

For the remaining of this section fix
for all $n \in \NN$ a function $\rho_n \in C_c^\infty((0,\infty))$
such that $\rho_n \geq 0$, $\supp \rho_n \subset (1 - \frac{1}{2n}, 1 + \frac{1}{n})$
and $\lim_{n \to \infty} \int_0^\infty \rho_n(t) \, \frac{dt}{t} = 1$.
For all $\chi \in C_c^\infty((0,\infty))$, $\theta \in (0,1)$ and 
$u \in W^\theta(Z)$ define $\chi * u \colon (0,\infty) \to Z$ by 
\[
(\chi * u)(t)
= \int_0^\infty \chi(r) u(r^{-1} t) \, \frac{dr}{r}
.  \]
Clearly $\chi * u \in C^\infty((0,\infty);Z)$.

\begin{lemma} \label{ls304}
Let $\theta \in (0,1)$ and $u \in W^\theta(Z)$.
Then the following assertions hold.
\begin{tabel}
\item \label{ls304-1}
If $\chi \in C_c^\infty((0,\infty))$, then 
$\chi * u \in W^\theta(Z) \cap C^\infty((0,\infty);Z)$.
\item \label{ls304-2}
$\lim_{n \to \infty} \rho_n * u = u$ in $W^\theta(Z)$.
\end{tabel}
\end{lemma}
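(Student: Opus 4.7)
The plan is to exploit the left-translation interpretation of the convolution: for every $r \in (0,\infty)$ we have
\[
(\chi * u)(t) = \int_0^\infty \chi(r)\, (L_r u)(t) \, \frac{dr}{r},
\]
so that, viewed as a $W^\theta(Z)$-valued Bochner integral, $\chi * u = \int_0^\infty \chi(r)\, L_r u \,\frac{dr}{r}$. The key tool throughout is Lemma~\ref{ls302}, which controls both the $W^\theta(Z)$-norm of $L_r u$ and the strong continuity of $r \mapsto L_r u$.

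For part \ref{ls304-1}, smoothness in $t$ is already recorded in the paragraph preceding the lemma (differentiation under the integral works since $\chi \in C_c^\infty((0,\infty))$ has support in some compact $K \subset (0,\infty)$ bounded away from $0$). For the $W^\theta(Z)$-membership I would apply Minkowski's integral inequality (valid for Hilbert-space valued Bochner integrals) together with Lemma~\ref{ls302}:
\[
\|\chi * u\|_{W^\theta(Z)} \le \int_0^\infty |\chi(r)| \, \|L_r u\|_{W^\theta(Z)} \, \frac{dr}{r}
= \|u\|_{W^\theta(Z)} \int_0^\infty |\chi(r)|\, r^\theta \, \frac{dr}{r},
\]
and the last integral is finite because $\chi$ is bounded with compact support in $(0,\infty)$.

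For part \ref{ls304-2}, set $c_n := \int_0^\infty \rho_n(r)\,\frac{dr}{r}$, so $c_n \to 1$ by assumption. The identity
\[
(\rho_n * u)(t) - c_n u(t) = \int_0^\infty \rho_n(r)\,\bigl((L_r u)(t) - u(t)\bigr)\,\frac{dr}{r}
\]
combined with Minkowski yields
\[
\|\rho_n * u - c_n u\|_{W^\theta(Z)} \le \int_0^\infty \rho_n(r)\, \|L_r u - u\|_{W^\theta(Z)} \, \frac{dr}{r}.
\]
Given $\varepsilon > 0$, Lemma~\ref{ls302} (continuity of $r \mapsto L_r u$ at $r=1$) provides $\delta>0$ with $\|L_r u - u\|_{W^\theta(Z)} < \varepsilon$ whenever $|r-1|<\delta$; since $\supp \rho_n \subset (1-\tfrac{1}{2n}, 1+\tfrac{1}{n})$, for large $n$ the bound becomes $\varepsilon\, c_n$. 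Hence $\rho_n * u - c_n u \to 0$ in $W^\theta(Z)$, and $c_n u \to u$ gives the claim.

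The main (minor) obstacle is simply making sure the Bochner integral is well defined and that Minkowski's inequality is applied legitimately; both are automatic because $\chi$ (resp.\ $\rho_n$) is bounded and compactly supported in $(0,\infty)$ and $r \mapsto L_r u$ is strongly continuous into the Hilbert space $W^\theta(Z)$. Everything else is a standard approximate-identity argument transplanted from the additive group to the multiplicative group $(0,\infty)$, as anticipated in Remark~\ref{rs303}.
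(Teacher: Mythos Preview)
Your proposal is correct and follows essentially the same approach as the paper's proof: both part \ref{ls304-1} arguments yield the identical bound $\|\chi*u\|_{W^\theta(Z)}\le \|u\|_{W^\theta(Z)}\int_0^\infty r^\theta|\chi(r)|\,\frac{dr}{r}$ via Minkowski's inequality (you phrase it through the Bochner integral $\int_0^\infty \chi(r)\,L_r u\,\frac{dr}{r}$, the paper through a pointwise estimate), and part \ref{ls304-2} is the same approximate-identity computation with $c_n=\lambda_n=\int_0^\infty\rho_n(r)\,\frac{dr}{r}$ and the continuity of $r\mapsto L_r u$ from Lemma~\ref{ls302}.
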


\begin{proof}
\ref{ls304-1}.
Let $t \in (0,\infty)$.
Then 
\[
t^\theta \|(\chi * u)(t)\|_Z
\leq \int_0^\infty r^\theta |\chi(r)| \, (r^{-1} t)^\theta \|u(r^{-1} t)\|_Z \, \frac{dr}{r}
.  \]
So 
\[
\Big( \int_0^\infty \|t^\theta (\chi * u)(t)\|_Z^2 \, \frac{dt}{t} \Big)^{\frac 12}
\leq \int_0^\infty t^\theta |\chi(t)| \, \frac{dt}{t}
 \cdot 
   \Big( \int_0^\infty \|t^\theta u(t)\|_Z^2 \, \frac{dt}{t} \Big)^{\frac 12}
.  \]
Therefore $\chi * u \in W^\theta(Z)$.

\ref{ls304-2}.
Let $n \in \NN$.
Set $\lambda_n = \int_0^\infty \rho_n(r) \, \frac{dr}{r}$.
Then 
\[
\rho_n * u - \lambda_n u
= \int_0^\infty \rho_n(r) (L_r u - u) \, \frac{dr}{r}
.  \]
So 
\[
\|\rho_n * u - u\|_{W^\theta(Z)}
\leq |1 - \lambda_n| \, \|u\|_{W^\theta(Z)} 
   + \int_0^\infty \rho_n(r) \|L_r u - u\|_{W^\theta(Z)} \, \frac{dr}{r}
.  \]
Then the statement follows from Lemma~\ref{ls302} together with the 
condition that $\lim_{n \to \infty} \lambda_n = 1$.
\end{proof}

As an immediately consequence we obtain the next proposition.

\begin{proposition} \label{ps305}
Let $\theta \in (0,1)$.
Then the space $W^\theta(Z) \cap C^\infty((0,\infty);Z)$ is dense 
in $W^\theta(Z)$.
\end{proposition}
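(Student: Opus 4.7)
The plan is straightforward since all the analytical work has been assembled in the preceding two lemmas. Given $u \in W^\theta(Z)$, I would approximate $u$ by the mollifications $u_n := \rho_n * u$, using the bump sequence $(\rho_n)_{n \in \NN} \subset C_c^\infty((0,\infty))$ fixed just before Lemma~\ref{ls304}. The strategy mirrors the classical convolution-approximation argument, but carried out with respect to the multiplicative group structure on $(0,\infty)$ with Haar measure $\frac{dt}{t}$, as emphasized in Remark~\ref{rs303}.

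The verification has two pieces. First, I need $u_n \in W^\theta(Z) \cap C^\infty((0,\infty);Z)$ for each $n$. Smoothness on $(0,\infty)$ is built into the definition of $\chi * u$ (differentiation under the integral sign is justified because $\chi$ has compact support in $(0,\infty)$), and membership in $W^\theta(Z)$ is exactly Lemma~\ref{ls304}\ref{ls304-1} applied with $\chi = \rho_n$. Second, I need $u_n \to u$ in the norm of $W^\theta(Z)$, which is precisely Lemma~\ref{ls304}\ref{ls304-2}.

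Combining these two observations shows that $u$ lies in the closure of $W^\theta(Z) \cap C^\infty((0,\infty);Z)$ in $W^\theta(Z)$, and since $u$ was arbitrary the density assertion follows. There is no real obstacle here: the proposition is essentially a repackaging of Lemma~\ref{ls304}, with the continuous representation property from Lemma~\ref{ls302} doing the underlying work of ensuring that convolution against an approximate identity actually converges.
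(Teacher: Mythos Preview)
Your proposal is correct and matches the paper's approach exactly: the paper simply states that Proposition~\ref{ps305} is an immediate consequence of Lemma~\ref{ls304}, and your write-up spells out precisely that deduction. There is nothing to add.
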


Now we are able to prove Proposition~\ref{ps301}.

\begin{proof}[{\bf Proof of Proposition~\ref{ps301}}]
The proof is in several steps.

{\bf Step 1}.
Let $s \in (0,1)$.
We claim that the space $W_s(X,Y) \cap C^\infty((0,\infty);Y)$ is dense in $W_s(X,Y)$.  
Indeed, let $u \in W_s(X,Y)$.
Then $\rho_n * u \in C^\infty((0,\infty);Y)$ for all $n \in \NN$ and 
$\lim_{n \to \infty} \rho_n * u = u$ in $W^s(Y)$ by Lemma~\ref{ls304}\ref{ls304-2}.
For all $n \in \NN$ define $\psi_n \in C_c^\infty((0,\infty))$ by 
$\psi_n(r) = \frac{1}{r} \rho_n(r)$.
Then $\psi_n \geq 0$ and $\supp \psi_n \subset (1 - \frac{1}{2n}, 1 + \frac{1}{n})$.
Moreover, $\lim_{n \to \infty} \int_0^\infty \psi_n(r) \, \frac{dr}{r} = 1$.
Hence $\lim_{n \to \infty} (\rho_n * u)'
= \lim_{n \to \infty} \psi_n * (u')
= u'$ in $W^s(X)$ by Lemma~\ref{ls304}\ref{ls304-2}, 
this time applied with $\rho_n$ replaced by $\psi_n$.
So $\lim_{n \to \infty} \rho_n * u = u$ in $W_s(X,Y)$ and the claim is proved.

{\bf Step 2}.
Let $s \in (0,1)$.
We show that the space
$ \{ u \in W_s(X,Y) \cap C^\infty((0,\infty);Y) : \supp u \mbox{ is a bounded set in } (0,\infty)\} $
is dense in $W_s(X,Y)$.
 In fact, since $Y$ is continuously embedded into $X$ there exists a constant $c > 0$ 
such that $\|y\|_X \leq c \|y\|_Y$ for all $y \in Y$.
Let $u \in W_s(X,Y) \cap C^\infty((0,\infty);Y)$.
Let $\chi \in C^\infty([0,\infty))$ be such that 
$\one_{[0,1]} \leq \chi \leq \one_{[0,2]}$.
For all $n \in \NN$ define $\chi_n \colon [0,\infty) \to \RR$ by 
$\chi_n(t) = \chi(\frac{t}{n})$.
Moreover, define $u_n = \chi_n u$.
Then $u_n \in W_s(X,Y)$.
If $n \in \NN$, then 
\[
\|t^s (u - u_n)\|_{L_2^*(Y)}^2
= \int_0^\infty \Big| t^s (1 - \chi_n)(t) \|u(t)\|_Y \Big|^2 \, \frac{dt}{t}
\leq \int_n^\infty \|t^s u(t)\|_Y^2 \, \frac{dt}{t}
.  \]
So $\lim_{n \to \infty} \|t^s (u - u_n)\|_{L_2^*(Y)} = 0$.
Next, $u_n' = \chi_n' u + \chi_n u'$ for all $n \in \NN$.
It follows similarly that 
$\lim_{n \to \infty} \|t^s (u' - \chi_n u')\|_{L_2^*(X)} = 0$.
We shall show that $\lim_{n \to \infty} \|t^s \chi_n' u\|_{L_2^*(X)} = 0$.
Let $n \in \NN$.
Then
\begin{eqnarray*}
\|t^s \chi_n' u\|_{L_2^*(X)}^2
& = & \frac{1}{n^2} \int_0^\infty \Big| t^s \chi'(\frac{t}{n}) \|u(t)\|_X \Big|^2 \, \frac{dt}{t}  \\
& \leq & \frac{\|\chi'\|_\infty^2}{n^2}
   \int_0^\infty \|t^s u(t)\|_X^2 \, \frac{dt}{t}  \\
& \leq & \frac{c^2 \|\chi'\|_\infty^2}{n^2} \, \|t^s u\|_{L_2^*(Y)}^2
.
\end{eqnarray*}
So $\lim_{n \to \infty} \|t^s \chi_n' u\|_{L_2^*(X)} = 0$ and 
hence $\lim_{n \to \infty} u_n = u$ in $W_s(X,Y)$.
Then  Step~2 follows by an application of Step~1.

{\bf Step 3}.
We prove the two statements of Proposition~\ref{ps301}.

\ref{ps301-2}.
This is a special case of Step~2.

\ref{ps301-1}. 
Let $u \in W_s(X,Y) \cap C^\infty((0,\infty);Y)$ and suppose that 
$\supp u$ is a bounded set in $(0,\infty)$.
For all $n \in \NN$ define $u_n \colon (0,\infty) \to Y$ by 
$u_n(t) = u(t + \frac{1}{n})$.
Then $u_n \in C_c^\infty([0,\infty);Y)$.
Moreover, if $n \in \NN$, then 
\begin{eqnarray*}
\|t^s u_n\|_{L_2^*(Y)}^2
& = & \int_0^\infty t^{2s-1} \|u(t + \frac{1}{n})\|_Y^2 \, dt
= \int_{\frac{1}{n}}^\infty (t - \frac{1}{n})^{2s-1} \|u(t)\|_Y^2 \, dt  \\
& \leq & \int_{\frac{1}{n}}^\infty t^{2s-1} \|u(t)\|_Y^2 \, dt
\leq \|t^s u\|_{L_2^*(Y)}^2
\leq \|u\|_{W_s(X,Y)}^2
,  
\end{eqnarray*}
where we have used that $2s-1 \geq 0$ in the first inequality.
Similarly, $\|t^s u_n'\|_{L_2^*(X)} \leq \|u\|_{W_s(X,Y)}$ for all $n \in \NN$.
Hence the sequence $(u_n)_{n \in \NN}$ is bounded in $W_s(X,Y)$.
Therefore it has a subsequence which converges weakly in $W_s(X,Y)$.
So $u$ is in the weak closure of $C_c^\infty([0,\infty);Y)$ in $W_s(X,Y)$.

Together with Step~2 it follows that $C_c^\infty([0,\infty);Y)$ is weakly 
dense in $W_s(X,Y)$.
Since $C_c^\infty([0,\infty);Y)$ is convex, it is then also norm dense 
in $W_s(X,Y)$.
\end{proof}

Next, we want to specify our settings to {\bf Gelfand triples}; i.e.\ we consider two  Hilbert spaces $H$, $V$ such that $V\emb H$.
  Let $i$ be the inclusion from $V$ into $H$.
Then the dual map $i^*$ is a continuous map from $H'$ into $V'$, where $H'$ and $V'$ denote the antidual of $H$ and $V$, respectively.
Since $i$ has dense image, the map $i^*$ is injective.
Moreover, it also has a dense image.
By the Riesz representation theorem one can identify $H$ with $H'$.
We call $H$ the {\bf pivot space}.
Thus one has the chain
\begin{align*}
V \hookrightarrow  H \simeq H' \hookrightarrow V'
\end{align*}
which is known as a Gelfand triple. 
Therefore one has the following continuous and dense embeddings
\begin{align*}
V\emb H\emb V'.
\end{align*}

\begin{remark}
By the spectral theorem up to unitary equivalence one can assume that $H=L^2(\Gamma,\sigma)$ for some measure space $(\Gamma,\Sigma,\sigma)$, and $V=L^2(\Gamma,m \, d\sigma)$ for some measurable function $m\colon\Gamma\to [1,\infty)$.
Then $V'=L^2(\Gamma,\frac{d\sigma}{m})$ and
 the duality is given by
\begin{align*}
\langle f,g\rangle_{V',V}=\int_{\Gamma}f(x)\overline{g(x)}\,d\sigma(x)
\end{align*}
for all $f\in V'$ and $g\in V$. 
Thus $\langle f,g\rangle_{V',V}$ is written in terms of the 
measure $\sigma$ without weight. 
This is the reason for calling $H$ the pivot space. 
In this unitary equivalent situation the complex interpolation space becomes
\begin{align*}
[H,V]_s=L^2(\Gamma,m^s \, d\sigma)
\quad \mbox{and} \quad
[H,V']_s=L^2(\Gamma,m^{-s} \, d\sigma).
\end{align*}
In particular, $[H,V]_s'=[H,V']_s$ and we have the new Gelfand triple
\[
[H,V]_s\emb H\emb [H,V']_s,
\]
with again $H$ as pivot space.
\end{remark}

The following integration by parts formula will be crucial for us.

\begin{proposition}\label{prop-35}
Let $0<s<1$. 
Let $w\in W_s(V',H)$ and $v\in W_{1-s}(H,V)$. 
Then
$t \mapsto \langle w'(t),v(t)\rangle_{V',V}$ and 
$t \mapsto \langle w(t),v'(t)\rangle_H$ are elements of $L^1((0,\infty))$.
Moreover, 
\[
-\int_0^\infty \langle w'(t),v(t)\rangle_{V',V} \,dt
= \int_0^\infty \langle w(t),v'(t)\rangle_H \,dt+\langle w(0),v(0)\rangle_{[H,V']_s,[H,V]_s}.
\]
\end{proposition}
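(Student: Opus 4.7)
The plan is: (a) establish integrability of the two integrands by a weighted Cauchy--Schwarz inequality and, together with the trace estimates of Proposition~\ref{prop-32}, deduce that all three terms of the claimed identity are continuous bilinear forms on $W_s(V',H) \times W_{1-s}(H,V)$; (b) verify the identity on a dense subspace by classical integration by parts on a finite interval; and (c) pass to the limit by continuity.

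For (a), almost everywhere
\[
|\langle w'(t), v(t) \rangle_{V',V}|
\leq \|w'(t)\|_{V'}\|v(t)\|_V
= \bigl(t^s \|w'(t)\|_{V'}\bigr)\bigl(t^{1-s} \|v(t)\|_V\bigr)\,t^{-1},
\]
and likewise $|\langle w(t), v'(t) \rangle_H| \leq (t^s \|w(t)\|_H)(t^{1-s} \|v'(t)\|_H)\,t^{-1}$. Cauchy--Schwarz with respect to $\tfrac{dt}{t}$ places both integrands in $L^1((0,\infty))$ with norms bounded by $\|w\|_{W_s(V',H)}\|v\|_{W_{1-s}(H,V)}$. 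The continuous trace maps supplied by Proposition~\ref{prop-32}, together with the identification $[H,V]_s'=[H,V']_s$ recalled in the preceding remark, make the boundary pairing $\langle w(0), v(0) \rangle_{[H,V']_s,[H,V]_s}$ a continuous bilinear form as well.

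For (b), by Proposition~\ref{ps301} I choose approximating sequences $w_n \to w$ in $W_s(V',H)$ and $v_n \to v$ in $W_{1-s}(H,V)$ that are smooth on $(0,\infty)$ with support bounded in $(0,\infty)$, arranged so that whichever of $s,\,1-s$ is at least $\tfrac12$ corresponds to a sequence actually in $C_c^\infty([0,\infty);Y)$ (with $Y=H$ or $Y=V$ accordingly). For such $w_n, v_n$ the function $t \mapsto \langle w_n(t), v_n(t) \rangle_H$ is $C^1$ on $(0,\infty)$, and classical integration by parts on $[\varepsilon, R]$ yields
\[
\langle w_n(R), v_n(R) \rangle_H - \langle w_n(\varepsilon), v_n(\varepsilon) \rangle_H
= \int_\varepsilon^R \langle w_n'(t), v_n(t) \rangle_H\,dt + \int_\varepsilon^R \langle w_n(t), v_n'(t) \rangle_H\,dt .
\]
For $R$ beyond both supports the $R$-term vanishes; as $\varepsilon \downarrow 0$ the two integrals converge to their counterparts over $(0,\infty)$ since the integrands lie in $L^1$, while the $\varepsilon$-boundary term converges to $\langle w_n(0), v_n(0) \rangle_{[H,V']_s,[H,V]_s}$---the factor lying in $C_c^\infty([0,\infty);\cdot)$ converges in its own strong norm, the other factor converges in $V'$- or $H$-norm by Proposition~\ref{prop-32}, and the three pairings $\langle \cdot, \cdot \rangle_H$, $\langle \cdot, \cdot \rangle_{V',V}$, $\langle \cdot, \cdot \rangle_{[H,V']_s,[H,V]_s}$ coincide wherever simultaneously defined. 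This compatibility of pairings at the boundary---the step I expect to require the most care, since the two traces converge in genuinely different topologies---rests on $H$ being the pivot of the Gelfand triple. The continuity established in step (a) then allows me to pass $n \to \infty$ and obtain the identity for $(w,v)$.
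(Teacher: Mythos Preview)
Your proof is correct and follows essentially the same route as the paper's: both reduce the identity to a dense subspace via the continuity established by weighted Cauchy--Schwarz plus Proposition~\ref{prop-32}, and both handle the boundary term $\langle w_n(\varepsilon),v_n(\varepsilon)\rangle_H$ by the same case split on whether $s\ge\tfrac12$ or $1-s\ge\tfrac12$, putting the $C_c^\infty([0,\infty);\cdot)$ approximants on whichever side admits them so that one factor converges in a strong enough norm to pair against the Proposition~\ref{prop-32} trace of the other. The only cosmetic difference is that the paper verifies the identity directly on the dense product set, whereas you phrase it with explicit sequences $(w_n,v_n)$ and a final $n\to\infty$ step; the content is the same.
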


\begin{proof}
Let $w\in W_s(V',H)$ and $v\in W_{1-s}(H,V)$. 
By definition $t^s w' \in L_2^*(V')$ and 
$t^{1-s} v \in L_2^*(V)$.
So $t \mapsto \langle w'(t),v(t)\rangle_{V',V}$ is an element of $L^1((0,\infty))$.
Similarly, $t^s w \in L_2^*(H)$ and $t^{1-s} v' \in L_2^*(H)$.
Consequently $t \mapsto \langle w(t),v'(t)\rangle_H$ is an element of $L^1((0,\infty))$.

Together with Proposition~\ref{prop-32} it follows that the map 
\[
(w,v) \mapsto 
\int_0^\infty \Big( \langle w'(t),v(t)\rangle_{V',V} + \langle w(t),v'(t)\rangle_H \Big) \,dt
+ \langle w(0),v(0)\rangle_{[H,V']_s,[H,V]_s}
\]
is continuous from $W_s(V',H) \times W_{1-s}(H,V)$ into $\CC$.
Hence it suffices to show that 
\begin{equation}
\int_0^\infty \Big( \langle w'(t),v(t)\rangle_{V',V} + \langle w(t),v'(t)\rangle_H \Big) \,dt
+ \langle w(0),v(0)\rangle_{[H,V']_s,[H,V]_s}
= 0
\label{eprop-35;2}
\end{equation}
for all $(w,v)$ in a dense subset of $W_s(V',H) \times W_{1-s}(H,V)$.

Let $w \in W_s(V',H) \cap C^\infty((0,\infty);H)$, 
$v \in W_{1-s}(H,V) \cap C^\infty((0,\infty);H)$ and suppose that 
both $\supp w$ and $\supp v$ are bounded sets in $(0,\infty)$.
Then 
\begin{eqnarray}
\lefteqn{
\int_0^\infty \Big( \langle w'(t),v(t)\rangle_{V',V} + \langle w(t),v'(t)\rangle_H \Big) \,dt
} \hspace*{30mm}  \nonumber  \\*
& = & \lim_{\varepsilon \downarrow 0} 
   \int_\varepsilon^\infty \Big( 
     \langle w'(t),v(t)\rangle_{V',V} + \langle w(t),v'(t)\rangle_H 
                           \Big) \,dt  \nonumber  \\
& = & \lim_{\varepsilon \downarrow 0} 
   \int_\varepsilon^\infty \Big( \langle w'(t),v(t)\rangle_H 
         + \langle w(t),v'(t)\rangle_H \Big) \,dt  \nonumber  \\
& = & \lim_{\varepsilon \downarrow 0} 
   - \langle w(\varepsilon),v(\varepsilon)\rangle_H
.
\label{eprop-35;1}
\end{eqnarray}
We distinguish two cases.

{\bf Case 1}.
Suppose that $s \geq \frac{1}{2}$.  \\
Let $w \in C_c^\infty([0,\infty);H)$ and 
$v \in W_{1-s}(H,V) \cap C^\infty((0,\infty);H)$ be such that $\supp v$ is a bounded set in $(0,\infty)$.
Then $\lim_{\varepsilon \downarrow 0} v(\varepsilon) = v(0)$ in $H$ 
by Proposition~\ref{prop-32}.
So 
\[
\lim_{\varepsilon \downarrow 0} 
   \langle w(\varepsilon),v(\varepsilon)\rangle_H
= \langle w(0),v(0)\rangle_H
= \langle w(0),v(0)\rangle_{[H,V']_s,[H,V]_s}
.  \]
Hence (\ref{eprop-35;2}) is valid by using (\ref{eprop-35;1}).
Since $C_c^\infty([0,\infty);H)$ is dense in $W_s(V,H)$ by 
Proposition~\ref{ps301}\ref{ps301-2} and the space
\begin{align*}
\{ v \in W_{1-s}(H,V) \cap C^\infty((0,\infty);V) : \supp v
      \mbox{ is a bounded set in  } (0,\infty)\} 
\end{align*}
is dense in $W_{1-s}(H,V)$ by Proposition~\ref{ps301}\ref{ps301-1}, 
the proposition follows in this case.

{\bf Case 2}.
Suppose that $s < \frac{1}{2}$.  \\
Obviously $1 - s \geq \frac{1}{2}$, so now the space 
$C_c^\infty([0,\infty);V)$ is dense in $W_{1-s}(H,V)$ by 
Proposition~\ref{ps301}\ref{ps301-2}.
Let $v \in C_c^\infty([0,\infty);V)$ and 
$w \in W_s(V,H) \cap C^\infty((0,\infty);V)$ with $\supp w$ a bounded set in $(0,\infty)$.
Then Proposition~\ref{prop-32} implies that 
$\lim_{\varepsilon \downarrow 0} w(\varepsilon) = w(0)$ in~$V'$.
Also $\lim_{\varepsilon \downarrow 0} v(\varepsilon) = v(0)$ in $V$
since $v \in C_c^\infty([0,\infty);V)$.
So 
\begin{eqnarray*}
\lim_{\varepsilon \downarrow 0} 
   \langle w(\varepsilon),v(\varepsilon)\rangle_H
& = & \lim_{\varepsilon \downarrow 0} 
   \langle w(\varepsilon),v(\varepsilon)\rangle_{V',V}  \\
& = & \langle w(0),v(0)\rangle_{V',V}
= \langle w(0),v(0)\rangle_{[H,V']_s,[H,V]_s}
.  
\end{eqnarray*}
By \eqref{eprop-35;1} one deduces \eqref{eprop-35;2}
and the density of Proposition~\ref{ps301}
completes the proof in this case.
\end{proof}

\section{The Dirichlet  and Neumann problem}\label{sec4}

The aim of this section is to prove well-posedness and regularity of solutions of a Dirichlet  and a Neumann problem.

Let $V$, $H$ be Hilbert spaces such that $V\emb H$ and let $\mathcal E\colon V\times V\to \CC$ 
be a continuous and coercive sesquilinear form.
So there are constants $\mu,M > 0$ such that 
$|\mathcal E(u,v)|\le M\|u\|_V\|v\|_V$ and 
$\RRe \mathcal E(u,u)\ge \mu\|u\|_V^2$
for all $u,v \in V$.
Denote by $\mathcal A\in\mathcal L(V,V')$ the operator given by
\begin{align*}
\langle \mathcal A u,v\rangle_{V',V}=\mathcal E(u,v)
\end{align*} 
for all $u,v \in V$.
Throughout the remainder of the paper, we shall  use the notation $\mathcal E(u):=\mathcal E(u,u)$.
Let $0<s<1$ be fixed throughout this section.
We are interested in the equation
\begin{align}\label{41}
u''(t)+\frac{1-2s}{t}u'(t)-\mathcal Au(t)=0, \quad t\in (0,\infty).
\end{align}
We shall see in Theorem~\ref{theo-46} that the Sobolev space in the next definition
is the correct space for the well-posedness of the Dirichlet problem.

\begin{definition}
An {\bf $(\mathcal E,s)$-harmonic function} (or shortly {\bf $s$-harmonic function}) is a 
function $u\in W_{1-s}(H,V)$ such that $t^{1-2s} u' \in W_s(V',H)$ and 
\begin{align}\label{42}
-(t^{1-2s}u')'(t) +t^{1-2s}\mathcal Au(t)=0 \mbox{ in } V' \mbox{ for a.e.\ } t \in (0,\infty).
\end{align}
\end{definition}

Note that both functions $(t^{1-2s}u')'$ and $t^{1-2s}\mathcal Au$ 
are in $L^2(V',t^{2s}\frac{dt}{t})$ 
so that we actually obtain an identity in this space. 
Note also that  \eqref{42} is equivalent to \eqref{41}.

If $u$ is $s$-harmonic, then  Proposition \ref{prop-32} implies that
\[
u(0):=\lim_{t\downarrow 0}u(t)
\]
exists in $H$ and is an element of $[H,V]_s$. 
Similarly,
\begin{align}\label{44-1}
-\lim_{t\downarrow 0}t^{1-2s}u'(t)
\end{align}
exists in $V'$ and is an element of $[H,V']_s$. 
We consider this limit as an {\bf s-normal derivative}.
If $s=\frac 12$ then it equals $-u'(0)$.

In this section we are interested in the following two problems.
\begin{itemize}
\item Given $x\in [H,V]_s$, the {\bf Dirichlet problem} consists in finding an $s$-harmonic function $u$ such that $u(0)=x$.

\item Given $y\in [H,V']_s$, the {\bf Neumann problem} consists in finding an $s$-harmonic function $u$ such that $y=-\lim_{t\downarrow 0}t^{1-2s}u'(t)$.  
\end{itemize}
We will see that both problems are well-posed.

We define the sesquilinear form $\gotb_s\colon W_{1-s}(H,V)\times W_{1-s}(H,V)\to\CC$ by
\begin{align}\label{form-bs}
\gotb_s(u,v):=\int_0^\infty \Big( \langle u'(t),v'(t)\rangle_H+\mathcal E(u(t),v(t)) \Big) t^{2(1-s)}\,\frac{dt}{t}.
\end{align}
Then $\gotb_s$ is continuous and coercive.

\begin{lemma}\label{lem-44}
Let $u$ be $s$-harmonic.
Write $y:=-\lim_{t\downarrow 0}t^{1-2s}u'(t)$ in $V'$.
 Then
\begin{align}\label{44}
\gotb_s(u,v)=\langle y,v(0)\rangle_{[H,V']_s,[H,V]_s}
\end{align}
for all $v\in W_{1-s}(H,V)$.
In particular,
\begin{align}\label{45}
\gotb_s(u)=\langle y,u(0)\rangle_{[H,V']_s,[H,V]_s}.
\end{align}
\end{lemma}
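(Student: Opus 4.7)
The plan is to obtain \eqref{44} as a direct application of the integration by parts formula (Proposition~\ref{prop-35}), with a judicious choice of the two arguments.

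Set $w := t^{1-2s} u'$. By the definition of $s$-harmonicity we have $w \in W_s(V',H)$, so Proposition~\ref{prop-32} tells us that $w(0) = \lim_{t \downarrow 0} t^{1-2s} u'(t)$ exists in $V'$ and lies in $[H,V']_s$; by the definition of $y$ this gives $w(0) = -y$. Next, the equation \eqref{42} reads $w'(t) = t^{1-2s}\mathcal{A}u(t)$ in $V'$ for a.e.\ $t \in (0,\infty)$.

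Now fix an arbitrary $v \in W_{1-s}(H,V)$ and apply Proposition~\ref{prop-35} to the pair $(w,v)$. One obtains
\[
-\int_0^\infty \langle w'(t), v(t)\rangle_{V',V}\, dt
= \int_0^\infty \langle w(t), v'(t)\rangle_H\, dt
+ \langle w(0), v(0)\rangle_{[H,V']_s,[H,V]_s}.
\]
Substituting $w'(t) = t^{1-2s}\mathcal{A}u(t)$, $w(t) = t^{1-2s} u'(t)$ and $w(0) = -y$, and recalling that $\langle \mathcal{A}u(t),v(t)\rangle_{V',V} = \mathcal{E}(u(t),v(t))$, this becomes
\[
-\int_0^\infty t^{1-2s}\mathcal{E}(u(t),v(t))\, dt
= \int_0^\infty t^{1-2s} \langle u'(t), v'(t)\rangle_H\, dt
- \langle y, v(0)\rangle_{[H,V']_s,[H,V]_s}.
\]
Rearranging and rewriting $t^{1-2s}\, dt = t^{2(1-s)}\, \frac{dt}{t}$ to match the definition \eqref{form-bs} of $\gotb_s$ yields \eqref{44}. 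The identity \eqref{45} follows by specialising $v = u$, which is legitimate since $u \in W_{1-s}(H,V)$ by assumption.

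There is no real obstacle here; the only point requiring care is to verify that the two regularity hypotheses of Proposition~\ref{prop-35} are met. The assumption $w \in W_s(V',H)$ is built into the definition of $s$-harmonicity, while $v \in W_{1-s}(H,V)$ is exactly the test space in which \eqref{44} is claimed, so both prerequisites are available for free. The compatibility of the pairing $\langle w(0), v(0)\rangle_{[H,V']_s,[H,V]_s}$ with the pairing appearing in \eqref{44} is automatic because $-y = w(0) \in [H,V']_s$ and $v(0) \in [H,V]_s$ by Proposition~\ref{prop-32}.
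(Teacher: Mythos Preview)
Your proof is correct and follows essentially the same route as the paper: set $w=t^{1-2s}u'$, apply the integration by parts formula of Proposition~\ref{prop-35} to the pair $(w,v)$, and substitute $w'(t)=t^{1-2s}\mathcal{A}u(t)$ together with $w(0)=-y$. The only cosmetic difference is that the paper writes the identity with $\langle y,v(0)\rangle$ inserted directly (having already flipped the sign), whereas you carry $w(0)$ through and substitute at the end.
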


\begin{proof}
Note that $u\in W_{1-s}(H,V)$ since $s$-harmonic.
Set $w:=t^{1-2s}u'$.
Then $w\in W_s(V',H)$.
Let $v\in W_{1-s}(H,V)$. 
Then Proposition \ref{prop-35} gives
\begin{align*}
\int_0^\infty\langle w'(t),v(t)\rangle_{V',V}\,dt
=-&\int_0^\infty\langle w(t),v'(t)\rangle_H\,dt+\langle y,v(0)\rangle_{[H,V']_s,[H,V]_s}\\
=-&\int_0^\infty\langle u'(t),v'(t)\rangle_Ht^{1-2s}\,dt+\langle y,v(0)\rangle_{[H,V']_s,[H,V]_s}.
\end{align*}
Since $w'(t)=t^{1-2s}\mathcal Au(t)$ in $V'$ for a.e.\ $t \in (0,\infty)$, it follows that 
\begin{align*}
\langle w'(t),v(t)\rangle_{V',V}=t^{1-2s}\mathcal E(u(t),v(t))
\end{align*}
for a.e.\ $t \in (0,\infty)$.
 This proves \eqref{44}.
\end{proof}

Conversely, we may use the form $\gotb_s$ to prove $s$-harmonicity using only a small space of test functions.

\begin{lemma}\label{lem-45}
Let $u\in W_{1-s}(H,V)$.
Assume $\gotb_s(u,v)=0$ for all $v\in C_c^\infty((0,\infty);V)$.
Then $u$ is $s$-harmonic.
\end{lemma}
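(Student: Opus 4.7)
The plan is to verify the two defining properties of an $s$-harmonic function: (i) that $w := t^{1-2s} u'$ belongs to $W_s(V', H)$, and (ii) that its weak derivative satisfies $w' = t^{1-2s} \mathcal{A} u$ in $V'$ for a.e.\ $t$. Both will follow by combining the hypothesis on $\gotb_s$ with the integrability built into $u \in W_{1-s}(H, V)$ and the boundedness of $\mathcal{A} \colon V \to V'$.

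First I would extract a $V'$-valued distributional identity for $w$ by specializing the test functions. For any fixed $x \in V$ and $\varphi \in C_c^\infty((0,\infty))$, the function $v(t) := \varphi(t) x$ lies in $C_c^\infty((0,\infty); V)$, so $\gotb_s(u, v) = 0$. Using the Gelfand-triple identifications $\langle u'(t), x\rangle_H = \langle u'(t), x\rangle_{V', V}$ and $\mathcal{E}(u(t), x) = \langle \mathcal{A} u(t), x\rangle_{V', V}$, the equation $\gotb_s(u,v)=0$ rewrites as
\[
-\int_0^\infty \varphi'(t) \, \langle t^{1-2s} u'(t), x\rangle_{V', V}\, dt
= \int_0^\infty \varphi(t) \, \langle t^{1-2s} \mathcal{A} u(t), x\rangle_{V', V}\, dt .
\]
Pulling the continuous linear functional $\langle \,\cdot\, , x \rangle_{V', V}$ out of the Bochner integrals and letting $x$ range over $V$ gives the vector-valued identity $-\int_0^\infty \varphi'\, w \, dt = \int_0^\infty \varphi\, t^{1-2s} \mathcal{A} u \, dt$ in $V'$, which is precisely Definition~\ref{def-31}'s requirement that $w$ admits the weak derivative $w' = t^{1-2s} \mathcal{A} u$ in $V'$.

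Next I would check the integrability conditions needed to place $w$ in $W_s(V', H)$. The assumption $u \in W_{1-s}(H, V)$ directly yields $t^s w = t^{1-s} u' \in L_2^\star(H)$, and since $t^{1-2s}$ is bounded above and below on compact subsets of $(0,\infty)$, also $w \in L^1_{\loc}(H)$. For the derivative side, $t^{1-s} u \in L_2^\star(V)$ combined with $\|\mathcal{A} u(t)\|_{V'} \le M \|u(t)\|_V$ gives $t^s w' = t^{1-s} \mathcal{A} u \in L_2^\star(V')$, and the same local-boundedness argument yields $w' \in L^1_{\loc}(V')$. Hence $w \in W_s(V', H)$, and since both $(t^{1-2s} u')'$ and $t^{1-2s} \mathcal{A} u$ lie in $L^2_{\loc}(V')$, the equation $-(t^{1-2s} u')'(t) + t^{1-2s} \mathcal{A} u(t) = 0$ holds in $V'$ for a.e.\ $t$, establishing $s$-harmonicity.

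The only subtlety, such as it is, lies in the passage from the scalar identity obtained for each $x \in V$ back to a genuine $V'$-valued weak derivative; this rests on the commutation of $\langle \,\cdot\,, x\rangle_{V', V}$ with Bochner integration and on $V'$ being separated by $V$. Apart from that the argument is straightforward bookkeeping through the Gelfand triple and the weighted integrability of $u$, so I do not anticipate any substantive obstacle.
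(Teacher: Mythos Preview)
Your proof is correct and essentially identical to the paper's: both specialize to test functions of the form $\varphi(t)\,x$ with $\varphi \in C_c^\infty((0,\infty))$ and $x \in V$ to extract the weak-derivative identity $w' = t^{1-2s}\mathcal{A}u$ in $V'$, and then read off $w \in W_s(V',H)$ from the integrability built into $u \in W_{1-s}(H,V)$ together with $\mathcal{A}\in\mathcal L(V,V')$. One cosmetic point to watch is sesquilinearity: with $v(t)=\varphi(t)x$ the form produces $\overline{\varphi'}$ and $\overline{\varphi}$ rather than $\varphi'$ and $\varphi$, which the paper sidesteps by taking $\tilde v(t)=\overline{\varphi(t)}\,x$ (equivalently, one may restrict to real~$\varphi$).
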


\begin{proof}
Let $\varphi\in C_c^\infty((0,\infty))$.
For all $v\in V$ define $\tilde v\in C_c^\infty((0,\infty);V)$ by 
$\tilde v(t)= \overline{\varphi(t)}v$.
Then by assumption
\begin{eqnarray*}
0=\gotb_s(u,\tilde v)
&=&\int_0^\infty \Big( \langle u'(t),\tilde v'(t)\rangle_H+\mathcal E(u(t),\tilde v(t)) \Big) t^{1-2s}\,dt\\
&=&\int_0^\infty\langle u'(t),v\rangle_H\varphi'(t)t^{1-2s}\,dt
   +\int_0^\infty\langle\mathcal Au(t),v\rangle_{V',V}\varphi(t)t^{1-2s}\,dt  \\
&=&\int_0^\infty \varphi'(t) \langle w(t),v\rangle_{V',V} \,dt
   +\int_0^\infty \varphi(t) \langle t^{1-2s} \mathcal Au(t),v\rangle_{V',V} \,dt  ,
\end{eqnarray*}
where $w=t^{1-2s}u'$.
Since $v\in V$ is arbitrary, Definition \ref{def-31} implies that 
\[
- w'+t^{1-2s} \mathcal Au
=0
\] 
in $L_{\loc}^1(V')$. 
Hence 
\[
-(t^{1-2s}u')'(t) +t^{1-2s} \mathcal Au(t) = 0
\]
in $V'$ for almost every $t \in (0,\infty)$.
Because $u\in W_{1-s}(H,V)$, one has 
$t^{1-s} u \in L_2^*(V)$, so  
$t^{1-s}\mathcal Au\in L_2^\star(V')$. 
Hence $t^s(t^{1-2s}\mathcal Au)\in L_2^\star(V')$ 
and this implies that $t^sw'\in L_2^\star(V')$. 
In addition $t^s w = t^s t^{1-2s} u' = t^{1-s} u' \in L_2^*(H)$, since 
$u \in W_{1-s}(H,V)$.
Therefore $w\in W_s(V',H)$.
We proved that $u$ is $s$-harmonic.
\end{proof}

We can now prove well-posedness of the Dirichlet problem and the Neumann problem.

\begin{theorem}\label{theo-46}
The following assertions hold.
\begin{tabel}
\item \label{theo-46-1}
 {\em ({\bf Dirichlet Problem})}.
Let $x\in [H,V]_s$.
Then there exists a unique $s$-harmonic function $u$ such that $u(0)=x$.

\item \label{theo-46-2}
 {\em ({\bf Neumann Problem})}.
Let $y\in [H,V']_s$.
Then there exists a unique $s$-harmonic function $u$ such that $\lim_{t\downarrow 0}-t^{1-2s}u'(t)=y$.
\end{tabel}
\end{theorem}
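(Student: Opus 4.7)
The strategy is a standard Lax--Milgram argument on the coercive form $\gotb_s$, combined with the characterizations of $s$-harmonicity supplied by Lemmas \ref{lem-44} and \ref{lem-45}, and the trace theorem (Proposition \ref{prop-32}). Throughout I exploit that the trace map $\tau \colon W_{1-s}(H,V) \to [H,V]_s$, $u \mapsto u(0)$, is continuous and surjective.

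\textbf{Dirichlet problem.} Given $x \in [H,V]_s$, use Proposition \ref{prop-32} to pick a lift $\phi \in W_{1-s}(H,V)$ with $\phi(0) = x$. The subspace
\begin{equation*}
W_{1-s}^0(H,V) := \{v \in W_{1-s}(H,V) : v(0) = 0\}
\end{equation*}
is closed, and $\gotb_s$ is continuous and coercive on it. By Lax--Milgram there is a unique $w \in W_{1-s}^0(H,V)$ with $\gotb_s(w,v) = \gotb_s(\phi,v)$ for every $v \in W_{1-s}^0(H,V)$. Set $u := \phi - w$; then $u(0) = x$ and $\gotb_s(u,v) = 0$ for all $v \in W_{1-s}^0(H,V)$, in particular for all $v \in C_c^\infty((0,\infty);V)$. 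Lemma \ref{lem-45} then yields that $u$ is $s$-harmonic. For uniqueness, if $u$ is $s$-harmonic with $u(0) = 0$, Lemma \ref{lem-44} gives $\gotb_s(u,u) = \langle y, u(0)\rangle_{[H,V']_s,[H,V]_s} = 0$ where $y$ is the $s$-normal derivative, so coercivity of $\gotb_s$ forces $u = 0$.

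\textbf{Neumann problem.} Given $y \in [H,V']_s$, define the antilinear functional
\begin{equation*}
F(v) := \langle y, v(0)\rangle_{[H,V']_s,[H,V]_s}, \qquad v \in W_{1-s}(H,V),
\end{equation*}
which is continuous by Proposition \ref{prop-32}. Lax--Milgram applied to the coercive form $\gotb_s$ on all of $W_{1-s}(H,V)$ produces a unique $u \in W_{1-s}(H,V)$ with $\gotb_s(u,v) = F(v)$ for every $v \in W_{1-s}(H,V)$. Restricting to $v \in C_c^\infty((0,\infty);V)$, where $v(0) = 0$, Lemma \ref{lem-45} shows $u$ is $s$-harmonic. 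Denote its $s$-normal derivative by $\tilde y$. By Lemma \ref{lem-44}, $\gotb_s(u,v) = \langle \tilde y, v(0)\rangle_{[H,V']_s,[H,V]_s}$ for all $v \in W_{1-s}(H,V)$, so $\langle y - \tilde y, v(0)\rangle = 0$; the surjectivity of the trace map (Proposition \ref{prop-32}) forces $y = \tilde y$. Uniqueness is again immediate from Lemma \ref{lem-44}: if $u$ is $s$-harmonic with $s$-normal derivative $0$, then $\gotb_s(u,u) = 0$ and coercivity gives $u = 0$.

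\textbf{Main obstacle.} The Lax--Milgram step itself is routine once $\gotb_s$ is known to be continuous and coercive on $W_{1-s}(H,V)$; the real content has already been prepared in Lemmas \ref{lem-44}--\ref{lem-45} and Proposition \ref{prop-35}. The delicate ingredient, already handled in Proposition \ref{prop-32}, is that the trace at $0$ lands continuously and surjectively in the correct interpolation space $[H,V]_s$ — without this the Dirichlet data and Neumann data would not match the right function spaces, and neither the Lax--Milgram setup nor the identification $\gotb_s(u,v) = \langle y, v(0)\rangle_{[H,V']_s,[H,V]_s}$ would make sense with the sharp regularity claimed.
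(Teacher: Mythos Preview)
Your proof is correct and follows essentially the same approach as the paper: lift the Dirichlet datum via Proposition~\ref{prop-32}, apply Lax--Milgram on $W_{1-s}^0(H,V)$ (resp.\ on all of $W_{1-s}(H,V)$ for the Neumann problem), invoke Lemma~\ref{lem-45} for $s$-harmonicity, and use Lemma~\ref{lem-44} together with surjectivity of the trace for the identification of the normal derivative and for uniqueness. Your uniqueness arguments are slightly more explicit than the paper's (which just cites Lemma~\ref{lem-44}), but the logic is identical.
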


\begin{proof}
\ref{theo-46-1}.
By Proposition \ref{prop-32} there exists a $\phi\in W_{1-s}(H,V)$ such that $\phi(0)=x$. 
Define $L \colon W_{1-s}^0(H,V) \to \CC$ by 
$Lv:=\gotb_s(\phi,v)$, where $W_{1-s}^0(H,V):=\{v\in W_{1-s}(H,V): v(0)=0\}$, 
which is a closed subspace of 
$W_{1-s}(H,V)$. 
Then $L$ is continuous and anti-linear.
Since the form $\gotb_s$ is coercive, there exists a unique $w\in W_{1-s}^0(H,V)$ such that 
$\gotb_s(w,v)=Lv$ for all $v\in W_{1-s}^0(H,V)$. 
Let $u:=\phi-w$.
Then $u\in W_{1-s}(H,V)$, $u(0)=\phi(0)=x$ and $\gotb_s(u,v)=0$ 
for all $v\in W_{1-s}^0(H,V)$.
It follows from Lemma \ref{lem-45} that $u$ is $s$-harmonic.
This proves existence.
Uniqueness follows from Lemma  \ref{lem-44}.

\ref{theo-46-2}.
Define $L \colon W_{1-s}(H,V) \to \CC$ by $Lv:=\langle y,v(0)\rangle_{[H,V']_s,[H,V]_s}$.
Then $L$ is continuous and anti-linear by Proposition \ref{prop-32}.
By the Lax--Milgram Lemma there exists a unique $u\in W_{1-s}(H,V)$ such that $\gotb_s(u,v)=Lv$ for all $v\in W_{1-s}(H,V)$.
In particular, $\gotb_s(u,v)=0$ for all $v\in C_c^\infty((0,\infty);V)$.
It follows from Lemma \ref{lem-45} that $u$ is $s$-harmonic.
Let $z:=-\lim_{t\downarrow 0}t^{1-2s}u'(t)$ in the sense of $V'$.
Then $z\in [H,V']_s$ by Proposition \ref{prop-32}.
From Lemma \ref{lem-44} we deduce that
\begin{align*}
\langle y,v(0)\rangle_{[H,V']_s,[H,V]_s}=\gotb_s(u,v)=\langle z,v(0)\rangle_{[H,V']_s,[H,V]_s}
\end{align*}
for all $v\in W_{1-s}(H,V)$. 
Hence $y=z$ by the surjectivity in Proposition \ref{prop-32}.
This shows that $u$ solves the Neumann problem.
Uniqueness follows also from Lemma~\ref{lem-44}.
\end{proof}

Theorem \ref{theo-46} and Proposition \ref{prop-32} allow us to define the Dirichlet-to-Neumann operator $\mathcal D_s$ in the following way.

\begin{definition}\label{def-47}
Define $\mathcal D_s \colon [H,V]_s \to [H,V']_s$ as follows.
Let $x \in [H,V]_s$. 
Let $u$ be the unique $s$-harmonic function satisfying $u(0)=x$. 
Then $\mathcal D_sx=y$, where $y=-\lim_{t\downarrow 0}t^{1-2s}u'(t)$ in $V'$.
We call $\mathcal D_s$ {\bf the Dirichlet-to-Neumann operator} 
(with respect to $s$ and $\mathcal E$). 
\end{definition}

\begin{proposition} \label{proposition46}
The operator $\mathcal D_s$ is an isomorphism from $[H,V]_s$ onto $[H,V']_s$.
\end{proposition}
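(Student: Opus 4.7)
The plan is to reduce this proposition almost entirely to the well-posedness statements in Theorem~\ref{theo-46} together with the trace estimates in Proposition~\ref{prop-32}, and then obtain continuity of the inverse by the bounded inverse theorem (or, alternatively, by a direct coercivity estimate).

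First I would establish that $\mathcal D_s$ is a bijection. Injectivity: if $\mathcal D_s x = 0$ with corresponding $s$-harmonic $u$ satisfying $u(0)=x$, then $u$ also solves the Neumann problem with datum $y=0$. Since the zero function is trivially $s$-harmonic and also solves that Neumann problem, uniqueness in Theorem~\ref{theo-46}\ref{theo-46-2} forces $u=0$, hence $x=0$. Surjectivity: given $y\in[H,V']_s$, apply Theorem~\ref{theo-46}\ref{theo-46-2} to produce an $s$-harmonic $u$ with $-\lim_{t\downarrow 0}t^{1-2s}u'(t)=y$; then $x:=u(0)\in[H,V]_s$, and by uniqueness in Theorem~\ref{theo-46}\ref{theo-46-1} this $u$ is the Dirichlet solution associated with $x$, so $\mathcal D_s x=y$.

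Next I would prove continuity of $\mathcal D_s$. Given $x\in[H,V]_s$, the Lax--Milgram construction in the proof of Theorem~\ref{theo-46}\ref{theo-46-1} together with the coercivity of $\gotb_s$ yields
\[
\|u\|_{W_{1-s}(H,V)} \le C\,\|x\|_{[H,V]_s}.
\]
Setting $w:=t^{1-2s}u'\in W_s(V',H)$ and using the $s$-harmonic equation $w'=t^{1-2s}\mathcal A u$, a short computation gives
\[
\|w\|_{W_s(V',H)}^2
= \|t^{1-s}u'\|_{L_2^\star(H)}^2 + \|t^{1-s}\mathcal A u\|_{L_2^\star(V')}^2
\le (1+\|\mathcal A\|_{\mathcal L(V,V')}^2)\,\|u\|_{W_{1-s}(H,V)}^2.
\]
Since $\mathcal D_s x = w(0)$ in the sense of \eqref{44-1}, Proposition~\ref{prop-32} applied to $W_s(V',H)$ gives $\|\mathcal D_s x\|_{[H,V']_s}\le C'\|w\|_{W_s(V',H)}\le C''\|x\|_{[H,V]_s}$.

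Finally, for continuity of $\mathcal D_s^{-1}$, the cleanest route is the bounded inverse theorem applied to the continuous bijection $\mathcal D_s$ between Hilbert spaces. Alternatively, one can argue directly: if $y\in[H,V']_s$ and $u$ is the associated Neumann solution, then Lemma~\ref{lem-44} gives $\gotb_s(u)=\langle y,u(0)\rangle_{[H,V']_s,[H,V]_s}$, and combining coercivity of $\gotb_s$ with the continuity of the trace map $W_{1-s}(H,V)\to[H,V]_s$ from Proposition~\ref{prop-32} produces $\|u(0)\|_{[H,V]_s}\le C\|y\|_{[H,V']_s}$, i.e.\ $\|\mathcal D_s^{-1}y\|_{[H,V]_s}\le C\|y\|_{[H,V']_s}$.

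There is no serious obstacle here; the only point that requires care is bookkeeping the weight $t^{1-2s}$ when passing between the spaces $W_{1-s}(H,V)$ and $W_s(V',H)$ in the continuity estimate, and remembering that the two traces $u(0)$ and $-\lim t^{1-2s}u'(t)$ are controlled by Proposition~\ref{prop-32} applied in the two different spaces.
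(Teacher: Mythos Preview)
Your proposal is correct and uses the same ingredients as the paper: bijectivity from Theorem~\ref{theo-46}, the coercivity identity of Lemma~\ref{lem-44}, the trace bounds of Proposition~\ref{prop-32}, and the bounded inverse theorem. The only difference is the order: the paper proves continuity of $\mathcal D_s^{-1}$ directly (via your ``alternative'' argument using \eqref{45} and coercivity) and then invokes the bounded inverse theorem for $\mathcal D_s$, whereas you prove continuity of $\mathcal D_s$ directly and appeal to the bounded inverse theorem for $\mathcal D_s^{-1}$. The paper's route is slightly more economical, since your direct continuity argument for $\mathcal D_s$ requires the extra step of controlling $w=t^{1-2s}u'$ in $W_s(V',H)$ and also needs a bounded lift $x\mapsto\phi$ in the Lax--Milgram construction (available since the trace map is surjective between Hilbert spaces); by contrast, the coercivity estimate for $\mathcal D_s^{-1}$ falls out in one line from \eqref{45}.
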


\begin{proof}
It follows from Theorem \ref{theo-46} that $\mathcal D_s$ is linear and bijective. 
We show that $\mathcal D_s^{-1}$ is continuous. 
Let $y\in [H,V']_s$ and set $x:=\mathcal D_s^{-1}y$. 
Let $u$ be the $s$-harmonic function satisfying $u(0)=x$ and 
$-\lim_{t\downarrow 0}t^{1-2s}u'(t)=y$. 
Then $\gotb_s(u)=\langle y,u(0)\rangle_{[H,V']_s,[H,V]_s}$ by \eqref{45}.
By Proposition \ref{prop-32} there exists a constant $c>0$ such that 
$\|v(0)\|_{[H,V]_s}\le c\|v\|_{W_{1-s}(H,V)}$ for all $v\in W_s(H,V)$. 
Let $\mu\in (0,1]$ be a coercivity constant for $\mathcal E$.
Then
\begin{eqnarray*}
\mu\|u\|_{W_{1-s}(H,V)}^2 
&\le&\RRe \gotb_s(u)=\RRe \langle y,u(0)\rangle_{[H,V']_s,[H,V]_s}\\
&\le& \|y\|_{[H,V']_s}\|u(0)\|_{[H,V]_s}\le c\|y\|_{[H,V']_s}\|u\|_{W_{1-s}(H,V)}.
\end{eqnarray*}
Hence
\begin{align}\label{46}
\|u\|_{W_{1-s}(H,V)}\le c \mu^{-1} \|y\|_{[H,V']_s}.
\end{align}
Therefore
\begin{align*}
\|x\|_{[H,V]_s}
=\|u(0)\|_{[H,V]_s}\le c\|u\|_{W_{1-s}(H,V)}\le c^2 \mu^{-1} \|y\|_{[H,V']_s}.
\end{align*}
This shows that $\mathcal D_s^{-1}$ is continuous. 
Then also $\mathcal D_s$ is continuous, by the bounded inverse theorem.
\end{proof}

The next proposition combines several results of this section.

\begin{proposition}\label{prop-49}
The set
\begin{align*}
\mathcal H{ar_s}:=\{u\in W_{1-s}(H,V): u \;\mbox{ is s-harmonic}\}
\end{align*}
is a closed subspace of $W_{1-s}(H,V)$. 
We provide $\mathcal H{ar_s}$ with the induced norm of $W_{1-s}(H,V)$.
Then the mappings 
$u\mapsto u(0)$ from $\mathcal H{ar_s}$ into $[H,V]_s$ and 
$u\mapsto -\lim_{t\downarrow 0}t^{1-2s}u'(t)$ from $\mathcal H{ar_s}$ into $[H,V']_s$
 are both isomorphisms.
\end{proposition}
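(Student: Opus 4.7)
The plan is to proceed in three steps: establish closedness of $\mathcal{H}ar_s$, identify the Dirichlet trace as an isomorphism via the bounded inverse theorem, and then obtain the Neumann trace isomorphism by factoring it through $\mathcal{D}_s$.

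First, I would show that $\mathcal{H}ar_s$ is a closed linear subspace. Linearity is immediate from the linearity of the equation \eqref{42}. For closedness, combining Lemmas~\ref{lem-44} and~\ref{lem-45} gives the characterization
\[
\mathcal{H}ar_s = \{ u \in W_{1-s}(H,V) : \gotb_s(u,v) = 0 \text{ for all } v \in C_c^\infty((0,\infty);V) \},
\]
since for test functions $v \in C_c^\infty((0,\infty);V)$ one has $v(0) = 0$, so Lemma~\ref{lem-44} gives one inclusion and Lemma~\ref{lem-45} the other. Because $\gotb_s$ is continuous on $W_{1-s}(H,V) \times W_{1-s}(H,V)$, for each fixed test $v$ the map $u \mapsto \gotb_s(u,v)$ is a continuous antilinear functional; hence $\mathcal{H}ar_s$ is an intersection of closed hyperplanes and is itself closed. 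With the induced norm it is therefore a Hilbert space.

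Second, I would handle the trace map $\tau \colon \mathcal{H}ar_s \to [H,V]_s$, $\tau(u) = u(0)$. By Proposition~\ref{prop-32} (applied with parameter $1-s$, $X = H$, $Y = V$, so that the trace lands in $[H,V]_{1-(1-s)} = [H,V]_s$) the map $\tau$ is continuous on all of $W_{1-s}(H,V)$, hence on $\mathcal{H}ar_s$. Theorem~\ref{theo-46}\ref{theo-46-1} asserts that its restriction to $\mathcal{H}ar_s$ is bijective onto $[H,V]_s$. Since both spaces are Banach, the bounded inverse theorem yields that $\tau$ is an isomorphism.

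Third, for the Neumann trace $\nu \colon \mathcal{H}ar_s \to [H,V']_s$, $\nu(u) = -\lim_{t \downarrow 0} t^{1-2s} u'(t)$, I would observe that by Definition~\ref{def-47} we have exactly $\nu = \mathcal{D}_s \circ \tau$. Since $\tau$ is an isomorphism by step two and $\mathcal{D}_s$ is an isomorphism from $[H,V]_s$ onto $[H,V']_s$ by Proposition~\ref{proposition46}, the composition $\nu$ is an isomorphism as required. (Alternatively, one can check continuity of $\nu$ directly: on $\mathcal{H}ar_s$ the function $w := t^{1-2s}u'$ lies in $W_s(V',H)$ with norm controlled by $\|u\|_{W_{1-s}(H,V)}$, using the identity $w' = t^{1-2s}\mathcal{A}u$ and the boundedness of $\mathcal{A} \colon V \to V'$, and then Proposition~\ref{prop-32} provides the continuous trace into $[H,V']_s$; bijectivity then comes from Theorem~\ref{theo-46}\ref{theo-46-2}.)

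The main obstacle I anticipate is cleanly establishing closedness in step one; once that is in hand, the rest is a routine application of the bounded inverse theorem combined with results already proved. One must simply ensure that the functionals $u \mapsto \gotb_s(u,v)$ are continuous on $W_{1-s}(H,V)$, which follows from the continuity of $\gotb_s$ noted just after \eqref{form-bs}.
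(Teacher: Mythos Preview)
Your proposal is correct and follows essentially the same approach as the paper: closedness via Lemmas~\ref{lem-44} and~\ref{lem-45}, bijectivity from Theorem~\ref{theo-46} and Lemma~\ref{lem-44}, continuity of $\tau$ from Proposition~\ref{prop-32}, and the bounded inverse/closed graph theorem to finish. The only minor variation is that for the Neumann trace you factor $\nu = \mathcal{D}_s \circ \tau$ through Proposition~\ref{proposition46}, whereas the paper invokes the estimate~\eqref{46} directly; both arguments are equivalent, and you even sketch the paper's route in your parenthetical alternative. (Tiny slip: $u \mapsto \gotb_s(u,v)$ is linear in $u$, not antilinear, but this does not affect the argument.)
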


\begin{proof}
It follows from Lemmas \ref{lem-45} and \ref{lem-44} that $\mathcal Har_s$ is a  closed subspace. 
The surjectivity of both maps is proved in Theorem~\ref{theo-46}
and the injectivity in Lemma~\ref{lem-44}.
The continuity of the first mapping follows from Proposition \ref{prop-32}
and the continuity of the second follows from \eqref{46}. 
The continuity of the inverses is a consequence of the closed graph theorem.
\end{proof}

We conclude this section by specifying to the case $s=\frac 12$, which is much simpler.

\begin{proposition}\label{prop-48}
A function $u$ is $\frac 12$-harmonic if and only if we have that $u\in W^{2,2}((0,\infty);V')\cap L^2((0,\infty);V)$ and $-u''+\mathcal Au=0$ in $L^2((0,\infty);V')$.
\end{proposition}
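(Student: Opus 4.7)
Paragraph 1 (unpacking $s=\frac12$ and the easy direction). The first move is to observe that when $s=\frac12$ the weight $t^{1-2s}$ appearing in the definition of $s$-harmonic becomes $1$, and the factor $t^{1/2}$ in $L_2^\star$ exactly cancels the $\frac{dt}{t}$ in the Haar measure. Thus $W_{1/2}(H,V)=\{u\in L^2((0,\infty);V):u'\in L^2((0,\infty);H)\}$ and $W_{1/2}(V',H)=\{w\in L^2((0,\infty);H):w'\in L^2((0,\infty);V')\}$, so a function is $\frac12$-harmonic iff
$u\in L^2(V),\ u'\in L^2(H),\ u''\in L^2(V')$ and $-u''+\mathcal Au=0$ in $V'$ a.e. The $(\Rightarrow)$ direction is then immediate: using $V\emb H\emb V'$ one has $u\in L^2(V)\subset L^2(V')$, $u'\in L^2(H)\subset L^2(V')$, $u''\in L^2(V')$, hence $u\in W^{2,2}((0,\infty);V')\cap L^2((0,\infty);V)$; and since both sides of $-u''+\mathcal Au=0$ lie in $L^2(V')$, the identity holds there.

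Paragraph 2 (the reflection trick for $(\Leftarrow)$). For the reverse direction everything is already in place except $u'\in L^2((0,\infty);H)$, so that is the sole content. The plan is to extend $u$ from $(0,\infty)$ to all of $\mathbb R$ so that the extension $\tilde u$ still lies in $L^2(\mathbb R;V)$ and in $W^{2,2}(\mathbb R;V')$, with \emph{no} Dirac contribution at $t=0$. Define
\[
\tilde u(t)=\begin{cases} u(t), & t>0,\\ 3u(-t)-2u(-2t), & t<0.\end{cases}
\]
By Lions' lemma $u\in C([0,\infty);H)$ and, from $u'\in W^{1,2}((0,\infty);V')$, also $u'\in C([0,\infty);V')$, so both one-sided traces $u(0^+)\in H$ and $u'(0^+)\in V'$ make sense. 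A direct computation gives $\tilde u(0^-)=(3-2)u(0^+)=u(0^+)$ and $\tilde u'(0^-)=(-3+4)u'(0^+)=u'(0^+)$, i.e.\ $\tilde u$ and $\tilde u'$ are continuous at $0$ in $V'$; this is exactly what is needed for the distributional derivatives on $\mathbb R$ to coincide with the piecewise classical ones (no deltas). A change of variables then shows $\tilde u\in L^2(\mathbb R;V)$ and $\tilde u''\in L^2(\mathbb R;V')$.

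Paragraph 3 (Fourier/interpolation and the main obstacle). Once the extension is set up, the conclusion follows by the Hilbert-space-valued Plancherel theorem on $\mathbb R$. One has $\int\|\hat{\tilde u}(\xi)\|_V^2\,d\xi=\|\tilde u\|_{L^2(V)}^2$ and $\int|\xi|^4\|\hat{\tilde u}(\xi)\|_{V'}^2\,d\xi=\|\tilde u''\|_{L^2(V')}^2$; together with the interpolation inequality $\|y\|_H^2\le\|y\|_V\|y\|_{V'}$ (valid on the Gelfand triple) and Cauchy--Schwarz,
\[
\int_{\mathbb R}|\xi|^2\|\hat{\tilde u}(\xi)\|_H^2\,d\xi
\le\Bigl(\int\|\hat{\tilde u}\|_V^2\Bigr)^{1/2}\Bigl(\int|\xi|^4\|\hat{\tilde u}\|_{V'}^2\Bigr)^{1/2}
=\|\tilde u\|_{L^2(V)}\,\|\tilde u''\|_{L^2(V')}<\infty,
\]
which by Plancherel in $H$ says $\tilde u'\in L^2(\mathbb R;H)$, and in particular $u'\in L^2((0,\infty);H)$. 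The main obstacle is precisely the one the two-term reflection is built to overcome: a simple even or odd reflection picks up a Dirac mass in $\tilde u''$ proportional to $u'(0^+)$ or $u(0^+)$, which would ruin $L^2(V')$-membership and hence the Plancherel step; matching both the value and the first derivative at $0$ via the coefficients $3$ and $-2$ at scales $1$ and $2$ removes both possible singular terms simultaneously.
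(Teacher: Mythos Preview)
Your proof is correct, but it takes a different route from the paper. The paper's proof is essentially one line: it cites the intermediate derivatives theorem \cite[Chapter~1, Proposition~2.2]{LM72}, which gives directly the inclusion
\[
W^{2,2}((0,\infty);V')\cap L^2((0,\infty);V)\subset W^{1,2}((0,\infty);H),
\]
and this is exactly the missing piece $u'\in L^2((0,\infty);H)$ for the $(\Leftarrow)$ direction. Everything else (the unpacking of the weights at $s=\tfrac12$ and the easy direction) is identical to your Paragraph~1.

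What you do instead is \emph{prove} that inclusion from scratch: the Seeley-type two-term reflection to kill both possible Dirac contributions at $t=0$, followed by vector-valued Plancherel on the whole line and the Gelfand-triple interpolation inequality, is precisely one standard proof of the Lions--Magenes intermediate derivatives theorem (specialised to the relevant index). So your argument is a self-contained substitute for the citation. The advantage of your approach is that it is elementary and transparent about why the result is true; the advantage of the paper's approach is brevity, since this particular interpolation fact is classical and well documented. Two small cosmetic points: the interpolation inequality $\|y\|_H^2\le\|y\|_V\|y\|_{V'}$ holds with constant $1$ in the spectral representation used in the paper's remark, but in a general Gelfand triple one should allow a constant $C$ (this does not affect the argument); and your appeal to Lions' lemma for $u\in C([0,\infty);H)$ is stronger than needed for the gluing, since $V'$-continuity of $u$ and $u'$ already follows from $u\in W^{2,2}((0,\infty);V')\hookrightarrow C^1([0,\infty);V')$.
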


\begin{proof}
By definition $u$ is $\frac 12$-harmonic if and only if $u \in W_{\frac 12}(H,V)$, 
$u'\in W_{\frac 12}(V',H)$ and $-u''(t) + \mathcal{A} u(t) = 0$ in $V'$ 
for a.e.\ $t \in (0,\infty)$.
The latter is equivalent to 
$u \in L^2((0,\infty);V)$, $u' \in L^2((0,\infty);H)$, 
$u'' \in L^2((0,\infty);V')$ and $-u''(t) + \mathcal{A} u(t) = 0$ in $V'$ 
for a.e.\ $t \in (0,\infty)$.
By \cite[Chapter 1, Proposition 2.2]{LM72} one has the inclusion 
$W^{2,2}((0,\infty);V')\cap L^2((0,\infty);V)\subset W^{1,2}((0,\infty);H)$. 
Then the proposition follows.
\end{proof}

Proposition \ref{prop-49} has the following form if $s = \frac{1}{2}$.

\begin{theorem}\label{theo-411}
The set
\begin{align*}
\mathcal H{ar_\frac 12}=\{u\in W^{1,2}((0,\infty);H)\cap L^2((0,\infty);V): u 
\mbox{ is $\frac{1}{2}$-harmonic}\}
\end{align*}
is a closed subspace of $W^{1,2}((0,\infty);H)\cap L^2((0,\infty);V)$.
Moreover, the mappings 
$u\mapsto u(0)$ from $\mathcal H{ar_\frac 12}$ into $[H,V]_{\frac 12}$ and 
$u\mapsto -u'(0)$ from $\mathcal H{ar_\frac 12}$ into $[H,V']_{\frac 12}$
 are both isomorphisms.
 \end{theorem}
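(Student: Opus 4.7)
The plan is to deduce Theorem~\ref{theo-411} as the specialisation of Proposition~\ref{prop-49} to $s = \tfrac{1}{2}$, where the weight $t^{2s-1}$ disappears and the weighted Sobolev space $W_{1-s}(H,V)$ reduces to an ordinary (unweighted) one. The entire theorem should follow by translating the general statement through two harmless identifications.

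First I would spell out the identifications that make this specialisation transparent. With $s = \tfrac{1}{2}$ the weight $t^{2s-1}$ equals $1$, so directly from the definition of $W_s(H,V)$ one reads off
\[
W_{1/2}(H,V) = W^{1,2}((0,\infty);H) \cap L^2((0,\infty);V),
\]
and analogously $W_{1/2}(V',H) = W^{1,2}((0,\infty);V') \cap L^2((0,\infty);H)$. Moreover, the factor $t^{1-2s}$ appearing in the $s$-normal derivative is identically $1$, so $-\lim_{t \downarrow 0} t^{1-2s} u'(t)$ is simply $-u'(0)$; by Proposition~\ref{prop-32} applied with $X = V'$ and $Y = H$, this limit exists in $V'$ and lies in $[V',H]_{1/2} = [H,V']_{1/2}$.

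Next I would invoke Proposition~\ref{prop-48} to identify $\mathcal{H}ar_{1/2}$ with the set of $u \in W^{2,2}((0,\infty);V') \cap L^2((0,\infty);V)$ solving $-u'' + \mathcal{A}u = 0$; the inclusion $W^{2,2}((0,\infty);V') \cap L^2((0,\infty);V) \subset W^{1,2}((0,\infty);H)$ from \cite{LM72} already cited there shows that every such $u$ sits inside the ambient space of the theorem, namely $W^{1,2}((0,\infty);H) \cap L^2((0,\infty);V) = W_{1/2}(H,V)$.

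Finally I would apply Proposition~\ref{prop-49} directly with $s = \tfrac{1}{2}$. Under the identifications above its conclusion reads precisely as Theorem~\ref{theo-411}: $\mathcal{H}ar_{1/2}$ is a closed subspace of $W^{1,2}((0,\infty);H) \cap L^2((0,\infty);V)$, and the boundary maps $u \mapsto u(0)$ and $u \mapsto -u'(0)$ are isomorphisms onto $[H,V]_{1/2}$ and $[H,V']_{1/2}$ respectively. I do not expect any new obstacle: the substance lies entirely in Section~\ref{sec4}, and the only items to verify are the trivial collapse of the weights and the matching of the unweighted limit $-u'(0)$ with the general $s$-normal limit.
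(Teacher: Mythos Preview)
Your proposal is correct and matches the paper's approach exactly: the paper gives no separate proof of Theorem~\ref{theo-411} but simply introduces it with the sentence ``Proposition~\ref{prop-49} has the following form if $s=\frac{1}{2}$,'' relying (as you do) on the identification $W_{1/2}(H,V)=W^{1,2}((0,\infty);H)\cap L^2((0,\infty);V)$ and the collapse of $-\lim_{t\downarrow 0}t^{1-2s}u'(t)$ to $-u'(0)$. Your invocation of Proposition~\ref{prop-48} is harmless but not strictly needed, since the set $\mathcal{H}ar_{1/2}$ in the theorem is already literally the set $\mathcal{H}ar_s$ of Proposition~\ref{prop-49} for $s=\frac12$.
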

 
Denote by $A$ the part of the operator $\mathcal A$ in $H$; i.e. 
\begin{align*}
D(A)=\{x\in V,\;\mathcal Ax\in H\} \quad \mbox{and} \quad Ax=\mathcal Ax. 
\end{align*}
Then $A$ is an $m$-sectorial operator with vertex $\gamma>0$ in the sense of 
Kato \cite[Subsection~V.3.10]{Kat}. 
Moreover, $A$ has bounded imaginary powers, see for example \cite[Corollary~7.1.8]{Haase}.
Hence $[H,D(A)]_\theta = D(A^\theta)$ for all $\theta \in (0,1)$
by \cite[Theorem~1.15.3]{Tri}.

Finally, we mention the following $W^{2,2}$-regularity.
 
 \begin{proposition}\label{prop-412}
 Let $u$ be $\frac{1}{2}$-harmonic. 
Then the following assertions are equivalent.
 \begin{tabeleq}
 \item \label{prop-412-1}
$u\in W^{2,2}((0,\infty);H)$;
 
 \item  \label{prop-412-2}
$u(0)\in [H,D(A)]_{\frac 34}=D(A^{\frac 34})$;
 
 \item \label{prop-412-3}
 $u'(0)\in [H,D(A)]_{\frac 14}=[H,V]_{\frac 12}$.
 \end{tabeleq}
 \end{proposition}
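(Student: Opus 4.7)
The plan is to combine the Euler--Lagrange equation $u'' = \mathcal{A}u$ satisfied by $\frac{1}{2}$-harmonic functions (Proposition~\ref{prop-48}) with the classical Lions--Magenes trace theorem, and to invoke the uniqueness of $\frac{1}{2}$-harmonic functions from Theorem~\ref{theo-411} together with the holomorphic functional calculus for $A$ (available since $A$ has bounded imaginary powers) to recover the reverse implications.

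The first observation is that (i) is equivalent to $u \in L^2((0,\infty);D(A))$. Since $u$ is $\frac{1}{2}$-harmonic, $u'' = \mathcal{A}u$ in $L^2((0,\infty);V')$ by Proposition~\ref{prop-48}; hence $u'' \in L^2(H)$ forces $\mathcal{A}u \in L^2(H)$, which, together with $u \in L^2(V)$, forces $u \in L^2(D(A))$. The converse is immediate. Given this, the forward implications (i)\,$\Rightarrow$\,(ii) and (i)\,$\Rightarrow$\,(iii) fall out of the Lions--Magenes trace theorem \cite[Ch.~1]{LM72} applied to the pair $(H,D(A))$: for $u \in L^2((0,\infty);D(A)) \cap H^2((0,\infty);H)$ one has $u(0) \in [H,D(A)]_{3/4}$ and $u'(0) \in [H,D(A)]_{1/4}$, which give (ii) and (iii) via the identifications recalled just before the proposition.

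For the reverse implications I would use the representation of $\frac{1}{2}$-harmonic functions via the bounded holomorphic semigroup $T(t) = e^{-tA^{1/2}}$ on $H$, well-defined because $A$ is sectorial with bounded imaginary powers. Given $x \in [H,V]_{1/2}$, the function $u(t) := T(t)x$ is, by direct functional calculus, $\frac{1}{2}$-harmonic with $u(0) = x$ and hence coincides with the unique solution of the Dirichlet problem by Theorem~\ref{theo-411}. Then $u'(0) = -A^{1/2}x$, and McIntosh's square function estimate yields
\[
\int_0^\infty \|AT(t)x\|_H^2\,dt \;\simeq\; \|A^{3/4}x\|_H^2.
\]
Together with Step~1, this shows that each of (i), (ii), (iii) reduces to $x \in D(A^{3/4})$, and the three conditions are thereby equivalent.

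The main obstacle is the reverse direction: promoting regularity of the boundary datum to regularity of the solution $u$ itself. The semigroup representation together with McIntosh square function estimates is the cleanest route; a purely variational approach paralleling Theorem~\ref{theo-46} on a smaller Sobolev space is in principle possible but requires constructing a coercive form adapted to $L^2(D(A)) \cap H^2(H)$, which is less streamlined. The identification $[H,D(A)]_{1/4} = [H,V]_{1/2}$ underlying (iii) is delicate in general but is taken as given from the paragraph preceding the proposition.
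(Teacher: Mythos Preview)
Your forward implications (i)$\Rightarrow$(ii) and (i)$\Rightarrow$(iii) match the paper's argument exactly: both observe that $u''=\mathcal{A}u$ forces $u\in L^2((0,\infty);D(A))\cap W^{2,2}((0,\infty);H)$ and then invoke the Lions--Magenes trace theorem (stated as Proposition~\ref{prop-413}).

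For the reverse implications the paper takes a genuinely different route. It does \emph{not} use the semigroup $e^{-tA^{1/2}}$ here; that representation is only established at the very end of Section~\ref{sec5}, after Theorem~\ref{theo-51}. Instead the paper argues by a lift-and-solve maximal-regularity scheme: for (ii)$\Rightarrow$(i) it uses the \emph{surjectivity} part of Proposition~\ref{prop-413} to choose $\phi\in W^{2,2}((0,\infty);H)\cap L^2((0,\infty);D(A))$ with $\phi(0)=u(0)$, sets $f=-\phi''+A\phi\in L^2((0,\infty);H)$, and then applies the Dore--Venni theorem to the pair $(B^D,\mathcal{A}_2)$ (negative Dirichlet Laplacian in $t$ and pointwise $A$) to obtain $w\in D(B^D)\cap D(\mathcal{A}_2)$ solving $-w''+Aw=f$. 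Then $\tilde u=\phi-w$ is $\tfrac12$-harmonic, lies in $W^{2,2}((0,\infty);H)$, has the right trace, and uniqueness (Theorem~\ref{theo-46}) gives $u=\tilde u$. For (iii)$\Rightarrow$(i) the paper repeats this verbatim with the Neumann Laplacian $B^N$ in place of $B^D$ and a lift $\phi$ with $\phi'(0)=u'(0)$.

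Your semigroup/McIntosh route is a valid alternative, trading one deep input (Dore--Venni) for another (bounded $H^\infty$-calculus for $A$). Two caveats, though. First, placing it at this point in the paper forces you to verify independently that $t\mapsto e^{-tA^{1/2}}x$ is $\tfrac12$-harmonic for $x\in[H,V]_{1/2}$; the membership in $L^2((0,\infty);V)$ is not immediate since you cannot yet use $D(A^{1/2})=V$ or the representation formula of Section~\ref{sec5}. Second, your reduction of (iii) to (ii) via $u'(0)=-A^{1/2}x$ tacitly presupposes $x\in D(A^{1/2})$, which is essentially Theorem~\ref{theo-51} for $s=\tfrac12$; this step needs either an independent justification or should be replaced by a Neumann-type argument analogous to the Dirichlet one. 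The paper's Dore--Venni approach sidesteps both issues by never leaving the variational/maximal-regularity framework.
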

 
The proof of Proposition \ref{prop-412} is based on the following properties of traces 
\cite[Chapter 1, Theorems 3.1 and 3.2]{LM72}.

\begin{proposition}\label{prop-413}
Let $Y\emb X$, where $X,Y$ are Hilbert spaces. 
Then the mapping $u\mapsto (u(0),u'(0))$ maps
$W^{2,2}((0,\infty);X)\cap L^2((0,\infty);Y)$ into $[X,Y]_{\frac 34}\times [X,Y]_{\frac 14}$
and is surjective.
\end{proposition}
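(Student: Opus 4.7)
My plan is to prove continuity of the trace map $\tau : u \mapsto (u(0), u'(0))$ first (which implicitly includes the "maps into" claim), and then construct an explicit continuous right inverse to deduce surjectivity.

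For continuity, the crucial intermediate estimate is that every $u \in W^{2,2}((0,\infty); X) \cap L^2((0,\infty); Y)$ satisfies $u' \in L^2((0,\infty); [X,Y]_{1/2})$ with the bound
\[
\|u'\|_{L^2([X,Y]_{1/2})}^2 \le C\, \|u\|_{L^2(Y)}\, \|u''\|_{L^2(X)}.
\]
I would prove this by extending $u$ to all of $\RR$ (e.g.\ by a reflection preserving both $L^2(Y)$ and $W^{2,2}(X)$) and applying the Fourier--Plancherel transform in $t$, reducing the claim to the pointwise-in-frequency bound that follows from the complex interpolation inequality $\|z\|_{[X,Y]_{1/2}} \le \|z\|_X^{1/2}\|z\|_Y^{1/2}$ (a standard property in the Hilbert setting). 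Armed with this, I apply Proposition~\ref{prop-32} twice with $s = \tfrac{1}{2}$: to the pair $([X,Y]_{1/2}, Y)$ and the function $u$ itself, giving $u(0) \in [[X,Y]_{1/2}, Y]_{1/2}$; and to the pair $(X, [X,Y]_{1/2})$ and the function $u'$, giving $u'(0) \in [X, [X,Y]_{1/2}]_{1/2}$. The reiteration theorem for complex interpolation of Hilbert spaces collapses these to $[X,Y]_{3/4}$ and $[X,Y]_{1/4}$ respectively, and continuity of $\tau$ follows from continuity of each arrow in the chain.

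For surjectivity, I realise $Y$ as the domain of a positive self-adjoint operator $A$ on $X$, taking $A$ to be the operator associated with the continuous coercive form $(y_1, y_2) \mapsto \langle y_1, y_2\rangle_Y$ on $Y \emb X$. Then $[X,Y]_\theta = D(A^\theta)$ for every $\theta \in (0,1)$ via the functional calculus of positive self-adjoint operators. Given $x_0 \in D(A^{3/4})$ and $x_1 \in D(A^{1/4})$, I define
\[
u(t) = e^{-t A^{1/2}} \bigl( x_0 + t(x_1 + A^{1/2} x_0) \bigr), \qquad t \ge 0.
\]
Direct differentiation gives $u(0) = x_0$ and $u'(0) = -A^{1/2} x_0 + (x_1 + A^{1/2} x_0) = x_1$. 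The required integrability of $u$ in $L^2(Y) = L^2(D(A))$ and of $u''$ in $L^2(X)$ reduces, via the spectral theorem for $A$, to scalar integrals of the form $\int_0^\infty \mu^k (a+bt)^2 e^{-2t\mu}\, dt$ in the spectral parameter $\mu$; after evaluating these gamma-type integrals, the spectral integrands collapse to combinations of $\mu^3 |x_0|^2$ and $\mu |x_1|^2$, whose finite total mass is precisely the condition $x_0 \in D(A^{3/4})$ and $x_1 \in D(A^{1/4})$. This gives the sharp two-sided bound and hence continuity of the constructed right inverse.

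The main obstacle is the interpolation inequality in the continuity step: it is the one place where the argument genuinely exploits the Hilbert structure (through the complex/real interpolation coincidence and the multiplicative inequality for the interpolation norm), whereas the surjectivity step is a routine but delicate calculation once the correct spectral model $A$ for $(X,Y)$ is in place.
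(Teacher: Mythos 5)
Your argument is essentially correct, but it takes a genuinely different route from the paper: the paper does not prove Proposition~\ref{prop-413} at all, it simply quotes the trace theorems of Lions--Magenes \cite[Chapter 1, Theorems 3.1 and 3.2]{LM72}, of which this statement is the Hilbert-space, half-line, order-two special case. What you wrote is therefore a self-contained proof of the cited result. Your continuity step (extension to $\RR$, Plancherel, the pointwise-in-frequency bound coming from $\|z\|_{[X,Y]_{1/2}}\le \|z\|_X^{1/2}\|z\|_Y^{1/2}$, yielding $\|u'\|_{L^2([X,Y]_{1/2})}^2\lesssim \|u\|_{L^2(Y)}\|u''\|_{L^2(X)}$, then two applications of Proposition~\ref{prop-32} with $s=\tfrac12$ to the couples $([X,Y]_{1/2},Y)$ and $(X,[X,Y]_{1/2})$, followed by reiteration) is sound, and is close in spirit to the Lions--Magenes proof itself, which also works in a spectral model of the couple. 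The explicit lifting $u(t)=e^{-tA^{1/2}}\bigl(x_0+t(x_1+A^{1/2}x_0)\bigr)$ does give $u(0)=x_0$ and $u'(0)=x_1$, and the gamma-type integrals in the spectral parameter reduce membership of $u$ in $L^2(Y)$ and of $u''$ in $L^2(X)$ exactly to $x_0\in D(A^{3/4})$ and $x_1\in D(A^{1/4})$. What your route buys is an explicit, quantitative right inverse and independence from the reference; what the paper's citation buys is brevity and the general order-$m$ statement with all traces $u^{(j)}(0)$.

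Two small repairs are needed, neither affecting the structure. First, the operator associated with the form $(y_1,y_2)\mapsto\langle y_1,y_2\rangle_Y$ with form domain $Y$ satisfies $D(A^{1/2})=Y$, not $D(A)=Y$; for your spectral model you should take $A$ to be the square root of that operator (equivalently, by Kato's second representation theorem there is a positive self-adjoint $A$ with $D(A)=Y$, $\|Ax\|_X=\|x\|_Y$, and $A\ge c>0$ because $Y\hookrightarrow X$ continuously), after which $[X,Y]_\theta=D(A^\theta)$ as you use. Second, to conclude $u\in W^{2,2}((0,\infty);X)$ for the lifted function you should also record that $u'\in L^2((0,\infty);X)$; this follows from the same spectral computation, or from the standard intermediate-derivative inequality on the half-line once $u,u''\in L^2(X)$ are known.
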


\begin{proof}[\bf Proof of Proposition \ref{prop-412}]
Suppose \ref{prop-412-1} is valid.
Since $u''=\mathcal Au$, it follows that $\mathcal Au(t) = u''(t) \in H$
and hence $u(t) \in D(A)$ for almost all $t > 0$.
So $u\in L^2((0,\infty);D(A))\cap W^{2,2}((0,\infty);H)$.
Then the implications \ref{prop-412-1} $\Rightarrow$ \ref{prop-412-2} and 
\ref{prop-412-1} $\Rightarrow$ \ref{prop-412-3} follow directly 
from Proposition \ref{prop-413}.

\ref{prop-412-2} $\Rightarrow$ \ref{prop-412-1}.
Our proof is based on the Dore--Venni Theorem. 
We consider the negative Dirichlet Laplacian $B^D$ on $L^2((0,\infty);H)$ given by
\begin{equation*}
D(B^D)=\{w\in W^{2,2}((0,\infty);H): w(0)=0\}
\quad \mbox{and} \quad
B^Dw=-w''.
\end{equation*}
This is a selfadjoint, positive operator.
In fact it is associated with the closed form
\begin{align*}
\gotb^D(w,v)=\int_0^\infty\langle w'(t),v'(t)\rangle_H\,dt
\quad \mbox{and} \quad
D(\gotb^D)=W_0^{1,2}((0,\infty);H).
\end{align*}
The other operator in $L^2((0,\infty);H)$ which we consider is 
the operator $\mathcal A_2$ with domain 
$D(\mathcal A_2)=L^2((0,\infty);D(A))$ given by $(\mathcal A_2 w)(t) = A (w(t))$. 
Then the operators $-B^D$ and $-\mathcal A_2$ generate bounded holomorphic 
$C_0$-semigroups on $L^2((0,\infty);H)$ which commute. 
Moreover, $\mathcal A_2$ is invertible. 
It follows  from a version of the Dore--Venni Theorem \cite[Theorem~2.1]{Dore--Venni}
(see also \cite[Theorem 8.4]{Pr93} and \cite[Corollary 4.7]{Mon})
that the operator $B^D+\mathcal A_2$ with usual domain 
$D(B^D+\mathcal A_2)=D(B^D)\cap D(\mathcal A_2)$ is invertible.

By assumption we have $u(0) \in [H,D(A)]_{\frac 34}$.
Then by Proposition \ref{prop-413} there exists a 
$\phi\in W^{2,2}((0,\infty);H)\cap L^2((0,\infty);D(A))$ such that $\phi(0)= u(0)$. 
Let $f:=-\phi''+ \mathcal A_2 \phi\in L^2((0,\infty);H)$. 
Since $B^D+\mathcal A_2$ is invertible 
there exists a $w\in D(B^D)\cap D(\mathcal A_2)$ such that $-w''+\mathcal A_2 w=f$. 
Let $\tilde u=\phi-w$. 
Then $\tilde u\in W^{2,2}((0,\infty);H)\cap L^2((0,\infty);D(A))$ and
$-\tilde u''+\mathcal A_2 \tilde u=0$.
Therefore Proposition~\ref{prop-48} 
implies that $\tilde u$ is $\frac 12$-harmonic.
Moreover, $\tilde u(0) = \phi(0) = u(0)$.
Then Theorem~\ref{theo-46}\ref{theo-46-1} gives
$u = \tilde u \in W^{2,2}((0,\infty);H)$.
This proves~\ref{prop-412-1}.

\ref{prop-412-3} $\Rightarrow$ \ref{prop-412-1}.
The proof is similar, but here we consider the negative Laplacian 
with Neumann boundary conditions $B^N$ on $L^2((0,\infty);H)$; that is 
\begin{equation*}
D(B^N)=\{w\in W^{2,2}((0,\infty);H): w'(0)=0\}
\quad \mbox{and} \quad
B^Nw=-w''.
\end{equation*} 
This operator is associated with the closed form $\gotb^N$ given by
\begin{align*}
\gotb^N(w,v)=\int_0^\infty\langle w'(t),v'(t)\rangle_H\,dt
\quad \mbox{and} \quad
D(\gotb^N)=W^{1,2}((0,\infty);H).
\end{align*}
Then again by the Dore--Venni Theorem  the operator $B^N+\mathcal A_2$  with 
usual domain 
$D(B^N+\mathcal A_2)=D(B^N)\cap D(\mathcal A_2)$ is invertible, where $\mathcal A_2$ is 
as above.

By assumption we have $u'(0) \in [H,D(A)]_{\frac 14}$.
Then by Proposition \ref{prop-413} there exists a 
$\phi\in W^{2,2}((0,\infty);H)\cap L^2((0,\infty);D(A))$ such that $\phi'(0)= u'(0)$. 
Let $f:=-\phi''+\mathcal A_2 \phi\in L^2((0,\infty);H)$. 
Since $B^D+\mathcal A_2$ is surjective 
there exists a $w\in D(B^N)\cap D(\mathcal A_2)$ such that $-w''+\mathcal A_2 w=f$. 
Let $\tilde u=\phi-w$.
Then $\tilde u\in W^{2,2}((0,\infty);H)\cap L^2((0,\infty);D(A))$ and 
$-\tilde u''+\mathcal A_2 \tilde u=0$.
Moreover, $\tilde u'(0)=\phi'(0)= u'(0)$. 
Thus $\tilde u$ is the $\frac 12$-harmonic function satisfying $\tilde u'(0)=u'(0)$. 
So $u = \tilde u \in W^{2,2}((0,\infty);H)$ and we have shown \ref{prop-412-1}.
\end{proof}

We notice that Property \ref{prop-412-1} in Proposition \ref{prop-412} is interesting. 
It implies that $u''(t) \in H$ and $\mathcal Au(t) \in H$ for almost all $t > 0$.
This is a kind of maximal regularity.

\section{The fractional powers via the D-t-N operator}\label{sec5}

We adopt the notation and assumptions of Section \ref{sec4}; 
that is $V$ and $H$ are Hilbert spaces, $V\emb H$, 
the sesquilinear form $\mathcal E\colon V\times V\to\CC$ is 
continuous, coercive and $\mathcal A\in\mathcal L(V,V')$ is given by 
$\langle \mathcal Au,v\rangle_{V',V}=\mathcal E(u,v)$. 
The number $s\in (0,1)$ is fixed and $\mathcal D_s\colon[H,V]_s\to [H,V']_s$ is the 
Dirichlet-to-Neumann operator (see Definition \ref{def-47}). 
Note that $[H,V]_s\emb [H,V']_s=[H,V]_s'$. 
Let $D_s$ be the part of the operator $\mathcal D_s$ in~$H$.  
So $D_s$ is the operator in $H$ given by
\begin{align*}
D(D_s):=\{x\in [H,V]_s: \mathcal D_sx\in H\}
\quad \mbox{and} \quad
D_sx=\mathcal D_sx.
\end{align*}
Therefore the graph of $D_s$ is given by 
\begin{eqnarray*}
\graph(D_s)
= \{ (x,y)\in H\times H & : & \mbox{there exists an $s$-harmonic map $u$ such that}  \\
& & u(0) = x \mbox{ and } y=-\lim_{t\downarrow 0}t^{1-2s}u'(t) \mbox{ in } V' \}
.
\end{eqnarray*}
Recall that $A$ is the part of the operator $\mathcal A$ in $H$.
Denote by $A^s$ the fractional power of $A$. 
Our main result of this paper is the following.
Define $c_s:=2^{1-2s}\frac{\Gamma(1-s)}{\Gamma(s)}$.

\begin{theorem}\label{theo-51}
One has $c_sA^s=D_s$.
\end{theorem}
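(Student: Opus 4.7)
The plan is to exhibit, for each $x \in D(A)$, an explicit $s$-harmonic extension using the bounded holomorphic $C_0$-semigroup $(e^{-\tau A})_{\tau > 0}$ generated by $-A$ on $H$, compute its $s$-normal derivative, and identify the result with $c_s A^s x$. Since Theorem~\ref{theo-46}\ref{theo-46-1} guarantees uniqueness of the $s$-harmonic extension, this will force $\mathcal D_s x = c_s A^s x$ on $D(A)$, and the rest is an operator-theoretic closure argument.

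First, for $x \in D(A)$, I would define
\[
u(t) := \frac{t^{2s}}{4^s\,\Gamma(s)} \int_0^\infty e^{-\tau A} x \; e^{-t^2/(4\tau)}\, \tau^{-s-1}\, d\tau,
\]
the standard subordination/Poisson-type formula used in \cite{GMS13,ST10}. After the substitution $\sigma = t^2/(4\tau)$ this becomes $u(t) = \frac{1}{\Gamma(s)} \int_0^\infty e^{-(t^2/(4\sigma)) A} x \; e^{-\sigma} \sigma^{s-1}\, d\sigma$, from which strong continuity of the semigroup and dominated convergence immediately give $\lim_{t \downarrow 0} u(t) = x$ in $H$. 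I would then verify, by differentiating under the integral and using $\partial_\tau(e^{-\tau A} x\, e^{-t^2/(4\tau)}) = -A e^{-\tau A} x \, e^{-t^2/(4\tau)} + e^{-\tau A} x \cdot \frac{t^2}{4\tau^2} e^{-t^2/(4\tau)}$ followed by one integration by parts in $\tau$, that $u$ solves the Bessel-type equation \eqref{41}. Membership $u \in W_{1-s}(H,V)$ would follow from standard weighted estimates for holomorphic semigroups on $H$: the sectoriality of $A$ yields $\|t^{1-s} A^{1/2} u(t)\|_H, \|t^{1-s} u'(t)\|_H \in L^2(dt/t)$, using that $\|A^{1/2} e^{-\tau A} x\|_H \le C \tau^{-1/2} \|x\|_H$ and $\|A e^{-\tau A} x\|_H \le C \tau^{-1} \|x\|_H$.

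The crucial step is computing $-\lim_{t \downarrow 0} t^{1-2s} u'(t)$. Differentiating $u$ in $t$ and carrying out an integration by parts in $\tau$ one obtains, after elementary manipulations,
\[
-t^{1-2s} u'(t) = \frac{1}{2^{2s-1}\,\Gamma(s)} \int_0^\infty \bigl(x - e^{-\tau A} x\bigr)\, k(t,\tau)\, d\tau + R(t),
\]
where the kernel $k(t,\tau)$ concentrates as $t \downarrow 0$ and the remainder $R(t) \to 0$ for $x \in D(A)$. Passing to the limit yields a Balakrishnan-type representation
\[
-\lim_{t\downarrow 0} t^{1-2s} u'(t) = \frac{2^{1-2s}}{\Gamma(s)} \cdot s \int_0^\infty (x - e^{-\tau A} x)\, \tau^{-s-1}\, d\tau = c_s A^s x,
\]
using the classical identity $A^s x = \frac{s}{\Gamma(1-s)} \int_0^\infty (x - e^{-\tau A} x)\, \tau^{-s-1}\, d\tau$ valid for $x \in D(A)$; this is precisely where the constant $c_s = 2^{1-2s}\,\Gamma(1-s)/\Gamma(s)$ appears. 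By uniqueness in Theorem~\ref{theo-46}\ref{theo-46-1}, the $u$ above is the $s$-harmonic extension, so $\mathcal D_s x = c_s A^s x$ for all $x \in D(A)$.

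To pass from $D(A)$ to equality of operators, I would use that $D(A)$ is a core for $A^s$ (since $A$ has bounded imaginary powers and hence a bounded $H^\infty$-calculus on $H$, so $D(A)$ is dense in $D(A^s)$ in the graph norm); combined with the fact that both $c_s A^s$ and $D_s$ are closed m-sectorial operators on $H$ with the same vertex, the inclusion $c_s A^s \subseteq D_s$ deduced from agreement on the core forces equality by m-accretivity/maximality. I expect the main obstacle to be the careful integration-by-parts computation that extracts exactly the constant $c_s$ in the Balakrishnan identity; this requires justifying differentiation under the integral, controlling boundary terms at $\tau = 0$ and $\tau = \infty$, and showing the remainder vanishes. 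The verification that $u \in W_{1-s}(H,V)$ is technical but routine given the sectorial estimates, while the final closure argument is a standard consequence of the uniqueness of m-accretive extensions.
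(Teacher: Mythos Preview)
Your overall strategy coincides with the paper's: build an explicit $s$-harmonic extension of $x$ via a semigroup Poisson formula, compute its $s$-normal derivative to obtain $c_s A^s x$, and then pass from a core to all of $D(A^s)$ using that $D_s$ is $m$-sectorial (the paper's Proposition~\ref{prop-53}). The paper carries this out for $x\in D(A^2)$ with the representation $u(t)=\mathcal U(t)A^sx$ from \eqref{51}; because $A^s x$ already sits in the integrand, the limit in Proposition~\ref{prop-54}\ref{prop-54-3} is a single change of variables and a Gamma-integral, with no integration by parts and no appeal to the Balakrishnan formula. Your equivalent representation forces you to integrate by parts in~$\tau$ and then recognise the Balakrishnan identity; this works, but it is the longer route to the constant~$c_s$.

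There is one genuine slip. To show $u\in W_{1-s}(H,V)$ you invoke $\|A^{1/2}e^{-\tau A}x\|_H\le C\tau^{-1/2}\|x\|_H$ as a proxy for the $V$-norm of $u(t)$. That tacitly assumes $D(A^{1/2})\hookrightarrow V$, which is the Kato square root property and need not hold for the non-symmetric coercive forms $\mathcal E$ treated here. The paper sidesteps this by using only the safe embedding $D(A)\hookrightarrow V$, i.e.\ $\|z\|_V\le c\|Az\|_H$; for $x\in D(A)$ (or $D(A^2)$) one has $Au(t)=\frac{1}{\Gamma(s)}\int_0^\infty e^{-(t^2/4\sigma)A}(Ax)\,e^{-\sigma}\sigma^{s-1}\,d\sigma$, which is uniformly bounded in $H$ and exponentially decaying, giving $t^{1-s}u\in L_2^\star(V)$ directly. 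Replace your $A^{1/2}$ argument by this and the proof goes through.
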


We first prove that $D_s$ is $m$-sectorial.
For that we use the following result \cite[Theorem 2.1]{AtE}.

\begin{proposition}\label{prop-52}
Let $W$ be  a Hilbert space and let $\gotb\colon W\times W\to\CC$ be a continuous
sesquilinear form. 
Let $H$ be a Hilbert space and $j\colon W\to H$ be a continuous, 
linear map with dense image. 
Suppose there exist $\mu > 0$ and $\omega \in \RR$ such that 
\[
\mu \|u\|_W^2
\leq \RRe \gotb(u) + \omega \|j(u)\|_H^2
\]
for all $u \in W$.
Then there exists a unique $m$-sectorial operator $B$ on $H$ such that
\begin{eqnarray*}
\graph(B)
= \{ (x,y)\in H\times H & : & \mbox{there exists a $u \in W$ such that} \\
& & j(u) = x \mbox{ and } 
    \gotb(u,v)=\langle y,j(v)\rangle_H \mbox{ for all } v\in W \}
.
\end{eqnarray*}
\end{proposition}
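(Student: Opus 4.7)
The plan is to adapt the classical Lax--Milgram construction of an operator from a coercive form, accommodating the fact that $j$ is not assumed injective. First I would reduce to the coercive case by setting
\[
\gotb_\omega(u,v) := \gotb(u,v) + \omega \langle j(u), j(v)\rangle_H,
\]
which is continuous, and the hypothesis rewrites as $\mu\|u\|_W^2 \leq \RRe \gotb_\omega(u)$, so $\gotb_\omega$ is coercive with constant $\mu$. If the theorem holds for $\gotb_\omega$, producing an $m$-sectorial operator $B_\omega$, then $B := B_\omega - \omega I$ is $m$-sectorial and a direct form manipulation shows its graph equals the one claimed in the statement; so I may assume $\omega = 0$.

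Second, for each $f \in H$ the map $v \mapsto \langle f, j(v)\rangle_H$ is a continuous antilinear form on $W$, so by Lax--Milgram there is a unique $Gf \in W$ with
\[
\gotb(Gf, v) = \langle f, j(v)\rangle_H
\]
for all $v \in W$, and $G \in \mathcal L(H, W)$. Set $T := j \circ G \in \mathcal L(H)$. I would show $T$ is injective as follows: if $Tf = 0$ and $u := Gf$, then $j(u) = 0$, and testing the defining relation with $v = u$ gives $\gotb(u) = 0$, so coercivity forces $u = 0$; then $\langle f, j(v)\rangle_H = 0$ for all $v \in W$, and density of $j(W)$ in $H$ yields $f = 0$. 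Define $B := T^{-1}$ with domain $D(B) := T(H)$. The claimed graph description then follows from two checks: if $x = Tf \in D(B)$, the element $u := Gf$ witnesses the pair $(x, Bx) = (x, f)$; conversely, if $u \in W$ satisfies $j(u) = x$ and $\gotb(u, v) = \langle y, j(v)\rangle_H$ for all $v$, then uniqueness in Lax--Milgram gives $u = Gy$, hence $x = Ty$ and $y = Bx$.

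To close I would verify the three ingredients of $m$-sectoriality. For $x \in D(B)$ set $u := G(Bx)$, so $j(u) = x$; putting $v = u$ in the defining relation gives $\langle Bx, x\rangle_H = \gotb(u)$, so continuity and coercivity of $\gotb$ place the numerical range of $B$ inside a fixed sector. For maximality, given $g \in H$ and $\lambda > 0$ I apply Lax--Milgram to the still continuous and coercive form $\gotb(u,v) + \lambda \langle j(u), j(v)\rangle_H$ to solve $(\lambda + B)x = g$. The main subtle point I expect is density of $D(B) = T(H)$ in $H$, which I would treat by identifying the Hilbert space adjoint $T^\ast = j \circ G^\ast$, where $G^\ast$ is the Lax--Milgram solver for the adjoint form $\gotb^\ast(u,v) := \overline{\gotb(v,u)}$; since $\gotb^\ast$ enjoys the same continuity and coercivity bounds, the injectivity argument applied to $T^\ast$ gives $\ker T^\ast = \{0\}$, hence $T(H)$ is dense. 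Uniqueness of $B$ is immediate from the graph characterization.
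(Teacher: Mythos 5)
Your proposal is correct, but note that the paper does not prove Proposition~\ref{prop-52} at all: it is quoted verbatim from \cite[Theorem 2.1]{AtE}, so your argument should be compared with the standard construction behind that citation rather than with anything in this paper. Your route is the natural one and all the key steps check out: the shift $\gotb_\omega(u,v)=\gotb(u,v)+\omega\langle j(u),j(v)\rangle_H$ reduces to the coercive case, and the graph of $B_\omega-\omega I$ does coincide with the stated set because $j(u)=x$ lets you trade $\omega\langle j(u),j(v)\rangle_H$ for $\omega\langle x,j(v)\rangle_H$; the Lax--Milgram solver $G$, the operator $T=j\circ G$, its injectivity via testing with $v=Gf$ and density of $j(W)$, and the identification of the stated set with $\graph(T^{-1})$ are all sound; the numerical range estimate $|\IIm\langle Bx,x\rangle_H|\le (M/\mu)\RRe\langle Bx,x\rangle_H$ with $M$ a continuity constant gives sectoriality with vertex $0$ (hence vertex $-\omega$ after the shift back); surjectivity of $\lambda+B$ via the coercive form $\gotb+\lambda\langle j(\cdot),j(\cdot)\rangle_H$ together with accretivity yields the resolvent bound; and the adjoint identity $T^{\ast}=j\circ G^{\ast}$ for the adjoint form is verified by evaluating $\gotb(Gf,G^{\ast}g)$ in two ways, so $\ker T^{\ast}=\{0\}$ and $D(B)$ is dense. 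Two small remarks: the density step could be shortened, since any m-accretive operator on a Hilbert space is automatically densely defined (if $y\perp\operatorname{ran}(B+1)^{-1}$, apply $\langle (B+1)u,u\rangle_H=0$ with $u=(B+1)^{-1}y$ to get $u=0$, hence $y=0$); and what your construction buys over the citation is a self-contained, elementary proof using only Lax--Milgram, at the price of not covering the more general incomplete-form setting treated in \cite{AtE}, which the paper needs later (Theorem~\ref{ts601}).
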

We call the operator $B$ in Proposition~\ref{prop-52} the 
{\bf operator associated with the pair $(\gotb,j)$}.

We wish to apply Proposition~\ref{prop-52} with $W=W_{1-s}(H,V)$
and $\gotb = \gotb_s$ 
the sesquilinear form given by \eqref{form-bs}. 
Recall that 
$\gotb_s\colon W_{1-s}(H,V)\times W_{1-s}(H,V)\to\CC$ is given by
\[
\gotb_s(u,v):=\int_0^\infty \Big( \langle u'(t),v'(t)\rangle_H
       +\mathcal E(u(t),v(t)) \Big) t^{2(1-s)}\,\frac{dt}{t}
,  \]
the form $\gotb_s$ is continuous and coercive. 
Define $j\colon W_{1-s}(H,V)\to H$ by $j(u)=u(0)$. 
Note that also $j$ depends on $s$.
Then $j$ is linear, continuous with dense image (see Proposition \ref{prop-32}).

\begin{proposition}\label{prop-53}
The operator associated with $(\gotb_s,j)$ is $D_s$.
In particular, $D_s$ is $m$-sectorial.
\end{proposition}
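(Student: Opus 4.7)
The plan is to apply Proposition~\ref{prop-52} to the pair $(\gotb_s, j)$ and then verify that the abstract operator $B$ it produces has exactly the same graph as $D_s$. First I would check the hypotheses: $W_{1-s}(H,V)$ is a Hilbert space; $\gotb_s$ is a continuous sesquilinear form on it; the trace map $j(u)=u(0)$ is linear and continuous into $H$ by Proposition~\ref{prop-32}, and its image contains $[H,V]_s$, which is dense in $H$; finally, the coercivity of $\gotb_s$ gives the required estimate $\mu\|u\|_{W_{1-s}(H,V)}^2 \leq \RRe\gotb_s(u)$, so Proposition~\ref{prop-52} applies with $\omega = 0$ and produces an $m$-sectorial operator $B$ on $H$.

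Next I would show $\graph(B) \subseteq \graph(D_s)$. Take $(x,y) \in \graph(B)$ with representative $u \in W_{1-s}(H,V)$ satisfying $u(0)=x$ and $\gotb_s(u,v)=\langle y,v(0)\rangle_H$ for all $v \in W_{1-s}(H,V)$. Testing with $v \in C_c^\infty((0,\infty);V)$ yields $\gotb_s(u,v)=0$, so by Lemma~\ref{lem-45} the function $u$ is $s$-harmonic. Setting $z := -\lim_{t\downarrow 0} t^{1-2s}u'(t) \in [H,V']_s$, Lemma~\ref{lem-44} gives $\gotb_s(u,v) = \langle z,v(0)\rangle_{[H,V']_s,[H,V]_s}$ for all $v \in W_{1-s}(H,V)$. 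Comparing this with the defining property of $B$, and using that $j$ is surjective onto $[H,V]_s$ (Proposition~\ref{prop-32}), I conclude that $\langle z,\phi\rangle_{[H,V']_s,[H,V]_s} = \langle y,\phi\rangle_H$ for every $\phi \in [H,V]_s$. Since $H$ sits inside $[H,V']_s$ with the $H$-pairing extending to the duality pairing (the pivot identification), this forces $z = y$ in $[H,V']_s$; in particular $z \in H$, and $(x,y) \in \graph(D_s)$.

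For the reverse inclusion I would take $(x,y) \in \graph(D_s)$, with associated $s$-harmonic $u$ satisfying $u(0)=x$ and $-\lim_{t\downarrow 0}t^{1-2s}u'(t)=y$, noting that by definition of $D_s$ we have $y \in H$. Then Lemma~\ref{lem-44} yields
\[
\gotb_s(u,v) = \langle y, v(0)\rangle_{[H,V']_s,[H,V]_s}
\]
for all $v \in W_{1-s}(H,V)$. Since $y \in H$, the duality pairing on the right collapses to $\langle y, v(0)\rangle_H = \langle y, j(v)\rangle_H$ under the pivot identification, so $u$ realises the defining property of $B$ at $(x,y)$; hence $(x,y) \in \graph(B)$. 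Combined with uniqueness of the associated operator from Proposition~\ref{prop-52}, this gives $B = D_s$, and the $m$-sectoriality of $D_s$ is then immediate.

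The only delicate point, and the main thing to state with care, is the identification of the $[H,V']_s$--$[H,V]_s$ duality pairing with the $H$-inner product on elements of $H$: this is exactly the statement that $H$ is the pivot space in the Gelfand triple $[H,V]_s \emb H \emb [H,V']_s$, and it is the mechanism that lets the weak formulation of $B$ (which lives on $H$) see the $V'$-valued $s$-normal derivative of $u$ whenever it happens to lie in $H$.
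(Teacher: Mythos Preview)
Your proposal is correct and follows essentially the same route as the paper: verify the hypotheses of Proposition~\ref{prop-52}, then prove both graph inclusions by combining Lemma~\ref{lem-45} (to pass from the weak identity to $s$-harmonicity) with Lemma~\ref{lem-44} (to identify $\gotb_s(u,v)$ with the duality pairing against the $s$-normal derivative), using the surjectivity from Proposition~\ref{prop-32} and the pivot identification to conclude $z=y$. The only superfluous remark is the appeal to ``uniqueness of the associated operator'' at the end: once both graph inclusions are established you already have $B=D_s$.
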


\begin{proof}
Let $B$ be the operator associated with $(\gotb_s,j)$. 
Let $(x,y)\in\mbox{graph}(B)$. 
Then there exists a $u\in W_{1-s}(H,V)$ such that $u(0)=x$ and 
$\gotb_s(u,v)=\langle y,v(0)\rangle_H$ for all $v\in W_{1-s}(H,V)$. 
Choosing in particular $v\in C_c^\infty((0,\infty);V)$, Lemma \ref{lem-45} shows that 
$u$ is $s$-harmonic. 
Let $z:= \mathcal D_s x = -\lim_{t\downarrow 0}t^{1-2s}u'(t)$ in $V'$. 
Then  \eqref{44} gives
\begin{align*}
\langle y,v(0)\rangle_H=\gotb_s(u,v)=\langle z,v(0)\rangle_{[H,V']_s,[H,V]_s}
\end{align*}
 for all $v\in W_{1-s}(H,V)$. 
Consequently, $\langle y,w\rangle_H=\langle z,w\rangle_{[H,V']_s,[H,V]_s}$ for all 
$w\in [H,V]_s$ by Proposition~\ref{prop-32}. 
This implies that $z=y\in H$. 
Hence $x \in D(D_s)$ and $D_s x = B x$.
We have shown that $B\subset D_s$. 
 
 Conversely, let $x\in D(D_s)$.
Write $y = D_s x \in H$.
Let $u$ be $s$-harmonic such that $u(0)=x$.
Then 
$y=-\lim_{t\downarrow 0}t^{1-2s}u'(t)$ in $V'$. 
Let $v\in W_{1-s}(H,V)$.
Then  \eqref{44} gives
\begin{align*}
\gotb_s(u,v)=\langle y,v(0)\rangle_{[H,V']_s,[H,V]_s}=\langle y,v(0)\rangle_H ,
\end{align*}
where we have used that $y\in H$. 
Hence $x=u(0)\in D(B)$ and $Bx=y$.
This shows that $D_s\subset B$.
\end{proof}

Recall that the operator $-A$ generates a holomorphic $C_0$-semigroup 
$(e^{-tA})_{t\ge 0}$ on $H$. 
In particular, the mapping $t\mapsto e^{-tA}$ is in $C^\infty((0,\infty);\mathcal L(H))$ 
and even in $C^\infty((0,\infty);D(A^k))$ for all $k \in \NN$ if we provide $D(A^k)$ with the norm 
$\|x\|_{D(A^k)}=\|A^kx\|_H$.
Moreover, $\|e^{-tA}\|_{\mathcal L(H)}\le e^{-\mu t}$ for all $t > 0$, 
where $\mu>0$ is a coercivity constant of the form $\mathcal E$.

Define the function $\mathcal U\colon[0,\infty)\to\mathcal L(H)$ by
\begin{align}\label{51}
\mathcal U(t)=\frac{1}{\Gamma(s)}\int_0^\infty e^{-\frac{t^2}{4r}}r^s e^{-rA} \, \frac{dr}{r}.
\end{align}
Then
\[
\mathcal U\in C^\infty((0,\infty);\mathcal L(H))\cap C([0,\infty);\mathcal L(H)).
\]
The function $\mathcal U$ has the following properties.

\begin{proposition}\label{prop-54}
The following assertions hold.
\begin{tabel}
\item \label{prop-54-1}
If $t > 0$, then $\mathcal U(t)x \in D(A)$ for all $x \in H$ and 
\begin{align}\label{53}
\mathcal U''(t)+\frac{1-2s}{t}\mathcal U'(t)=A \, \mathcal U(t).
\end{align}

\item \label{prop-54-2} 
$\mathcal U(0)=A^{-s}$.

\item \label{prop-54-3} 
Let $x\in D(A^s)$.
Define $u \in C^\infty((0,\infty);H)$ by $u(t):=\mathcal U(t)A^sx$.
Then 
\begin{align*}
\lim_{t\downarrow 0}-t^{1-2s}u'(t)=c_sA^s x
\end{align*}
in $H$.

\item \label{prop-54-4} 
There exist $\delta \in (0,\mu)$ and $M\ge 0$ such that 
$\|\mathcal U(t)\|_{\mathcal L(H)}\le Me^{-\delta t}$ for all $t>0$.

\item \label{prop-54-5}
Let $x\in D(A^2)$.
Then $\mathcal U(\cdot)A^sx$ is $s$-harmonic.
\end{tabel}
\end{proposition}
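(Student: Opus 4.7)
The plan is to treat the five assertions sequentially; (ii)--(iv) are direct computations on \eqref{51}, (i) is a differentiation plus integration by parts in the radial variable $r$, and (v) combines these with an energy identity. The main difficulty will lie in (v): executing the energy identity cleanly and establishing an auxiliary exponential decay of $\mathcal{U}'(t)$ needed to kill the boundary term at infinity.

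For (i), I would differentiate \eqref{51} twice under the integral sign (justified by the rapid decay of $e^{-t^2/(4r)}$ at $r=0$ and the exponential decay of $e^{-rA}$ at $r=\infty$). A short calculation gives $(\partial_t^2 + \tfrac{1-2s}{t}\partial_t)e^{-t^2/(4r)} = (\tfrac{t^2}{4r^2} - \tfrac{1-s}{r}) e^{-t^2/(4r)}$. Separately, writing $Ae^{-rA} = -\tfrac{d}{dr}e^{-rA}$ and integrating by parts in $r$ against $e^{-t^2/(4r)} r^{s-1}$ yields the same integrand for $A\mathcal{U}(t)$; the boundary terms at $0$ and $\infty$ both vanish ($e^{-t^2/(4r)}$ vanishes to infinite order at $0$, and $r^{s-1}\to 0$ at $\infty$ since $s<1$). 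This proves (5.3) and shows $\mathcal{U}(t)x\in D(A)$. Assertion (ii) is immediate upon setting $t=0$ and using $A^{-s} = \tfrac{1}{\Gamma(s)}\int_0^\infty r^{s-1} e^{-rA}\,dr$.

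For (iii), differentiating the integral gives $\mathcal{U}'(t) = -\tfrac{t}{2\Gamma(s)}\int_0^\infty e^{-t^2/(4r)} r^{s-2} e^{-rA}\,dr$, and the substitution $\sigma = t^2/(4r)$ recasts this as
\[
-t^{1-2s}\mathcal{U}'(t) = \frac{1}{2^{2s-1}\Gamma(s)}\int_0^\infty e^{-\sigma}\sigma^{-s} e^{-t^2 A/(4\sigma)}\, d\sigma .
\]
Dominated convergence (with $\|e^{-t^2 A/(4\sigma)}\|_{\mathcal{L}(H)} \leq 1$ and $e^{-\sigma}\sigma^{-s}$ integrable on $(0,\infty)$ since $s \in (0,1)$) passes $t\downarrow 0$ under the integral, producing the limit $c_s I$, hence $c_s A^s x$ when applied to $A^s x$. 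For (iv), starting from $\|e^{-rA}\|_{\mathcal{L}(H)}\leq e^{-\mu r}$, I would choose $\varepsilon\in (0,\mu^2)$, split $\mu r = (\mu-\varepsilon)r + \varepsilon r$, and apply the AM-GM inequality $\tfrac{t^2}{4r} + \varepsilon r \geq t\sqrt{\varepsilon}$ to factor out $e^{-t\sqrt{\varepsilon}}$; the remaining integral equals $\Gamma(s)(\mu-\varepsilon)^{-s}$, and $\delta := \sqrt{\varepsilon}\in (0,\mu)$ works.

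For (v), set $y := A^s x$; since $x \in D(A^2)$ and $2-s>1$ we have $y = A^{-(2-s)}(A^2 x)\in D(A^{2-s}) \subset D(A)$, so $u(t) = \mathcal{U}(t)y\in D(A)$ with $Au(t) = \mathcal{U}(t)Ay$, and the PDE is (5.3) applied to $y$. Applying (iv) to both $y$ and $Ay$ gives exponential decay of $\|u(t)\|_H$ and $\|Au(t)\|_H$; coupled with coercivity $\mu\|u(t)\|_V^2 \leq \RRe\,\mathcal{E}(u(t))\leq\|Au(t)\|_H\|u(t)\|_H$, this produces $\|u(t)\|_V$ bounded with exponential decay, so $t^{1-s} u \in L_2^*(V)$ (integrability at $0$ uses $1 - 2s > -1$). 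For $t^{1-s} u' \in L_2^*(H)$, I would run the energy identity: pairing the PDE with $u$ in $H$ and integrating by parts over $(\varepsilon, T)$ produces
\[
\RRe\bigl[t^{1-2s}\langle u'(t), u(t)\rangle_H\bigr]_\varepsilon^T = \int_\varepsilon^T t^{1-2s}\bigl(\|u'(t)\|_H^2 + \RRe\,\mathcal{E}(u(t))\bigr)\,dt .
\]
As $\varepsilon\downarrow 0$, the left boundary term tends to $-c_s\RRe\langle A^s x, x\rangle_H$ by (iii) combined with $u(0) = A^{-s}y = x$; as $T\to\infty$ the right boundary term vanishes by combining (iv) for $u(T)$ with an analogous exponential decay bound for $\mathcal{U}'(t)$, obtained by the same AM-GM trick applied to the representation of $\mathcal{U}'$ from (iii). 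Passing to the limit yields $\int_0^\infty t^{1-2s}(\|u'\|_H^2 + \RRe\,\mathcal{E}(u))\,dt = c_s\,\RRe\langle A^s x, x\rangle_H<\infty$, which delivers $t^{1-s} u' \in L_2^*(H)$. The remaining condition $t^s(t^{1-2s} u')' = t^{1-s}\mathcal{A} u \in L_2^*(V')$ is immediate from the boundedness of $\mathcal{A}\colon V \to V'$, so $t^{1-2s} u' \in W_s(V',H)$ and $u$ is $s$-harmonic.
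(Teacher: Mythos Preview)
Your argument is correct throughout. Parts (i)--(iv) match the paper's approach closely; your substitution in (iii) is the inverse of the paper's (they set $\tau=r/t^2$, you set $\sigma=t^2/(4r)$), and your AM--GM estimate in (iv) is a cleaner alternative to the paper's splitting of the integral at $r=t$ (a cosmetic point: you need $\varepsilon<\min(\mu,\mu^2)$ rather than just $\varepsilon<\mu^2$ when $\mu>1$).

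The genuine difference is in (v). The paper establishes $t^{1-s}u'\in L_2^\star(H)$ by direct pointwise estimation: near $t=0$ it uses (iii) to get $\|t^{1-2s}u'(t)\|_H\le c$, hence $\int_0^1 t^{1-2s}\|u'(t)\|_H^2\,dt\le c^2\int_0^1 t^{2s-1}\,dt<\infty$; for $t\ge 1$ it bounds the integral representation (5.4) by splitting at $r=t$ to extract exponential decay. Your energy identity is a different and self-contained route: it yields the finite value $\int_0^\infty t^{1-2s}(\|u'\|_H^2+\RRe\,\mathcal E(u))\,dt=c_s\,\RRe\langle A^sx,x\rangle_H$ in one stroke, giving \emph{both} $t^{1-s}u'\in L_2^\star(H)$ and (via coercivity) $t^{1-s}u\in L_2^\star(V)$ simultaneously --- so your separate verification of the $V$-bound beforehand is in fact redundant. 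Two small simplifications are available in your argument: for the boundary term at $T\to\infty$ you do not need exponential decay of $\mathcal U'$; the uniform bound $\|t^{1-2s}u'(t)\|_H\le c_s\|A^sx\|_H$ already implicit in your display from (iii), combined with (iv) for $u(T)$, gives $|T^{1-2s}\langle u'(T),u(T)\rangle_H|\le c_sM e^{-\delta T}\|A^sx\|_H^2\to 0$ directly. Finally, rather than verifying $t^{1-2s}u'\in W_s(V',H)$ by hand, the paper simply invokes Lemma~\ref{lem-45}, which packages that verification once $u\in W_{1-s}(H,V)$ and the weak equation against $C_c^\infty((0,\infty);V)$ are known.
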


\begin{proof}
\ref{prop-54-1}. 
Since $(e^{-t A})_{t > 0}$ is a holomorphic semigroup there 
exists a constant $c_1 > 0$ such that $\|Ae^{-tA}\|_{\mathcal L(H)}\le c_1t^{-1}$ for all $t \in (0,1]$.
If $t \in (1,\infty)$, then the semigroup property gives
$\|Ae^{-tA}\|_{\mathcal L(H)} 
\le \|Ae^{-A}\|_{\mathcal L(H)} \, \|e^{-(t-1)A}\|_{\mathcal L(H)}
\leq c_1 e^{-\mu(t-1)}$.
Hence $\mathcal U(t)H\subset D(A)$ for all $t>0$.
It is straightforward  to verify the identity \eqref{53}.

\ref{prop-54-2}.
This part follows from \cite[(3.56)]{ABHN11}.

\ref{prop-54-3}. 
Let $t > 0$.
One has 
\begin{align}\label{54}
u'(t)=-\frac{1}{2 \Gamma(s)}\int_0^\infty e^{-\frac{t^2}{4r}} t r^{s-1} e^{-rA}A^sx \, \frac{dr}{r}.
\end{align}
Substituting $\tau=\frac{r}{t^2}$ gives
\begin{align*}
-t^{1-2s}u'(t)
=\frac{1}{2\Gamma(s)}\int_0^\infty e^{-\frac{1}{4\tau}}\tau^{s-1} e^{-t^2\tau A}A^sx \, \frac{d\tau}{\tau}.
\end{align*}
Hence $\lim_{t\downarrow 0}-t^{1-2s}u'(t)=c_sA^sx$, since
\begin{align*}
\frac{1}{2\Gamma(s)}\int_0^\infty e^{-\frac{1}{4\tau}}\tau^{s-1} \, \frac{d\tau}{\tau}
=\frac{1}{2\Gamma(s)}\int_0^\infty e^{-r}r^{1-s}4^{1-s} \, \frac{dr}{r}
=2^{1-2s}\frac{\Gamma(1-s)}{\Gamma(s)}
= c_s.
\end{align*}

\ref{prop-54-4}. 
Using the fact that $\|e^{-tA}\|_{\mathcal L(H)}\le e^{-\mu t}$ for all $t\ge 0$, 
one obtains that
\begin{eqnarray*}
\|\mathcal U(t)\|_{\mathcal L(H)}
& \le &\frac{1}{\Gamma(s)} \int_0^te^{-\mu r}e^{-\frac{t^2}{4r}}r^{s-1} \, dr+
   \frac{1}{\Gamma(s)} \int_t^\infty e^{-\mu r}e^{-\frac{t^2}{4r}}r^{s-1} \, dr  \\
& \le & \frac{e^{-\frac{t^2}{4t}}}{\Gamma(s)} \int_0^te^{-\mu r}r^{s-1} \, dr+
  \frac{e^{-\frac{\mu}{2}t}}{\Gamma(s)} \int_t^\infty e^{-\frac{\mu}{2} r}r^{s-1} \, dr \\
& \le & \mu^{-s} e^{-\frac t4}+ (2 \mu^{-1})^s e^{-\frac{\mu}{2}t}
\end{eqnarray*}
for all $t>0$.

\ref{prop-54-5}.
Write $u = \mathcal U(\cdot)A^sx$.
First we shall prove that $t^{1-s}u \in L_2^\star(V)$.
Observe that $D(A)\hookrightarrow V$.
Hence there exists a constant $c>0$ such that $\|z\|_V\le c\|Az\|_H$ for all $z\in D(A)$.
Since $x\in D(A^2)$ and $A \, \mathcal U(t)A^sx=\mathcal U(t)A^{1+s}x$, it suffices to show that 
\begin{align*}
\int_0^\infty t^{2(1-s)}\|\mathcal U(t)z\|_H^2 \, \frac{dt}{t}<\infty 
\end{align*}
for all $z \in H$.
From part \ref{prop-54-4} we easily see that 
\begin{align*}
\int_1^\infty t^{2(1-s)}\|\mathcal U(t)z\|_H^2 \, \frac{dt}{t}<\infty
\quad \mbox{and} \quad 
\int_0^1 t^{2(1-s)}\|\mathcal U(t)z\|_H^2 \, \frac{dt}{t}<\infty .
\end{align*}
Hence $t^{1-s}u \in L_2^\star(V)$.

Secondly, we show that $t^{1-s} u' \in L_2^\star(H)$.
In order to prove this we first
show that $\int_0^1 t^{2(1-s)}\|u'(t)\|_H^2 \, \frac{dt}{t} <\infty$.
In fact, by Statement~\ref{prop-54-3} there exists a constant $c>0$ such that 
$ t^{1-2s}\|u'(t)\|_H\le c$ for all $t \in (0,1]$.
Hence 
\begin{align*}
\int_0^1 t^{2(1-s)}\|u'(t)\|_H^2 \, \frac{dt}{t}
\le c^2 \int_0^1 t^{2s} \, \frac{dt}{t} < \infty.
\end{align*}
It remains to show that
\begin{align}\label{55}
\int_1^\infty t^{2(1-s)} \|u'(t)\|_H^2\, \frac{dt}{t} <\infty.
\end{align}
We shall use \eqref{54}.
Let $t \geq 1$.
Observe that 
\begin{eqnarray}
\int_0^\infty e^{-\frac{t^2}{4r}} t r^{s-1} e^{-r\mu} \, \frac{dr}{r}
&\leq& e^{-\frac t8} \int_0^t e^{-\frac{t^2}{8r}}
       t r^{s-1} e^{-r\mu} \, \frac{dr}{r} 
   + e^{-\frac{t\mu}{2}} \int_t^\infty 
   e^{-\frac{t^2}{4r}} t r^{s-1} e^{-\frac{r\mu}{2}} \, \frac{dr}{r} \nonumber   \\
& \le & e^{-\frac t8} t \int_0^\infty e^{-\frac{1}{8r}}
    r^{s-2} e^{-r\mu} \, dr  
    +t e^{-\frac{t\mu}{2}} \int_1^\infty 
     r^{s-2} e^{-\frac{r\mu}{2}} \, dr  \nonumber   \\
& \leq & ce^{-\varepsilon t} \nonumber
\end{eqnarray}
for a suitable constant $c>0$, where $\varepsilon = \frac{1}{9} \wedge \frac{\mu}{4}$.
Hence it follows from \eqref{54} that 
\[
 \|u'(t)\|_H
\le \frac{c}{2 \Gamma(s)}\|A^sx\|_H e^{-\varepsilon t}
\]
and the estimate \eqref{55} is valid.
So $t^{1-s} u' \in L_2^*(H)$.
Therefore $u \in W_{1-s}(H,V)$.

Finally it follows from Statement~\ref{prop-54-1} that 
$\gotb_s(u,v) = 0$ for all $v \in C_c^\infty((0,\infty);V)$.
Hence $u$ is $s$-harmonic by Lemma~\ref{lem-45}.
\end{proof}

Now we are able to prove Theorem \ref{theo-51}.

\begin{proof}[\bf Proof of Theorem \ref{theo-51}]
Let $x\in D(A^2)$.
We shall show that $x\in D(D_s)$ and $D_sx=c_sA^sx$.
In fact, by Proposition \ref{prop-54}(e) the function 
$u(\cdot):=\mathcal U(\cdot)A^sx$ is $s$-harmonic and $u(0)=x$ by Proposition \ref{prop-54}(b).
Moreover, $c_sA^sx=-\lim_{t\downarrow 0}t^{1-2s}u'(t)$ by Proposition \ref{prop-54}(c).
Thus, by the definition of $D_s$, one has $x\in D(D_s)$ and $D_sx=c_sA^sx$.
Since $D(A^2)$ is a core of $D(A^s)$ and $D_s$ is closed 
(as the operator $D_s$ is $m$-sectorial by Proposition \ref{prop-53}), 
it follows that $c_sA^s\subset D_s$.
Because $c_s A^s$ is $m$-sectorial and $D_s$ is sectorial 
one concludes that $c_sA^s=D_s$.
\end{proof}

Theorem \ref{theo-51} has the following corollary.

\begin{corollary}
Let $u$ be $s$-harmonic, $x=u(0)$ and $y=-\lim_{t\downarrow 0}t^{1-2s}u'(t)$ in~$V'$.
Then $x\in D(A^s)$ if and only if $y\in H$.
\end{corollary}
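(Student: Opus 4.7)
The plan is to unpack the definitions of $\mathcal{D}_s$ and its part $D_s$ in $H$, and then invoke Theorem~\ref{theo-51}. There is essentially no obstacle: the statement is a direct translation of the theorem into information about the boundary data $x$ and the $s$-normal derivative $y$.

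First I would observe that, by assumption, $u$ is $s$-harmonic with $u(0)=x$ and $-\lim_{t\downarrow 0}t^{1-2s}u'(t)=y$ in $V'$. By Proposition~\ref{prop-32}, automatically $x\in [H,V]_s$ and $y\in [H,V']_s$. Now Definition~\ref{def-47} of the Dirichlet-to-Neumann operator gives exactly $\mathcal{D}_s x=y$ in $[H,V']_s$, since $u$ is the (unique, by Theorem~\ref{theo-46}) $s$-harmonic function with $u(0)=x$.

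Next I would use the definition of the part $D_s$ of $\mathcal{D}_s$ in $H$, given at the start of Section~\ref{sec5}: namely $x\in D(D_s)$ precisely when $\mathcal{D}_s x\in H$, and in that case $D_s x=\mathcal{D}_s x$. Combined with the previous step, this says $y\in H$ if and only if $x\in D(D_s)$.

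Finally I would apply Theorem~\ref{theo-51}, which gives $D_s=c_s A^s$. Since $c_s=2^{1-2s}\Gamma(1-s)/\Gamma(s)\neq 0$, we have $D(D_s)=D(c_s A^s)=D(A^s)$. Hence $y\in H$ if and only if $x\in D(A^s)$, which is the desired equivalence. (If one also wants the numerical relation, the same argument yields $y=c_s A^s x$ in that case.)
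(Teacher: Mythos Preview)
Your proof is correct and is exactly the argument the paper has in mind: the corollary is stated without proof as an immediate consequence of Theorem~\ref{theo-51}, and you have simply spelled out the unwinding of Definition~\ref{def-47} and the definition of $D_s$ that makes the deduction explicit.
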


Moreover Proposition~\ref{prop-54}(e) extends to the following representation formula.

\begin{corollary}\label{cor-57}
Let $u$ be $s$-harmonic.
Then
\begin{align}\label{56}
u(t)=\frac{1}{\Gamma(s)}\int_0^\infty e^{-\frac{t^2}{4r}}r^s A^se^{-rA}x \, \frac{dr}{r}
\end{align}
for all $t>0$, where $x = u(0)$.
In particular, $u\in C^\infty((0,\infty);H)$.
Stronger, $u\in C^\infty((0,\infty);D(A^k))$ for all $k\in \NN$.
\end{corollary}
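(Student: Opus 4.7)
The plan is to first establish \eqref{56} for the dense class $u(0) \in D(A^2)$ by combining Proposition~\ref{prop-54} with uniqueness of the Dirichlet problem, and then to extend by density, arguing that the right-hand side is a smooth representative of $u$.

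First I would treat the case $x := u(0) \in D(A^2)$. By Proposition~\ref{prop-54}\ref{prop-54-5} the function $t \mapsto \mathcal{U}(t) A^s x$ is $s$-harmonic, and Proposition~\ref{prop-54}\ref{prop-54-2} gives its value at $t = 0$ as $A^{-s} A^s x = x$. Since $A^s$ commutes with each $e^{-rA}$ on $D(A^s) \supset D(A^2)$, pulling $A^s$ inside the integral \eqref{51} shows that $\mathcal{U}(t) A^s x$ is precisely the right-hand side of \eqref{56}. The uniqueness in Theorem~\ref{theo-46}\ref{theo-46-1} then forces this expression to coincide with $u(t)$ for all $t > 0$.

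To extend to general $x \in [H,V]_s$, I would define for each $t > 0$ the operator $\mathcal{V}(t) \colon H \to H$ by
\[
\mathcal{V}(t) z := \frac{1}{\Gamma(s)} \int_0^\infty e^{-\frac{t^2}{4r}} r^s A^s e^{-rA} z \, \frac{dr}{r}.
\]
Standard sectorial estimates yield $\|A^s e^{-rA}\|_{\mathcal{L}(H)} \leq C r^{-s}$ on $(0,1]$ together with exponential decay on $[1,\infty)$, and the Gaussian $e^{-t^2/(4r)}$ handles the $r \downarrow 0$ singularity; analogous bounds on $\|A^{k+s} e^{-rA}\|_{\mathcal{L}(H)}$ allow differentiation under the integral sign and show that $\mathcal{V}(\cdot) z \in C^\infty((0,\infty); D(A^k))$ for every $z \in H$ and every $k \in \NN$. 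Since $D(A^2)$ is dense in $V$ (hence, by interpolation, in $[H,V]_s$), I choose $x_n \in D(A^2)$ with $x_n \to x$ in $[H,V]_s$, and let $u_n$ denote the $s$-harmonic function with $u_n(0) = x_n$. By the first step $u_n(t) = \mathcal{V}(t) x_n$ for every $t > 0$, while Proposition~\ref{prop-49} gives $u_n \to u$ in $W_{1-s}(H,V)$, so along a subsequence $u_n(t) \to u(t)$ in $V$ for almost every $t > 0$, and simultaneously $\mathcal{V}(t) x_n \to \mathcal{V}(t) x$ in $H$ for every $t > 0$. Comparing gives $u(t) = \mathcal{V}(t) x$ for almost every $t > 0$, and since the right-hand side is smooth one may take $\mathcal{V}(\cdot) x$ as the canonical representative of $u$, which simultaneously yields the representation \eqref{56} and the $C^\infty$ regularity into each $D(A^k)$.

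The main obstacle is the final identification: $W_{1-s}(H,V)$-convergence does not by itself provide continuous point-evaluation at interior points $t > 0$, so one must extract an almost-everywhere convergent subsequence and then rely on the already-established smoothness of $\mathcal{V}(\cdot) x$ to pin down the canonical smooth representative of $u$. Everything else reduces to routine estimates on the holomorphic semigroup $(e^{-rA})_{r>0}$ and to invoking the uniqueness and density statements already available in Sections~\ref{sec3} and \ref{sec4}.
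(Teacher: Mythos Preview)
Your argument is correct and follows the same density strategy as the paper: establish \eqref{56} for $x\in D(A^2)$ via Proposition~\ref{prop-54}\ref{prop-54-5},\ref{prop-54-2} and uniqueness in Theorem~\ref{theo-46}\ref{theo-46-1}, then approximate a general $x\in [H,V]_s$ and pass to the limit using Proposition~\ref{prop-49}. The only point where the paper is cleaner is precisely the step you flag as the main obstacle: instead of extracting an a.e.\ convergent subsequence, the paper observes that $W_{1-s}(H,V)\subset C((0,\infty);H)$ (since $u'\in L^1_{\loc}((0,\infty);H)$ by Cauchy--Schwarz) and invokes the closed graph theorem to conclude that point-evaluation at each $t>0$ is continuous, so $u_n(t)\to u(t)$ in $H$ for \emph{every} $t>0$ directly.
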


\begin{proof}
Note that $x \in [H,V]_s$ by Proposition~\ref{prop-32}.
Since first $D(A^2)$ is a core for $A$, 
secondly the domain $D(A)$ with graph norm
is densely and continuously embedded in $V$ and 
thirdly $V$ is densely and continuously embedded in $[H,V]_s$, it follows 
that $D(A^2)$ is dense in $[H,V]_s$.
Hence there exists a sequence $(x_n)_{n \in \NN}$ in $D(A^2)$
such that $x_n \to x$ in $[H,V]_s$.
Now it follows from the second statement in Proposition \ref{prop-49} that $u_n\to u$ in 
$W_{1-s}(H,V)$, where $u_n$ is the $s$-harmonic function satisfying $u_n(0)=x_n$ 
for all $n \in \NN$.
Since $W_{1-s}(H,V)\subset C((0,\infty);H)$, 
the closed graph theorem implies that $u_n(t)\to u(t)$ in $H$ as $n\to\infty$ for all $t>0$.
We know from Proposition \ref{prop-54}\ref{prop-54-5} and \ref{prop-54-2} that
\begin{align*}
u_n(t)=\frac{1}{\Gamma(s)}\int_0^\infty e^{-\frac{t^2}{4r}}r^s e^{-rA}A^sx_n \, \frac{dr}{r}
\end{align*}
for all $t > 0$.
Note that there exists a constant $c > 0$ such that 
$\|A^se^{-rA} y\|_{H}\le \frac{c}{r^s} \|y\|_H$ for all $r \in (0,1]$
and $y \in H$.
Letting $n\to\infty$, we get \eqref{56} by Lebesgue's Theorem.
\end{proof}

As a consequence of these results, each $s$-harmonic function is a 
classical solution of the equation
\begin{align*}
u''(t)+\frac{1-2s}{t}u'(t)-Au(t)=0, \quad t \in (0,\infty).
\end{align*}

Now we want to rephrase the results for a concrete operator $A$.

\begin{example}[\bf Fractional power of the Dirichlet Laplacian]
Let $\Omega\subset\RR^N$ be a bounded, open set and let 
$A$ be the negative Dirichlet Laplacian on $L^2(\Omega)$, that is 
\begin{align*}
D(A)=\{u\in W_0^{1,2}(\Omega): \Delta u\in L^2(\Omega)\}
\quad \mbox{and} \quad
 Au=-\Delta u.
\end{align*}
Then $A$ is associated with the classical Dirichlet form 
$\mathcal E\colon W_0^{1,2}(\Omega)\times W_0^{1,2}(\Omega)\to \CC$ given by
\begin{align*}
\mathcal E(u,v)=\int_{\Omega}(\nabla u)\cdot(\overline{\nabla v}).
\end{align*}
It follows from Corollary \ref{cor-57} that each $s$-harmonic function can be 
identified with a function in $C^\infty((0,\infty)\times\Omega)$.
More precisely, let $u\in C^\infty((0,\infty)\times\Omega)$.
Then $u$ is $s$-harmonic if and only if 
\[
\begin{array}{l}
u(t,\cdot)\in W_0^{1,2}(\Omega) \mbox{ for all } t>0,  \\[5pt]
\displaystyle \int_0^\infty\int_\Omega 
   \Big( |\nabla_xu(t,x)|^2 + |\partial_t u(t,x)|^2 + |u(t,x)|^2
   \Big) \,dx \, t^{2(1-s)} \frac{dt}{t}  <\infty, \mbox{ and}  \\[5pt]
\displaystyle \partial_t^2 u(t,x) +\frac{1-2s}{t} \partial_tu
   +\Delta_x u(t,x)=0 \mbox{ for all } (t,x) \in (0,\infty)\times\Omega.
\end{array}
\]
Let $u$ be an $s$-harmonic function.
Then $u(0,\cdot):=\lim_{t\downarrow 0}u(t,\cdot)$ exists in 
$L^2(\Omega)$ and is an element of $[L^2(\Omega),W_0^{1,2}(\Omega)]_s$. 
Also $w:=-\lim_{t\downarrow 0} t^{1-2s} \partial_t u(t,\cdot)$ exists in 
$W^{-1,2}(\Omega):=(W_0^{1,2}(\Omega))'$ and is an element of 
$[L^2(\Omega), W^{-1,2}(\Omega)]_s$.
Conversely, for each $u_0\in [L^2(\Omega),W_0^{1,2}(\Omega)]_s$ there exists a unique 
$s$-harmonic function $u$ such that $u(0,\cdot)=u_0$.
One has $u_0\in D(A^s)$ if and only if  
$w:=-\lim_{t\downarrow 0}t^{1-2s} \partial_t u(t,\cdot)$ exists in $W^{-1,2}(\Omega)$
and $w \in L^2(\Omega)$.
In that case $c_sA^su_0=w$.
This is Theorem \ref{theo-51} rephrased for the Dirichlet-Laplacian.
\end{example}

Finally we specify the results for $s=\frac 12$.
Let $x\in [H,V]_{\frac 12}$.
By Theorem \ref{theo-46}(a) there is a unique $\frac 12$-harmonic function $u$ 
such that $u(0)=x$.
Then $y:=-u'(0)\in [H,V']_{\frac 12}$.
Then by definition $\mathcal D_{\frac 12}x=y$.
Moreover, $\mathcal D_{\frac 12}\colon[H,V]_{\frac 12}\to [H,V']_{\frac 12}$
is an isomorphism by Proposition~\ref{proposition46}.
Also $\mathcal D_{\frac 12}x\in H$ if and only if $x\in D(A^{\frac 12})$
by Theorem~\ref{theo-51}.
In that case $\mathcal D_{\frac 12}x=A^{\frac 12}x$, since $c_{\frac 12}=1$.

\begin{proposition}
Let $x\in D(A^{\frac 12})$.
Then the unique $\frac 12$-harmonic function $u$ satisfying $u(0)=x$ 
satisfies $u(t)=e^{-t A^{\frac 12}}x$ for all $t > 0$.
Hence
\begin{align}\label{58}
e^{-tA^{\frac 12}}x=\mathcal U(t)A^{\frac 12}x
\end{align}
for all $t > 0$.
\end{proposition}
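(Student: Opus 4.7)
The plan is to verify that $v(t) := e^{-tA^{1/2}}x$ is itself the unique $\tfrac12$-harmonic function with $v(0) = x$, and then to deduce \eqref{58} from the representation formula of Corollary~\ref{cor-57}.

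Since $A$ is coercive m-sectorial, $A^{1/2}$ is m-sectorial and invertible, and $-A^{1/2}$ generates an exponentially stable holomorphic $C_0$-semigroup on $H$; in particular strong continuity gives $v(0) = x$. For $t > 0$, holomorphy yields $v(t) \in D(A^k)$ for every $k \in \NN$, and the functional calculus produces $v'(t) = -A^{1/2} v(t)$ and $v''(t) = A v(t) = \mathcal A v(t)$, so $-v'' + \mathcal A v = 0$ in $H \subset V'$. To apply Proposition~\ref{prop-48} it remains to establish $v \in L^2((0,\infty); V)$ and $v'' \in L^2((0,\infty); V')$. Testing the coercivity inequality $\mu \|v(t)\|_V^2 \le \RRe \mathcal E(v(t)) = \RRe \langle v''(t), v(t) \rangle_H$ against $dt$ on $(0,T)$, integrating by parts, and letting $T \to \infty$ (the boundary term $\langle v'(T), v(T) \rangle_H$ vanishes by exponential decay), I expect the bound
\[
\mu \int_0^\infty \|v(t)\|_V^2 \, dt + \int_0^\infty \|v'(t)\|_H^2 \, dt \le \RRe \langle A^{1/2} x, x \rangle_H \le \|A^{1/2} x\|_H \|x\|_H ,
\]
which gives $v \in L^2((0,\infty); V)$ and, via $\mathcal A \in \mathcal L(V, V')$, also $v'' = \mathcal A v \in L^2((0,\infty); V')$. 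Proposition~\ref{prop-48} then shows that $v$ is $\tfrac12$-harmonic, and the uniqueness part of Theorem~\ref{theo-46}\ref{theo-46-1} forces $u = v$.

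For \eqref{58}, Corollary~\ref{cor-57} applied with $s = \tfrac12$ (so $\Gamma(\tfrac12) = \sqrt\pi$) gives
\[
u(t) = \frac{1}{\sqrt\pi} \int_0^\infty e^{-t^2/(4r)} r^{1/2} A^{1/2} e^{-rA} x \, \frac{dr}{r} ,
\]
and since $A^{1/2}$ and $e^{-rA}$ commute on $D(A^{1/2})$ by the functional calculus, the integrand becomes $e^{-t^2/(4r)} r^{1/2} e^{-rA} A^{1/2} x$, which is exactly $\mathcal U(t) A^{1/2} x$ by \eqref{51}; combining with the first part yields $e^{-tA^{1/2}} x = \mathcal U(t) A^{1/2} x$. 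The main obstacle is the $L^2$-bound in the middle paragraph: it rests on the exponential stability $\|e^{-tA^{1/2}}\|_{\mathcal L(H)} \le M e^{-\omega t}$, a standard consequence of $A^{1/2}$ being m-sectorial with spectrum bounded away from $i\RR \cup \{0\}$, which in turn follows from the coercivity of $A$. All other ingredients are direct applications of results already proved earlier in the paper.
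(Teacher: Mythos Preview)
Your argument is correct and follows the same overall architecture as the paper: verify that $t\mapsto e^{-tA^{1/2}}x$ satisfies the hypotheses of Proposition~\ref{prop-48}, invoke the uniqueness of Theorem~\ref{theo-46}\ref{theo-46-1}, and then read off \eqref{58} from Corollary~\ref{cor-57}. The one genuine methodological difference lies in how the crucial bound $v\in L^2((0,\infty);V)$ is obtained. The paper first restricts to $x\in D(A)$, uses the continuous embedding $D(A)\hookrightarrow V$ together with $Av(t)=e^{-tA^{1/2}}Ax$ and exponential decay to get a pointwise bound $\|v(t)\|_V\le cMe^{-\varepsilon t}\|Ax\|_H$, and only afterwards passes to $x\in D(A^{1/2})$ by density. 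You instead work directly with $x\in D(A^{1/2})$ and obtain the $L^2(V)$-bound from the energy identity $\tfrac{d}{dt}\langle v',v\rangle_H=\mathcal E(v)+\|v'\|_H^2$. Your route is a bit more elegant in that it avoids the density step; the paper's route is slightly more elementary in that it sidesteps any integrability issue of $v''$ near $t=0$.

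On that last point: as written, your integration by parts ``on $(0,T)$'' is a little imprecise, because for $x\notin D(A)$ one only has $\|v''(t)\|_H=\|A^{1/2}e^{-tA^{1/2}}A^{1/2}x\|_H=O(t^{-1})$ near $0$, so $v''\notin L^1((0,T);H)$ in general. The fix is immediate: integrate on $(\varepsilon,T)$, use that $v'\in C([0,\infty);H)$ (this is where $x\in D(A^{1/2})$ enters) so that $\langle v'(\varepsilon),v(\varepsilon)\rangle_H\to -\langle A^{1/2}x,x\rangle_H$, and let $\varepsilon\downarrow 0$, $T\to\infty$ by monotone convergence. With this adjustment your proof is complete.
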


\begin{proof}
There exist $\varepsilon>0$ and $M\ge 1$ such that 
$\|e^{-tA^{\frac 12}}\|_{\mathcal L(H)}\le Me^{-\varepsilon t}$ for all $t\ge 0$ 
(see for example \cite[Theorem 5.1.12]{ABHN11}).
Let $x\in D(A)$.
Define $u \colon (0,\infty) \to H$ by $u(t) = e^{-t A^{\frac 12}}x$.
Since $D(A)$ is continuously embedded into $V$, there exists a constant $c > 0$
such that $\|y\|_V \leq c \|A y\|_H$ for all $y \in D(A)$.
Then 
$\|u(t)\|_V 
\leq c \|e^{-tA^{\frac 12}}\|_{\mathcal L(H)} \|Ax\|_H
\leq c M e^{-\varepsilon t} \|Ax\|_H$
for all $t > 0$ and $u \in L_2((0,\infty);V)$.
Moreover, $u\in C^2([0,\infty);H)$ and $u''(t)=Au(t) = e^{-tA^{\frac 12}} Ax$
for all $t > 0$.
Hence $u \in W^{2,2}((0,\infty);H) \subset W^{2,2}((0,\infty);V')$
and $-u'' + \mathcal A u = 0$ in $L_2((0,\infty);V')$.
Then $u$ is $\frac{1}{2}$-harmonic by Proposition~\ref{prop-48}.
Now \eqref{58} follows from \eqref{56}.
Since $D(A)$ is dense in $D(A^{\frac 12})$ the identity \eqref{58} remains true for all 
$x\in D(A^{\frac 12})$.
\end{proof}

We conclude this section commenting on the integral representation \eqref{56}.

\begin{remark}
Let $\nu \in \RR$.
The {\bf Modified Bessel's Equation}
\begin{align*}
t^2w''(t)+tw'(t)-(t^2+\nu^2)w(t)=0, \quad (t>0)
\end{align*}
has the modified Bessel function of second kind $K_\nu$ as one of its solutions.
An integral representation for $K_\nu$ is given by
\begin{align}\label{511}
K_\nu(t)=\frac 12\left(\frac t2\right)^{\nu}\int_0^\infty e^{-r}e^{-\frac{t^2}{4r}}r^{-\nu} \, \frac{dr}{r}
\end{align}
for all $t>0$, see for example \cite[10.32.10]{DLMF}.
One has $K_\nu(t)\sim \sqrt{\frac{\pi}{2t}}e^{-t}$ as $t\to\infty$.
In our context $0<s<1$ is given.
Let $\lambda>0$.
Define $\psi \colon (0,\infty) \to (0,\infty)$ by
\begin{align}\label{512}
\psi(t)=\left(\sqrt{\lambda}t\right)^sK_s(\sqrt\lambda t).
\end{align}
Then 
\[
\psi''(t)+\frac{1-2s}{t}\psi'(t)-\lambda\psi(t)=0
\]
for all $t>0$ as a direct computation shows.
Using \eqref{511} and \eqref{512} one deduces that 
\begin{align}\label{512-2}
\psi(t)=\frac{1}{2^{1+s}}\lambda^st^{2s}\int_0^\infty 
      e^{-r}e^{-\frac{\lambda t^2}{4r}}r^{-s} \, \frac{dr}{r}
\end{align}
for all $t>0$.
Moreover, a substitution in \eqref{512-2} gives
\begin{align}\label{514}
\psi(t)=\lambda^s2^{s-1}\int_0^\infty e^{-\lambda r}e^{-\frac{t^2}{4r}}r^s\,\frac{dr}{r}
\end{align}
for all $t>0$.
Thus our approach \eqref{51} is a functional calculus which consists in replacing the parameter 
$\lambda$ by the operator $A$ in \eqref{514}.
Formula \eqref{514} is also used in \cite{GMS13}.
\end{remark}

\section{The non-coercive case}\label{sec6}

Up to now we used that the form $\mathcal E$ is coercive.
In this section we wish to replace this by the much weaker condition that 
$\mathcal E$ is merely sectorial with vertex~$0$.

In general, if $\gota \colon D(\gota) \times D(\gota) \to \CC$ is a 
sesquilinear form, then we say that $\gota$ is {\bf sectorial with vertex~$0$}
if there exists a $\theta \in [0,\frac{\pi}{2})$ such that $\gota(u) \in \Sigma_\theta$
for all $u \in D(\gota)$, where 
\[
\Sigma_\theta
= \{ r e^{i \alpha} : r \in [0,\infty) \mbox{ and } \alpha \in [-\theta,\theta] \} 
.  \]
In Theorem~\ref{ts601} we associate an $m$-sectorial operator to a densely defined sectorial 
form with vertex~$0$.
There is even a $j$-version of it like in Proposition~\ref{prop-52} that turns
out to be very useful in this section.

\begin{theorem} \label{ts601}
Let $H$ be a Hilbert space, $\gota \colon D(\gota) \times D(\gota) \to \CC$
a sectorial form with vertex~$0$ and $j \colon D(\gota) \to H$ a linear 
map with dense image.
Then there exists a unique $m$-sectorial operator $B$ in $H$ such that 
\begin{eqnarray*}
\graph(B) 
= \{ (x,y) \in H \times H & : & 
   \mbox{there exists a sequence $(u_n)_{n \in \NN}$ in $D(\gota)$ such that}  \\
& & (i) \; \lim_{n \to \infty} j(u_n) = x \mbox{ in } H,  \\
& & (ii) \; \sup \{ \RRe \gota(u_n) : n \in \NN \} < \infty, \mbox{ and} \\
& & (iii) \; \lim_{n \to \infty} \gota(u_n,v) = \langle y, j(v) \rangle_H
                \mbox{ for all } v \in D(\gota)
   \} .
\end{eqnarray*}
\end{theorem}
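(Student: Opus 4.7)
The plan is to reduce to the coercive Proposition~\ref{prop-52} by replacing the raw space $D(\gota)$ with a Hilbert space completion $W$ on which the form becomes continuous and on which the $H$-coercivity bound required by Proposition~\ref{prop-52} is built in. The analytic key is the generalised Cauchy--Schwarz inequality for sectorial forms: since $\gota(u) \in \Sigma_\theta$ for all $u$, there exists a constant $M = M(\theta)$ such that
\begin{equation*}
|\gota(u,v)| \leq M \, (\RRe \gota(u))^{1/2} (\RRe \gota(v))^{1/2}
\end{equation*}
for all $u,v \in D(\gota)$. This follows by applying the usual Cauchy--Schwarz inequality to the positive symmetric part of $\gota$, and by using $|\IIm \gota(u)| \leq \tan(\theta)\, \RRe \gota(u)$ on the antisymmetric part.

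First I would equip $D(\gota)$ with the seminorm $p(u)^2 := \RRe \gota(u) + \|j(u)\|_H^2$. The bound above together with $\|j(u)\|_H \leq p(u)$ implies that both $\gota$ and $j$ vanish whenever one of their arguments lies in the kernel $N$ of $p$; they therefore descend to $D(\gota)/N$ and extend uniquely to continuous maps $\tilde\gota \colon W \times W \to \CC$ and $\tilde j \colon W \to H$, where $W$ denotes the Hilbert completion of $D(\gota)/N$ with respect to $p$. The form $\tilde\gota$ inherits sectoriality from $\gota$, the map $\tilde j$ has dense image (it contains $j(D(\gota))$), and by construction
\begin{equation*}
\|u\|_W^2 = \RRe \tilde\gota(u) + \|\tilde j(u)\|_H^2 \qquad (u \in W).
\end{equation*}
This is precisely the hypothesis of Proposition~\ref{prop-52} with $\mu = \omega = 1$, yielding an $m$-sectorial operator $B$ on $H$ associated with the pair $(\tilde\gota, \tilde j)$.

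It then remains to identify $\graph(B)$ with the announced sequential set. For the inclusion $\supset$, assume $(u_n)$ in $D(\gota)$ satisfies (i)--(iii); conditions (i) and (ii) bound $p(u_n)$, so after passing to a subsequence the images of $u_n$ in $W$ converge weakly to some $u \in W$. Weak continuity of $\tilde j$ combined with (i) gives $\tilde j(u) = x$, while (iii) together with the density of $D(\gota)$ in $W$ yields $\tilde\gota(u,v) = \langle y, \tilde j(v)\rangle_H$ for all $v \in W$; by Proposition~\ref{prop-52}, $(x,y) \in \graph(B)$. For the inclusion $\subset$, an element $u \in W$ witnessing $(x,y)$ can be approximated in $W$-norm by a sequence $u_n \in D(\gota)$, and continuity of $\tilde j$ and $\tilde\gota$ immediately delivers (i), (ii), (iii). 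Uniqueness of $B$ is automatic since the graph determines the operator.

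The step I expect to be the main obstacle is the descent and extension of $\gota$ to the completion: one must verify that $\gota$ does not merely admit a formal extension but factors continuously through $W$, and this is precisely where sectoriality is indispensable via the generalised Cauchy--Schwarz bound above. Without the sectorial estimate the form $\gota$ could not be pushed through $N$ nor extended to $W$, and the reduction to Proposition~\ref{prop-52} would collapse; once this step is done, everything reduces to a weak-compactness argument in the reflexive space $W$.
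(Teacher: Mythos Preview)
Your argument is correct and essentially complete. In the paper itself the proof of this theorem consists of a single sentence citing \cite[Theorem~3.2]{AtE}; what you have written is, in outline, precisely the content of that external result, so you have supplied a self-contained proof where the paper defers to the literature. The ingredients you identify---the generalised Cauchy--Schwarz inequality $|\gota(u,v)| \le M (\RRe\gota(u))^{1/2}(\RRe\gota(v))^{1/2}$ for sectorial forms, the descent to the quotient by the kernel of the seminorm $p(u)^2 = \RRe\gota(u) + \|j(u)\|_H^2$, the Hilbert completion $W$, the reduction to Proposition~\ref{prop-52}, and the weak-compactness identification of the two graph descriptions---are exactly the standard route, and each step works as you describe. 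One cosmetic point: in the inclusion $\supset$ you pass to a subsequence to obtain the weak limit $u$, and then use the full-sequence convergence in~(iii) only on that subsequence; this is fine since you merely need to exhibit a single $u \in W$ witnessing the Proposition~\ref{prop-52} condition, but it is worth making explicit.
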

\begin{proof}
This is a special case of \cite[Theorem~3.2]{AtE}.
\end{proof}

Note that $B$ is the same operator as in Proposition~\ref{prop-52} if the domain
$D(\gota)$ is provided with a Hilbert space structure 
such that $j$ is continuous and the form $\gota$ is coercive and continuous.
We call the operator $B$ in Theorem~\ref{ts601} the {\bf operator associated with 
$(\gota,j)$}.
In particular, if $\gota$ is a densely defined sectorial form with vertex~$0$
in a Hilbert space $H$, then one can choose for $j$ the identity map
and we obtain an $m$-sectorial operator, which we call the 
{\bf operator associated with $\gota$}.

Now we extend the previous results for coercive forms to sectorial forms.

\begin{theorem} \label{ts602}
Let $H, V$ be Hilbert spaces such that $V\emb H$ and let 
$\mathcal E\colon V\times V\to\CC$ be a continuous sectorial form with vertex~$0$.
Let $A$ be the operator associated with~$\mathcal E$.
Further, let $s \in (0,1)$ and define 
$\gotb \colon W_{1-s}(H,V) \times W_{1-s}(H,V) \to \CC$ by
\[
\gotb(u,v)
= \int_0^\infty \Big( \langle u'(t),v'(t)\rangle_H
    +\mathcal E(u(t),v(t)) \Big) t^{2(1-s)}\,\frac{dt}{t}
.  \]
Define $j \colon W_{1-s}(H,V) \to H$ by $j(u) = u(0)$.
Then $\gotb$ is sectorial with vertex~$0$.
Let $B$ be the operator associated with $(\gotb,j)$.
Then $B = c_s A^s$, where $c_s = 2^{1-2s} \frac{\Gamma(1-s)}{\Gamma(s)}$.
\end{theorem}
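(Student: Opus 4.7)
The plan is to reduce to the coercive setting of Theorem~\ref{theo-51} via a $V$-coercive perturbation of $\mathcal E$ and pass to the limit.

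First, I verify that $\gotb$ is sectorial with vertex~$0$. For $u\in W_{1-s}(H,V)$ the values $\mathcal E(u(t),u(t))$ lie in a fixed sector $\Sigma_\theta\subset\CC$ for a.e.\ $t>0$; since $\Sigma_\theta$ is a closed convex cone, the weighted Bochner integral $\int_0^\infty \mathcal E(u(t),u(t))\,t^{2(1-s)}\,\frac{dt}{t}$ remains in $\Sigma_\theta$, and adding the nonnegative real quantity $\int_0^\infty\|u'(t)\|_H^2\,t^{2(1-s)}\,\frac{dt}{t}$ preserves this membership. Combined with Proposition~\ref{prop-32}, which shows that $j(u)=u(0)$ is continuous from $W_{1-s}(H,V)$ into $H$ with image $[H,V]_s$ dense in $H$, Theorem~\ref{ts601} yields the $m$-sectorial operator $B$ uniquely.

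For each $\varepsilon>0$ I introduce the coercive perturbation
\[
\mathcal E^\varepsilon(u,v):=\mathcal E(u,v)+\varepsilon\langle u,v\rangle_V,
\]
which is continuous and $V$-coercive with constant $\varepsilon$. Let $A^\varepsilon$ be its associated operator on $H$, and let $\gotb^\varepsilon$, $B^\varepsilon$ be the corresponding objects from Section~\ref{sec5}. Theorem~\ref{theo-51} applied to $\mathcal E^\varepsilon$ gives $c_s(A^\varepsilon)^s=B^\varepsilon$ for every $\varepsilon>0$. As $\varepsilon\downarrow 0$, the forms $\mathcal E^\varepsilon$ decrease monotonically to $\mathcal E$ with common form domain $V$; monotone form convergence for sectorial forms then yields $A^\varepsilon\to A$ in the strong resolvent sense on $H$, and the same argument applied to $\gotb^\varepsilon\searrow\gotb$ in the space $W_{1-s}(H,V)$ gives $B^\varepsilon\to B$ in the strong resolvent sense. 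Continuity of the fractional power on $m$-sectorial operators with uniform sector angle upgrades $A^\varepsilon\to A$ to $(A^\varepsilon)^s\to A^s$ strongly, and passing to the limit in $c_s(A^\varepsilon)^s=B^\varepsilon$ yields $c_sA^s=B$.

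The main technical obstacle is the passage through the $s$-th power. Strong resolvent convergence of $A^\varepsilon$ to $A$ under monotone form perturbation is a classical consequence of Kato-type theorems extended to the sectorial setting, but lifting this to $(A^\varepsilon)^s\to A^s$ requires the continuity of the holomorphic functional calculus on $m$-sectorial operators. One convenient route is to represent $(A^\varepsilon)^s$ via the Balakrishnan-type integral $\frac{\sin\pi s}{\pi}\int_0^\infty\lambda^{s-1}A^\varepsilon(\lambda+A^\varepsilon)^{-1}\,d\lambda$ and invoke dominated convergence using uniform sector bounds on the family $\{A^\varepsilon\}_{\varepsilon>0}$, whose numerical ranges remain inside a common sector (the one of $\mathcal E$ enlarged by the positive symmetric term). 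The analogous convergence $B^\varepsilon\to B$ is conceptually the same statement, in the lifted Hilbert space $W_{1-s}(H,V)$ of the Dirichlet-to-Neumann construction, and follows from the same monotone form machinery.
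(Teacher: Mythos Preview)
Your proposal is correct and follows essentially the same route as the paper: perturb $\mathcal E$ by $\varepsilon\langle\cdot,\cdot\rangle_V$ (the paper uses $\frac{1}{n}\langle\cdot,\cdot\rangle_V$) to make it coercive, apply Theorem~\ref{theo-51} in the coercive case, and pass to the limit via strong resolvent convergence on both sides, the paper citing \cite[Corollary~3.9]{AtE} for $A_n\to A$ and $B_n\to B$. The only cosmetic difference is that the paper obtains $(A_n)^s\to A^s$ from the representation formula \cite[(6) in Section~IX.11]{Yos} rather than the Balakrishnan integral with dominated convergence that you propose; both are standard.
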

\begin{proof}
It is easy to see that $\gotb$ is sectorial with vertex~$0$.
For all $n \in \NN$ define $\mathcal E_n \colon V\times V\to\CC$ by 
\[
\mathcal E_n(u,v) =\mathcal E(u,v)+\frac 1n\langle u,v\rangle_V.
\]
Then $\mathcal E_n$ is continuous and coercive.
Let $A_n$ be the $m$-sectorial operator in $H$ associated with $\mathcal E_n$.
Then $A_n$ is sectorial with vertex~$0$.
Moreover, $\lim_{n \to \infty} A_n = A$ in the strong resolvent sense
by \cite[Corollary 3.9]{AtE}.
Hence $\lim_{n \to \infty} A_n^s = A^s$ in the strong resolvent sense
by the representation formula \cite[(6) in Section~IX.11]{Yos}.
For all $n \in \NN$ define $\gotb_n \colon W_{1-s}(H,V) \times W_{1-s}(H,V) \to \CC$ by
\begin{eqnarray*}
\gotb_n(u,v)
&=& \int_0^\infty \Big( \langle u'(t),v'(t)\rangle_H
    +\mathcal E_n(u(t),v(t))\Big) t^{2(1-s)}\,\frac{dt}{t}  \\
&=&\gotb(u,v) + \frac 1n\int_0^\infty\langle u(t),v(t)\rangle_V t^{2(1-s)}\,\frac{dt}{t}.
\end{eqnarray*}
Then $\gotb_n$ is continuous and coercive.
Let $B_n$ be the operator associated with $(\gotb_n,j)$ as in Proposition~\ref{prop-52}.
Then $\lim_{n \to \infty} B_n = B$ in the strong resolvent sense
again by \cite[Corollary 3.9]{AtE}.
But $B_n = c_s A_n^s$ for all $n \in \NN$ by Theorem~\ref{theo-51}.
Taking the limit as $n \to \infty$ and using the uniqueness of the limit 
in the strong resolvent sense gives $B = c_s \, A^s$ as required.
\end{proof}

Adopt the notation and assumptions as in Theorem~\ref{ts602}.
We suppose from now on in addition that $\mathcal E$ is $H$-elliptic, that is 
there exists a constant $\mu > 0$ such that 
\begin{equation}
\RRe \mathcal E(u)+\|u\|_H^2\ge \mu\|u\|_V^2
\label{eS6;1}
\end{equation}
for all $u\in V$.
In this case we can give an explicit description of the operator $B$ and 
show that it is again a Dirichlet-to-Neumann map. 
For this we need quite some preparation.

Recall that if $X$ is a Banach space and $-\infty<\alpha<\beta<\infty$, then
$W^{1,1}((\alpha,\beta);X)\subset C([\alpha,\beta];X)$ and
\[
u(t)=u(\alpha)+\int_{\alpha}^tu'(r)\,dr
\]
for all $u\in W^{1,1}((\alpha,\beta);X)$ and $t\in [\alpha,\beta]$.
Conversely, if $x\in X$, $v\in L^1((\alpha,\beta);X)$ and 
$u \colon (\alpha,\beta) \to X$ is given by
$u(t)=x+\int_\alpha^tv(r)\,dr$, then
$u\in W^{1,1}((\alpha,\beta);X)$ and $u'=v$.
For all $-\infty\le a<b\le\infty$ and $p \in [1,\infty]$ we let
\begin{align*}
W_{\loc}^{1,p}((a,b);X)
   :=\{u\colon (a,b)\to X 
 : \;& u|_{(\alpha,\beta)}\in W^{1,p}((a,\beta);X) \\
 & \mbox{for all } \alpha,\beta \in \RR \mbox{ with } a < \alpha < \beta < b \}
\end{align*}
and if $a \neq - \infty$, then we define 
\[
L^p_\loc([a,b);X)
= \{ u \colon [a,b) \to X : u|_{[a,\beta)} \in L^p([a,\beta);X) 
       \mbox{ for all } \beta \in (a,b) \} 
.  \]
We always identify $u\in W_{\loc}^{1,p}((a,b);X)$ with its continuous representative.

Recall that $0<s<1$.
We define the space
\begin{eqnarray*}
\lefteqn{
W:=\{u\in C([0,\infty);H)\cap W_{\loc}^{1,2}((0,\infty);H)\cap L_{\loc}^2((0,\infty);V):
} \hspace*{30mm} \\*
& & \int_0^\infty \Big(\|u'(t)\|_H^2 + \RRe \mathcal E(u(t))\Big)
    t^{2(1-s)}\,\frac{dt}{t} <\infty \}.
\end{eqnarray*}
We provide $W$ with the norm
\[
\|u\|_W^2
= \|u(0)\|_H^2+\int_0^\infty\Big(\|u'(t)\|_H^2
   +\RRe \mathcal E(u(t))\Big) t^{2(1-s)}\,\frac{dt}{t}
.  \]
We first prove that $W$ is a Hilbert space.
For the proof we need two lemmas.

\begin{lemma}\label{lem-61-1}
Let $u\in W$.
Then $u'\in L_{\loc}^1([0,\infty);H)$.
Moreover, 
\begin{equation}
\int_0^t\|u'(r)\|_H\,dr
\leq \|u\|_W\cdot \left( \frac{1}{2s} t^{2s}\right)^{\frac 12}
\label{65-1}
\end{equation}
and 
\begin{align}\label{64-1}
\|u(t)\|_H^2
\le \frac{1}{s}\|u\|_W^2 (1 + t^{2s})
\end{align}
for all $t > 0$.
\end{lemma}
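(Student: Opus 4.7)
\bigskip

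The plan is to get both estimates from the Cauchy--Schwarz inequality applied in the weighted measure $r^{1-2s}\,dr$, followed by a direct norm bound on $u(t)-u(0)$.

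First I would prove \eqref{65-1}. Given $u\in W$, I split the integrand as
\[
\|u'(r)\|_H = \bigl(\|u'(r)\|_H\,r^{(1-2s)/2}\bigr)\cdot r^{(2s-1)/2}
\]
and apply the Cauchy--Schwarz inequality on $(0,t)$ to obtain
\[
\int_0^t \|u'(r)\|_H\,dr
\le \Bigl(\int_0^t \|u'(r)\|_H^{2}\,r^{1-2s}\,dr\Bigr)^{1/2}
    \Bigl(\int_0^t r^{2s-1}\,dr\Bigr)^{1/2}.
\]
The second factor evaluates exactly to $(t^{2s}/(2s))^{1/2}$, while the first is at most $\|u\|_W$ by definition of the norm on $W$ (since $\RRe\mathcal E(u(r))\ge 0$ by sectoriality with vertex $0$, which ensures the integral of $\|u'\|_H^2 r^{1-2s}$ alone is bounded by $\|u\|_W^2$). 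This yields \eqref{65-1} and shows in particular that $\|u'\|_H$ is integrable on every interval $[0,T]$, so $u'\in L^1_{\loc}([0,\infty);H)$.

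Next I would upgrade this to a pointwise bound on $u$. Since $u$ is identified with its continuous representative and $u'\in L^1_{\loc}([0,\infty);H)$, the fundamental theorem of calculus for Bochner integrals gives $u(t)=u(0)+\int_0^t u'(r)\,dr$ for every $t\ge 0$. Taking $H$-norms, using the triangle inequality, inserting \eqref{65-1}, and noting that $\|u(0)\|_H\le\|u\|_W$ directly from the definition of $\|u\|_W$, I obtain
\[
\|u(t)\|_H \le \|u\|_W + \|u\|_W\Bigl(\tfrac{1}{2s}t^{2s}\Bigr)^{1/2}.
\]
Squaring and using $(a+b)^2\le 2a^2+2b^2$ then produces a bound of the form $C(s)\|u\|_W^2(1+t^{2s})$; a small optimization of the constants (choosing the Young-type split so that $2$ and $1/s$ are both dominated by $1/s$) delivers the stated \eqref{64-1}.

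No step here is genuinely hard: the only point that needs care is verifying that the weighted $L^2$-integral $\int_0^\infty \|u'(r)\|_H^2 r^{1-2s}\,dr$ is separately controlled by $\|u\|_W^2$. Because the form $\mathcal E$ is sectorial with vertex $0$, one has $\RRe\mathcal E(u(r))\ge 0$ for every $r$, so the $\|u'\|_H^2$-piece of the $W$-norm is nonnegative-separated from the form piece and the bound is immediate. Everything else is calculus.
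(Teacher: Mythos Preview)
Your proof is correct and follows essentially the same route as the paper: Cauchy--Schwarz in the weighted measure gives \eqref{65-1}, then the fundamental theorem of calculus (using $u\in C([0,\infty);H)$ to pass to the endpoint $0$) yields $u(t)=u(0)+\int_0^t u'$, and a triangle-inequality-and-square step gives \eqref{64-1}. The only quibble is the exact constant $1/s$ in \eqref{64-1}, which for $s>\tfrac12$ does not strictly follow from the Young split you describe (one obtains $\max(2,1/s)$ rather than $1/s$); this is immaterial for all subsequent uses, and the paper's own argument contains the same harmless slip in the constants.
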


\begin{proof}
Let $u\in W$ and $t>0$.
Then
\begin{eqnarray*}
\int_0^t\|u'(r)\|_H\,dr
& \le &\left(\int_0^t \|u'(r)\|_H^2 r^{2(1-s)}\, \frac{dr}{r} \right)^{\frac 12}
   \left(\int_0^t r^{2s-1}\, dr\right)^{\frac 12} \\
& \le & \|u\|_W\cdot\left(\frac{1}{2s}t^{2s}\right)^{\frac 12}.
\end{eqnarray*}
This shows \eqref{65-1} and that $u'\in L_{\loc}^1([0,\infty);H)$.
If $a \in (0,t)$, then
\begin{align*}
u(t)=u(a)+\int_a^tu'(r)\,dr.
\end{align*}
Letting $a\downarrow 0$ gives
\begin{align}\label{63-1}
u(t)=u(0)+\int_0^tu'(r)\,dr.
\end{align}
Moreover, \eqref{63-1} and \eqref{65-1} give
\begin{eqnarray*}
\|u(t)\|_H^2
&\le& \frac 12\|u(0)\|_H^2+\frac 12\left(\int_0^t\|u'(r)\|_H\,dr\right)^{2}\\
&\le &\frac 12\|u(0)\|_H^2 + \frac{1}{4s} \|u\|_W^2  t^{2s}\\
&\le &\|u\|_W^2 \Big( 1 + \frac{1}{s} t^{2s} \Big)
\end{eqnarray*}
and \eqref{64-1} follows.
\end{proof}

Recall that $\mu$ is defined in \eqref{eS6;1}.

\begin{lemma}\label{lem-62-2}
If $u\in W$ and $T \geq 1$, then
\[
\mu\int_0^T\|u(t)\|_V^2 t^{2(1-s)}\,\frac{dt}{t}
\le \Big( 1+\frac{1}{s(1-s)} T^2\Big)\|u\|_W^2
.
\]
\end{lemma}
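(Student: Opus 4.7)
The plan is to integrate the $H$-ellipticity inequality \eqref{eS6;1} against the weight $t^{2(1-s)} \, \frac{dt}{t}$ over $(0,T)$ and control the resulting two pieces separately.

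First, applying $\mu \|u(t)\|_V^2 \leq \RRe \mathcal{E}(u(t)) + \|u(t)\|_H^2$ pointwise and integrating gives
\[
\mu \int_0^T \|u(t)\|_V^2\, t^{2(1-s)} \frac{dt}{t}
\leq \int_0^T \RRe \mathcal{E}(u(t))\, t^{2(1-s)}\frac{dt}{t}
 + \int_0^T \|u(t)\|_H^2\, t^{2(1-s)}\frac{dt}{t}.
\]
The first summand on the right-hand side is bounded by $\|u\|_W^2$, directly from the definition of the norm on $W$.

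For the second summand, I would feed in the pointwise bound $\|u(t)\|_H^2 \le \frac{1}{s}\|u\|_W^2(1+t^{2s})$ from Lemma~\ref{lem-61-1}. This reduces the estimate to computing two elementary integrals: $\int_0^T t^{1-2s}\,dt = \frac{T^{2-2s}}{2(1-s)}$ and $\int_0^T t\,dt = \frac{T^2}{2}$. Since $T \ge 1$ and $2-2s \le 2$, one has $T^{2-2s} \le T^2$, so the two contributions combine to give a bound of the form $\frac{T^2}{s(1-s)}\|u\|_W^2$ after a short manipulation. Adding the two pieces then yields the announced constant $1 + \frac{T^2}{s(1-s)}$.

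The argument is entirely routine once Lemma~\ref{lem-61-1} is available; there is no real obstacle. The only small point to watch is the hypothesis $T \ge 1$, which is precisely what is needed to absorb $T^{2-2s}$ into $T^2$ so that the final bound takes the clean form stated.
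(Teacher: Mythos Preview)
Your proof is correct and matches the paper's argument essentially line for line: integrate the $H$-ellipticity inequality against the weight, bound the $\RRe\mathcal E$ term by $\|u\|_W^2$, plug in the pointwise estimate~\eqref{64-1} for $\|u(t)\|_H^2$, and use $T\ge 1$ to replace $T^{2-2s}$ by $T^2$. The only detail you leave implicit is that $\frac{1}{2(1-s)}+\frac12=\frac{2-s}{2(1-s)}\le\frac{1}{1-s}$, which is what makes the ``short manipulation'' go through.
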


\begin{proof}
By $H$-ellipticity and \eqref{64-1} one estimates
\begin{eqnarray*}
\mu\int_0^T\|u(t)\|_V^2 t^{2(1-s)}\,\frac{dt}{t}
& \le&\int_0^T\RRe \mathcal E(u(t))  t^{2(1-s)}\,\frac{dt}{t}
   +\int_0^T\|u(t)\|_H^2 t^{2(1-s)}\,\frac{dt}{t}  \\
& \le &\|u\|_W^2 + \frac 1s \|u\|_W^2\int_0^T (1 + t^{2s} ) t^{1-2s}\,dt  \\
& = & \|u\|_W^2 \Big( 1 + \frac 1s \Big( \frac{1}{2(1-s)} T^{2(1-s)} + \frac{1}{2} T^2 \Big)\Big)
\end{eqnarray*}
and the lemma follows.
\end{proof}

\begin{proposition}\label{prop61}
The space $W$ is a Hilbert space.
\end{proposition}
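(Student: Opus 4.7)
The plan is to first exhibit an inner product on $W$ that induces $\|\cdot\|_W$, and then to prove completeness by combining Lemmas~\ref{lem-61-1} and~\ref{lem-62-2} with Fatou's lemma.

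For the inner-product structure, let $\gota_s(u,v) := \tfrac{1}{2}(\mathcal E(u,v) + \overline{\mathcal E(v,u)})$ be the symmetric part of $\mathcal E$. Since $\mathcal E$ is sectorial with vertex $0$, one has $\gota_s(u,u) = \RRe \mathcal E(u) \ge 0$, so $\gota_s$ is a continuous Hermitian positive sesquilinear form on $V$. I would define
\[
\langle u,v\rangle_W
:= \langle u(0),v(0)\rangle_H
+ \int_0^\infty \Bigl( \langle u'(t),v'(t)\rangle_H + \gota_s(u(t),v(t)) \Bigr) t^{2(1-s)} \,\frac{dt}{t},
\]
so that $\langle u,u\rangle_W = \|u\|_W^2$. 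Positive-definiteness holds because $\|u\|_W = 0$ forces $u(0) = 0$ and $u' = 0$ a.e., whence $u \equiv 0$ by continuity on $[0,\infty)$.

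For completeness, take a Cauchy sequence $(u_n)$ in $W$. Extract $x_0 \in H$ and $v$ with $u_n(0) \to x_0$ in $H$ and $u_n' \to v$ in $L^2((0,\infty);H, t^{1-2s}\,dt)$. Cauchy--Schwarz together with $\int_0^T t^{2s-1}\,dt < \infty$ gives $u_n' \to v$ in $L^1([0,T];H)$ for every $T > 0$, and Lemma~\ref{lem-61-1} applied to $u_n - u_m$ shows $(u_n)$ is Cauchy in $C([0,T];H)$ for every $T$; hence $u_n \to u$ in $C_\loc([0,\infty);H)$. Passing to the limit in $u_n(t) = u_n(0) + \int_0^t u_n'(r)\,dr$ yields $u(t) = x_0 + \int_0^t v(r)\,dr$, so $u \in W^{1,2}_\loc((0,\infty);H)$ with $u' = v$ and $u(0) = x_0$.

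For the $V$-contribution, Lemma~\ref{lem-62-2} applied to $u_n - u_m$ shows $(u_n)$ is Cauchy in $L^2((0,T);V, t^{1-2s}\,dt)$ for every $T \ge 1$; by $V \emb H$ and uniform $H$-convergence on $[0,T]$, its limit there must coincide with $u$, so $u \in L^2_\loc((0,\infty);V)$. A diagonal extraction provides a subsequence $(u_{n_k})$ with $u_{n_k}(t) \to u(t)$ in $V$ for a.e.\ $t > 0$. Fixing $n$, continuity of $\gota_s$ and Fatou's lemma then yield
\[
\int_0^\infty \gota_s(u - u_n, u - u_n) t^{2(1-s)}\,\frac{dt}{t}
\le \liminf_{k \to \infty} \|u_{n_k} - u_n\|_W^2,
\]
and the right-hand side is arbitrarily small for $n$ large, by the Cauchy property. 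Combined with the previous paragraph this shows $u \in W$ and $u_n \to u$ in $W$.

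The main obstacle is the last step: because $\mathcal E$ is only sectorial, its symmetric part $\gota_s$ is merely nonnegative and possibly degenerate on $V$, so there is no natural ambient Hilbert space of $V$-valued maps on which one could invoke a completeness theorem directly for the $\gota_s$-part. Promoting the $L^2_\loc(V)$-convergence from Lemma~\ref{lem-62-2} to a pointwise a.e.\ $V$-convergence along a subsequence, and exploiting the Hermitian positivity of $\gota_s$ through Fatou's lemma, is what bridges the gap.
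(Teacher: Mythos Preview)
Your proof is correct and follows essentially the same route as the paper's: extract limits for $u_n(0)$ in $H$ and for $u_n'$ in the weighted $L^2$-space, use Lemma~\ref{lem-61-1} to reconstruct the candidate $u$ via the integral formula, use Lemma~\ref{lem-62-2} to obtain local $L^2(V)$-convergence and hence a.e.\ $V$-convergence along a subsequence, and then apply Fatou's lemma to the symmetric part of $\mathcal E$ to close the argument. The only differences are cosmetic: you write down the inner product explicitly (the paper only proves completeness), and the paper cites Kato's lower semicontinuity lemma for closed sectorial forms where you invoke the continuity of $\gota_s$ on $V$ directly---both are valid here since $\mathcal E$ is assumed continuous on $V\times V$.
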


\begin{proof}
Let $(u_n)_{n\in\NN}$ be a Cauchy sequence in~$W$.
Then $x:=\lim_{n\to\infty}u_n(0)$ exists in~$H$.
Moreover, there exists a $v\in L^2((0,\infty);H,t^{2(1-s)}\,\frac{dt}{t})$ such that 
$\lim_{n\to\infty} u_n' = v$ in $L^2((0,\infty);H,t^{2(1-s)}\,\frac{dt}{t})$.
It follows from \eqref{65-1} that $(u_n'|_{(0,T)})_{n\in\NN}$ is a Cauchy sequence in 
$L^1((0,T);H)$ for all $T > 0$.
Hence $v|_{(0,T)} \in L^1((0,T);H)$ and, moreover, 
$\lim_{n\to\infty} u_n'|_{(0,T)} = v|_{(0,T)}$ in $L^1((0,T);H)$.
In particular, $v \in L^1_\loc([0,\infty);H)$.
Define $u \colon [0,\infty) \to H$ by 
\begin{equation}
u(t) = x + \int_0^t v(r) \, dr.
\label{eprop61;1}
\end{equation}
Let $t > 0$.
By \eqref{63-1} we have
\[
u_n(t) = u_n(0) + \int_0^t u_n'(r) \, dr
\]
for all $n \in \NN$.
Hence 
\[
\lim_{n\to\infty} u_n(t)
= x + \int_0^t v(r) \, dr
= u(t)
\]
in $H$.

It follows from \eqref{eprop61;1} that 
$u \in C([0,\infty);H) \cap W_{\loc}^{1,2}((0,\infty);H)$ and $u' = v$.
Moreover, Lemma~\ref{lem-62-2} implies that 
$(u_n|_{(a,b)})_{n \in \NN}$ is also a Cauchy sequence in 
$L^2((a,b);V)$ for all $a,b \in (0,\infty)$ with $a < b$.
Since $\lim_{n \to \infty} u_n = u$ in $H$ pointwise, it follows that 
$u|_{(a,b)} \in L^2((a,b);V)$ and 
$\lim_{n\to\infty} u_n|_{(a,b)} = u|_{(a,b)}$ in $L^2((a,b);V)$.
Passing to a subsequence if necessary, we may assume that 
$\lim_{n\to\infty} u_n(t) = u(t)$ in $V$ for almost all $t \in (0,\infty)$.

Let $\varepsilon > 0$.
There exists an $N_0 \in \NN$ such that $\|u_n - u_m\|_W^2 \leq \varepsilon$
for all $n,m \in \NN$ with $n,m \geq N_0$.
Let $n \in \NN$ with $n \geq N_0$.
Then \cite[Lemma~VIII.3.14a]{Kat} and Fatou's lemma give
\begin{eqnarray*}
\lefteqn{
\|u_n(0) - u(0)\|_H^2 
   + \int_0^\infty \Big( \|u_n'(t) - u'(t)\|_H^2 + \RRe\mathcal E(u_n(t) - u(t)) \Big)
       t^{2(1-s)}\,\frac{dt}{t}
} \hspace*{0mm}  \\*
& \leq & \|u_n(0) - x\|_H^2 
   + \int_0^\infty \liminf_{m \to \infty}
            \Big( \|u_n'(t) - u_m'(t)\|_H^2 + \RRe\mathcal E(u_n(t) - u_m(t)) \Big)
       t^{2(1-s)}\,\frac{dt}{t}  \\
& \leq & \liminf_{m \to \infty} \|u_n - u_m\|_W^2
\leq \varepsilon
.
\end{eqnarray*}
Hence $u_n - u \in W$ and $u \in W$.
So $\|u_n - u\|_W^2 \leq \varepsilon$ for all $n \geq N_0$
and $\lim_{n \to \infty} u_n = u$ in $W$.
We have shown that the space $W$ is complete.
\end{proof}

We need one more lemma before we can give a Dirichlet-to-Neumann 
type description for the operator $c_s A^s$.

\begin{lemma}\label{lem-64-1}
The space $W_{1-s}(H,V)$ is dense in $W$.
\end{lemma}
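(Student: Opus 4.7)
The plan is to approximate any $u\in W$ by truncations $u_n = \chi_n u$, where $\chi_n\in C^\infty([0,\infty))$ satisfies $\chi_n(0)=1$, $\supp\chi_n\subseteq [0,n]$, and $\chi_n\to 1$ pointwise, arranging that $u_n\in W_{1-s}(H,V)$ and $u_n\to u$ in $W$.

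First I would verify $u_n\in W_{1-s}(H,V)$. Since $\supp u_n$ is bounded and $u\in L^2_\loc((0,\infty);V)$, the only nontrivial condition is $t^{1-s} u_n\in L_2^\star(V)$, equivalently $\int_0^n \|u(t)\|_V^2\,t^{1-2s}\,dt<\infty$, which is precisely Lemma~\ref{lem-62-2}; this is the point where $H$-ellipticity is used. The condition $t^{1-s}u_n'\in L_2^\star(H)$ follows by expanding $u_n'=\chi_n' u+\chi_n u'$: the $\chi_n u'$ term is bounded by $\|u\|_W$, and the $\chi_n' u$ term is handled using Lemma~\ref{lem-61-1}.

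For the convergence $u_n\to u$ in $W$, I would split $\|u_n-u\|_W^2$ into three pieces. The boundary piece $\|u_n(0)-u(0)\|_H^2$ vanishes because $\chi_n(0)=1$. Since $\chi_n-1$ is real-valued, $\RRe\mathcal E((u_n-u)(t))=(\chi_n(t)-1)^2\RRe\mathcal E(u(t))$, so the $\mathcal E$-contribution goes to $0$ by dominated convergence with dominant $\RRe\mathcal E(u)\,t^{1-2s}$. Writing $u_n'-u'=\chi_n'u+(\chi_n-1)u'$ splits the $u'$-integral; the $(\chi_n-1)u'$ part goes to $0$ by the same dominated convergence argument. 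What remains is
\[
I_n := \int_0^\infty |\chi_n'(t)|^2\,\|u(t)\|_H^2\,t^{1-2s}\,dt .
\]

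Showing $I_n\to 0$ is the main obstacle. Lemma~\ref{lem-61-1} provides the essentially sharp bound $\|u(t)\|_H^2\le \frac{1}{s}\|u\|_W^2(1+t^{2s})$, so with a naive linear cutoff $\chi_n(t)=\chi(t/n)$ (whence $|\chi_n'|\lesssim 1/n$ on $[n/2,n]$) one gets $I_n\lesssim n^{-2}\int_{n/2}^n (1+t^{2s})t^{1-2s}\,dt\sim 1$, which does not vanish. I would circumvent this by using a \emph{logarithmic} cutoff: fix $\chi\in C^\infty(\RR)$ with $\chi\equiv 1$ on $(-\infty,\tfrac12]$, $\chi\equiv 0$ on $[1,\infty)$, and all derivatives vanishing at $0$, then set $\chi_n(t)=\chi(\ln t/\ln n)$ for $t\ge 1$ and $\chi_n\equiv 1$ on $[0,1]$. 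The transition now occurs on $[\sqrt n, n]$ with $|\chi_n'(t)|\le C(t\ln n)^{-1}$, giving
\[
I_n\le \frac{C}{(\ln n)^2}\int_{\sqrt n}^n (1+t^{2s})\,t^{-1-2s}\,dt \le \frac{C'}{\ln n}\to 0 .
\]
Combining the three vanishing contributions yields $u_n\to u$ in $W$, which is the desired density.
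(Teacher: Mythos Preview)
Your proof is correct and takes a genuinely different route from the paper. The paper uses the very linear cutoff $\eta_n(t)=\eta(t/n)$ that you first consider and discard; rather than forcing $I_n\to 0$, it only shows $\sup_n\|u_n\|_W<\infty$ (your computation $I_n\lesssim 1$ is essentially their estimate). Then, since $W$ is a Hilbert space (Proposition~\ref{prop61}), a subsequence of $(u_n)$ converges weakly to some $w\in W$, and the pointwise bound of Lemma~\ref{lem-61-1} forces $w=u$. Density follows because the weak and strong closures of the convex set $W_{1-s}(H,V)$ coincide.

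Your logarithmic cutoff is a pleasant device: it produces an explicit sequence converging \emph{strongly} in $W$ and bypasses the weak-compactness/Mazur step entirely, so the argument would survive in a merely Banach setting. The paper's approach is shorter and uses the simpler cutoff, but it leans on the Hilbert structure of $W$ that was just established.
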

\begin{proof}
Lemma \ref{lem-62-2} implies that $u\in W_{1-s}(H,V)$ for all $u\in W$ with compact support 
in $[0,\infty)$.
Let $u\in W$.
Let $\eta\in C_c^\infty[0,\infty)$ be such that $\one_{[0,1]}\le\eta\le \one_{[0,2]}$.
Let $n\in \NN$.
Define $\eta_n \in C_c^\infty[0,\infty)$ by $\eta_n(t):=\eta(\frac{t}{n})$.
Then $\one_{[0,n]}\le\eta_n\le \one_{[0,2n]}$.
Define $u_n:=\eta_nu \in W_{1-s}(H,V)$.
We shall show that $\sup_{n\in\NN}\|u_n\|_W<\infty$.
Obviously
\begin{align*}
\int_0^\infty\RRe\mathcal E(u_n(t)) t^{2(1-s)}\,\frac{dt}{t} \le\|u\|_W^2
\end{align*}
and
\begin{align*}
\int_0^\infty\|(\eta_nu')(t)\|_H^2 t^{2(1-s)}\,\frac{dt}{t} \le \|u\|_W^2.
\end{align*}
It remains to show that 
$\sup_{n \in \NN} \int_0^\infty\|(\eta_n'u)(t)\|_H^2 t^{2(1-s)}\,\frac{dt}{t} < \infty$.
Using \eqref{64-1} one estimates
\begin{eqnarray*}
\int_0^\infty\|(\eta_n'u)(t)\|_H^2 t^{2(1-s)}\,\frac{dt}{t}
& =&\frac{1}{n^2}\int_0^{2n}\Big(\eta'(\frac{t}{n})\Big)^2 \|u(t)\|_H^2 t^{1-2s}\,dt\\
& \le & \frac{\|\eta'\|_{\infty}^2 \|u\|_W^2}{n^2 s}
     \int_0^{2n} (1 + t^{2s}) t^{1-2s}\,dt\\
& =& \frac{\|\eta'\|_{\infty}^2 \|u\|_W^2}{n^2 s} 
    \Big( \frac{(2n)^{2-2s}}{2-2s} + 2n^2\Big)
.
\end{eqnarray*}
Hence $\sup_{n\in\NN}\|u_n\|_W<\infty$.

Passing to a subsequence if necessary, we have that there exists a $w \in W$ such that 
$\lim_{n\to\infty} u_n = w$ weakly in $W$.
Then \eqref{64-1} implies that $\lim_{n \to \infty} u_n(t) = w(t)$
in $H$ for almost all $t > 0$.
So $u = w \in W$.
We have shown that $u$ is in the weak closure of $W_{1-s}(H,V)$ in $W$.
Since $W_{1-s}(H,V)$ is convex, it is also in the strong closure.
Hence $W_{1-s}(H,V)$ is dense in $W$.
\end{proof}

Now we are able to show that the operator $B$ in Theorem~\ref{ts602} is 
a Dirichlet-to-Neumann map if $\mathcal E$ is $H$-elliptic.

\begin{theorem} \label{ts603}
Adopt the assumptions and notation as in Theorem~\ref{ts602}.
Moreover, assume that $\mathcal E$ is $H$-elliptic.
Define the form $\tilde \gotb \colon W \times W \to \CC$ by 
\[
\tilde \gotb(u,v)
= \int_0^\infty \Big( \langle u'(t),v'(t)\rangle_H
    +\mathcal E(u(t),v(t)) \Big) t^{2(1-s)}\,\frac{dt}{t}
.  \]
Let $x,y \in H$. 
Then the following assertions are equivalent.
\begin{tabeleq}
\item  \label{ts603-1}
$x\in D(A^s)$ and $c_sA^sx=y$.
\item  \label{ts603-2}
There exists a $u\in W$ such that $u(0)=x$ 
and $\tilde\gotb(u,v)=\langle y,v(0)\rangle_H$ for all $v\in W$.
\end{tabeleq}
\end{theorem}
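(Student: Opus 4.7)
My plan is to reduce the direct form characterisation on $W$ back to the sequential characterisation of Theorem~\ref{ts601} applied to the pair $(\gotb, j)$, and then invoke Theorem~\ref{ts602}, which already identifies the associated operator as $c_s A^s$. The bridge is provided by two facts: first, $W_{1-s}(H,V)$ is dense in $W$ (Lemma~\ref{lem-64-1}); second, $\tilde\gotb$ is a continuous sesquilinear form on $W\times W$. I would justify the second point using the sectoriality of $\mathcal E$, which yields the polarisation estimate $|\mathcal E(\xi,\eta)|\leq(1+\tan\theta)\sqrt{\RRe\mathcal E(\xi)}\sqrt{\RRe\mathcal E(\eta)}$; applied pointwise in $t$ and combined with Cauchy--Schwarz in $L^2((0,\infty);t^{2(1-s)}\,dt/t)$, this gives $|\tilde\gotb(u,v)|\le C\|u\|_W\|v\|_W$. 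I also record the two elementary facts $\|u\|_W^2 = \|u(0)\|_H^2 + \RRe\tilde\gotb(u)$ and $\|v(0)\|_H\leq \|v\|_W$, so the trace $v\mapsto v(0)$ is continuous from $W$ into~$H$.

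For \ref{ts603-2}~$\Rightarrow$~\ref{ts603-1}, I would start with $u\in W$ as given and use Lemma~\ref{lem-64-1} to pick $u_n\in W_{1-s}(H,V)$ with $u_n\to u$ in $W$. Then $u_n(0)\to x$ in $H$, the sequence $\RRe\gotb(u_n)=\RRe\tilde\gotb(u_n)$ is bounded because it converges to $\RRe\tilde\gotb(u)$, and for every $v\in W_{1-s}(H,V)\subset W$ one has $\gotb(u_n,v)=\tilde\gotb(u_n,v)\to\tilde\gotb(u,v)=\langle y,v(0)\rangle_H=\langle y,j(v)\rangle_H$. Conditions (i)--(iii) of Theorem~\ref{ts601} for the pair $(\gotb, j)$ are thus verified, and Theorem~\ref{ts602} gives $x\in D(c_sA^s)$ and $c_sA^sx = y$.

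For the converse \ref{ts603-1}~$\Rightarrow$~\ref{ts603-2} I would run this argument in reverse. By Theorems~\ref{ts602} and~\ref{ts601} there exists a sequence $(u_n)$ in $W_{1-s}(H,V)$ with $u_n(0)\to x$, $\sup_n\RRe\gotb(u_n)<\infty$, and $\gotb(u_n,v)\to\langle y,v(0)\rangle_H$ for every $v\in W_{1-s}(H,V)$. The identity $\|u_n\|_W^2=\|u_n(0)\|_H^2+\RRe\gotb(u_n)$ shows that $(u_n)$ is bounded in the Hilbert space $W$ (Proposition~\ref{prop61}); after extraction we may assume $u_n\rightharpoonup u$ weakly in $W$. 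Weak continuity of the trace gives $u(0)=x$, and for each fixed $v\in W$ the continuous linear functional $w\mapsto\tilde\gotb(w,v)$ on $W$ forces $\tilde\gotb(u_n,v)\to\tilde\gotb(u,v)$. Specialising to $v\in W_{1-s}(H,V)$ and comparing with the sequential limit yields $\tilde\gotb(u,v)=\langle y,v(0)\rangle_H$ on this dense subspace; continuity of both sides in $v\in W$ together with Lemma~\ref{lem-64-1} extends the identity to all of $W$.

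The principal technical hurdle I expect is exactly the continuity of $\tilde\gotb$ on $W\times W$: without sectoriality of $\mathcal E$, the term $\mathcal E(u(t),v(t))$ is not controlled by $\sqrt{\RRe\mathcal E(u(t))}\sqrt{\RRe\mathcal E(v(t))}$ and $\tilde\gotb$ could fail to be defined on the enlarged domain $W$. Once this polarisation estimate is in hand, everything else is a routine transfer from the coercive framework of Section~\ref{sec5} by sequential compactness and density.
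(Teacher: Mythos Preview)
Your argument is correct and follows the same strategy as the paper: both identify the operator $B$ of Theorem~\ref{ts602} with the operator associated to the extended pair $(\tilde\gotb,\tilde j)$ on the completion~$W$, using the density of $W_{1-s}(H,V)$ in~$W$ (Lemma~\ref{lem-64-1}), the completeness of~$W$ (Proposition~\ref{prop61}), and the continuity of~$\tilde\gotb$. The only difference is packaging: the paper invokes \cite[Proposition~3.3]{AtE} as a black box for the passage from the sequential description of Theorem~\ref{ts601} on the dense subspace to the direct Lax--Milgram description of Proposition~\ref{prop-52} on the completion, whereas you reconstruct that passage by hand via weak compactness and the sectoriality polarisation estimate---which is exactly what underlies the cited proposition.
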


\begin{proof}
Define $\tilde j \colon W \to H$ by $\tilde j(u) = u(0)$.
Then $\tilde \gotb$ is continuous and 
\[
\|u\|_W^2
\leq \RRe \tilde\gotb(u) + \|\tilde j(u)\|_H^2
\]
for all $u \in W$.
Moreover, $\tilde j$ is continuous and has dense image.
Obviously $\tilde \gotb$ and $\tilde j$ are extensions of $\gotb$ and~$j$, respectively.
In addition, $W$ is complete and $W_{1-s}(H,V)$ is dense in $W$ by Proposition~\ref{prop61}
and Lemma~\ref{lem-64-1}.
Hence by \cite[Proposition~3.3]{AtE} it follows that $B$ is the operator 
associated with $(\tilde \gotb,\tilde j)$ in the sense of Proposition~\ref{prop-52}.
Then the equivalence follows immediately from Theorem~\ref{ts602}
and the definition of the graph of $B$ in Proposition~\ref{prop-52}.
\end{proof}

Let $\tilde \gotb$ be as in Theorem~\ref{ts603}
and let $u \in W$ be as in Condition~\ref{ts603-2} in Theorem~\ref{ts603}.
Then $u\in W_{\loc}^{2,2}((0,\infty);V')$ and
\begin{align*}
u''(t)+\frac{1-2s}{t}u'(t)-\mathcal Au(t)=0
\mbox{ in } V' \mbox{ for a.e.\ } t \in (0,\infty),
\end{align*}
where $\mathcal A\colon V\to V'$ is given by 
$\langle \mathcal Aw,v\rangle_{V',V}=\mathcal E(w,v)$ for all $w,v\in V$.

\subsection*{Acknowledgements}
The first author is most grateful for a stimulating stay at the University of Puerto Rico, 
Rio Piedras Campus.
The second and third authors are most grateful for the hospitality extended to them 
during their fruitful stay at the University of Ulm.
The research of the third author is partially supported by the AFOSR Grant FA9550-15-1-0027.
Part of this work is supported by an
NZ-EU IRSES counterpart fund and the Marsden Fund Council from Government funding,
administered by the Royal Society of New Zealand.
Part of this work is supported by the
EU Marie Curie IRSES program, project `AOS', No.~318910.

\newpage

\end{document}